\numberwithin{equation}{section}
\theoremstyle{plain}
\newtheorem{theo}{Theorem}[section]
\newtheorem{prop}[theo]{Proposition}
\newtheorem{cond}[theo]{Condition}
\newtheorem{lemme}[theo]{Lemma}
\newtheorem{cor}[theo]{Corollary}
\theoremstyle{definition}
\newtheorem{remark}[theo]{Remark}
\newtheorem{defi}[theo]{Definition}
\tikzset
{marking1/.style=
	{decoration=
		{markings,
			mark= between positions 0.03 and 0.97 step 5 mm with {\arrow[line width=0.5pt]{>}}
		},
		postaction=decorate
	}
}
\def\E{\mathbb{E}}
\def\P{\mathbb{P}}
\def\R{\mathbb{R}}
\newcommand\Bc{{\mathscr B}}
\newcommand\Cs{{\mathcal C}}
\def\dd{\textnormal{d}}
\newcommand{\bindist}[2]{\textrm{Bin}({#1},{#2})}
\newcommand{\berno}[1]{\textrm{Bern}({#1})}
\newcommand{\Eb}  {{\mathbb E}}
\newcommand{\Nb}  {{\mathbb N}}
\newcommand{\Pb}  {{\mathbb P}}
\newcommand{\Rb}  {{\mathbb R}}
\newcommand{\Ls} {{\mathcal L}}
\newcommand{\Ms} {{\mathcal M}}
\newcommand*{\affmark}[1][*]{\textsuperscript{#1}}
\title[Ergodic processes with a Bernstein or a Siegmund dual]{Distance to stationarity and open set recurrence for ergodic processes on the unit interval with a Bernstein or a Siegmund dual}
\author{Fernando Cordero\affmark[1] \and Gr\'egoire V\'echambre\affmark[2]}
\address{\newline \affmark[1]BOKU University, Department of Natural Sciences and Sustainable Resources, Institute of Mathematics, Austria
\newline\affmark[2]State Key Laboratory of Mathematical Sciences, Academy of Mathematics and Systems Science, Chinese Academy of Sciences, China}
\email{\affmark[1]fernando.cordero@boku.ac.at, \affmark[2]vechambre@amss.ac.cn}
\date{\today}
\begin{document} 
\maketitle

\begin{abstract}
We use duality techniques - specifically Siegmund and Bernstein duality - as tools to analyse ergodic and recurrence properties of $[0,1]$-valued Markov processes. These dualities enable the derivation of sharp bounds on the distance to stationarity and allow for a novel approach to establishing topological recurrence, whereby ergodicity implies recurrent visits to neighbourhoods of points in the stationary distribution's support.
We first relate Siegmund and Bernstein duality in cases where both apply, such as $\Lambda$-Wright--Fisher processes. We then exploit the dual processes to derive bounds on the distance to stationarity and simple criteria for open set recurrence. Our results apply to a broad class of $\Lambda$-Wright--Fisher processes with frequency-dependent selection, mutation, and random environmental effects.
In many cases, Bernstein duals can be constructed from the same ancestral structures underlying moment duals, allowing our methods to extend and strengthen existing results based on moment duality. This framework provides a robust approach to studying the long-term behaviour of complex population models beyond the classical moment dual setting.
\end{abstract}
\textbf{Keywords:} Bernstein duality; Siegmund duality; distance to stationarity; open set recurrence; $\Lambda$-Wright--Fisher process; frequency-dependent selection; coordination; random environment

\bigskip

\section{Introduction}
The notion of duality for Markov processes has proven to be a powerful tool in a variety of fields, including interacting particle systems \cite{holey}, queueing theory \cite{shan,Green}, mathematical finance \cite{Lo17}, stochastic differential equations \cite{HSW77,willi78}, and population genetics \cite{EGT10,BW18,BCH18,casa19e}. Duality relates the one-dimensional distributions of two Markov processes, $X \coloneqq (X_t)_{t\geq 0}$ and $Y \coloneqq (Y_t)_{t\geq 0}$. For simplicity, we restrict our discussion to continuous-time processes with state spaces $E$ and $F$, respectively. In this setting, duality asserts that
\[
\Eb[H(X_t, y)\mid X_0 = x] = \Eb[H(x, Y_t)\mid Y_0 = y],\qquad t \geq 0,\ x \in E,\ y \in F,
\]
where $H \colon E \times F \to \Rb$ is a measurable function called the \emph{duality function}. Typically, the focus lies on one of the processes, say $X$, which is often more difficult to analyse. In contrast, the dual process $Y$ is typically more tractable and is used to infer properties of $X$. For instance, if $H$ contains sufficient information about the processes, one can deduce long-term behaviour of $X$ from that of $Y$. Duality has also been successfully used to establish existence and uniqueness of solutions to martingale problems.

This work is motivated by applications in population genetics, where the process $X$ describes the evolution of the proportion of individuals of a given type in a two-type population. Accordingly, we assume that $X$ takes values in the interval $[0,1]$. The state space of $Y$ depends heavily on the type of duality under consideration. For instance, in the case of a \emph{moment duality}, where the duality function is of the form $H(x, n) = x^n$, the state space of $Y$ is typically $\Nb$ (or $\Nb_0$, or $\Nb \cup \{\Delta\}$, with $\Delta$ a cemetery state). In population genetics, such a process $Y$ is often interpreted as an ancestral process. Moment dualities are valuable tools for analysing $[0,1]$-valued processes because positive integer moments uniquely characterize distributions on $[0,1]$ (see e.g. \cite{foucart2013impact,CF21,GS18,CorderoVechambre,casa19e,BEH21}). However, the existence of a moment dual imposes strong constraints on the process $X$ and is therefore more the exception than the rule.

To overcome this limitation, a new type of duality, referred to as \emph{Bernstein duality}, was introduced in \cite{Cordero2022} (see also \cite{Vec2023} for another generalization of moment duality). Bernstein duality was used to analyse $\Lambda$-Wright--Fisher processes with polynomial frequency-dependent selection, which lie beyond the scope of moment duality. This framework was further extended in \cite{cordhumvech2024} to include models with mutation and environmental randomness with opposing effects (see also \cite{Koske24} for broader forms of frequency-dependent selection). Moreover, Bernstein duality appears particularly promising in population genetics and seems extendable to more general settings, such as multi-type models.

Another important form of duality arises when the duality function is given by $H(x,y) = \mathds{1}_{\{x \leq y\}}$, with both $X$ and $Y$ valued in $[0,1]$. This is known as \emph{Siegmund duality} \cite{Siegmund1976}, which has been successfully applied in various contexts (e.g. \cite{Lo17,BLW16,CF21+,cordhumvech2022}). Unlike moment dualities, Siegmund duality holds under fairly general conditions. Specifically, it requires that the process $X$ is stochastically monotone with respect to its initial condition and satisfies some mild regularity assumptions (see e.g. \cite{Siegmund1976,Kolo11,Kolokoltsov2013}).

The notions of Siegmund and Bernstein duality are central to this work. We begin by showing that they are not entirely independent: when the process $X$ admits both a Siegmund and a Bernstein dual, we establish a simple connection between them. $\Lambda$-Wright--Fisher processes provide prototypical examples of such a setting (see \cite{cordhumvech2022,cordhumvech2024}). We then focus on $[0,1]$-valued ergodic Markov processes that admit either a Siegmund or a Bernstein dual. Our first application concerns convergence to stationarity. We provide quantitative bounds on the distance, in a suitable metric, between the distribution of $X_t$ and its stationary distribution. Thanks to duality, this typically difficult problem can be transformed into a simpler task involving the absorption properties of the dual process $Y$.

Our second application concerns recurrence properties of $X$. In contrast to the usual approach of deducing ergodicity from recurrence, we show that, in our setting, recurrence properties can be inferred from ergodicity. Specifically, under certain ergodicity assumptions, we prove that the support of the stationary distribution $\rho^X_\infty$ is topologically recurrent: any open neighbourhood of a point in the support is visited by the process $X$ at arbitrarily large times. This result provides a method for proving the existence of topologically recurrent points of $X$.

To illustrate our results, we apply them to a subclass of $\Lambda$-Wright--Fisher processes in population genetics. Specifically, we consider processes $X$ satisfying the stochastic differential equation
\begin{align}\label{eq:SDEWFP}
&\dd  X_t = \left( \sigma(X_t) X_t(1 - X_t) + \theta_a(1 - X_t) - \theta_A X_t \right)\dd t + \sqrt{\Lambda(\{0\}) X_t(1 - X_t)}\, \dd W_t \\
& + \int\limits_{(0,1] \times [0,1]} r \left( \mathds{1}_{\{u \leq X_{t-}\}}(1 - X_{t-}) - \mathds{1}_{\{u > X_{t-}\}} X_{t-} \right) \tilde{N}(\dd t, \dd r, \dd u) \nonumber \\
& + \int\limits_{(-1,1)} r X_{t-}(1 - X_{t-}) S(\dd t, \dd r) + \int\limits_{(-1,1)} |r| \left( \mathds{1}_{\{r \geq 0\}}(1 - X_{t-}) - \mathds{1}_{\{r < 0\}} X_{t-} \right) M(\dd t, \dd r), \nonumber
\end{align}
with initial condition $X_0 = x_0 \in [0,1]$. Here, $W \coloneqq (W_t)_{t \geq 0}$ is a standard Brownian motion, $\tilde{N}$ is a compensated Poisson measure on $[0,\infty) \times (0,1] \times [0,1]$ with intensity $\dd t \times r^{-2} \Lambda(\dd r) \times \dd u$, and $S$, $M$ are Poisson measures on $[0,\infty) \times (-1,1)$ with respective intensities $\dd t \times |r|^{-1} \mu(\dd r)$ and $\dd t \times |r|^{-1} \nu(\dd r)$. The processes $W$, $\tilde{N}$, $M$, and $S$ are assumed to be independent. The model is defined by the measures $\Lambda \in \Ms_f([0,1])$, $\mu, \nu \in \Ms_f([-1,1])$ (with $\mu(\{0\}) = \nu(\{0\}) = 0$), mutation rates $\theta = (\theta_a, \theta_A) \in \Rb_+^2$, and a frequency-dependent selection function $\sigma \in \Cs^1([0,1])$. The process $X$ models the proportion of individuals of type $a$ in a population subject to reproduction, selection, mutation, and environmental effects. For a detailed explanation of each term in \eqref{eq:SDEWFP}, we refer the reader to \cite{cordhumvech2024}.

It was shown in \cite{cordhumvech2024} that, under mutations and certain parameter conditions, the process $X$ is ergodic. In the absence of mutations, \cite{cordhumvech2022} identified parameter regimes in which $X$ is also ergodic. We apply our results to both cases to obtain bounds on the distance to stationarity and to derive simple, sharp conditions for open set recurrence of $[0,1]$, i.e., the property that starting from any point in $(0,1)$, the solution of \eqref{eq:SDEWFP} will visit any open subset of $[0,1]$ at arbitrarily large times.

Finally, we note that in many works on population genetics where $X$ is studied via moment duality (e.g. \cite{CorderoVechambre}), the moment dual is derived from an ancestral structure that can also be used to construct a Bernstein dual. Therefore, in such settings, Bernstein duality can be employed, and our results applied. This enables one to improve existing partial results on distance to stationarity and to investigate open set recurrence.

The paper is organized as follows. Section~\ref{S2} introduces the various notions of duality and states our main results. Section~\ref{bernsetinsection} contains the proofs related to Bernstein duality, including Theorems~\ref{thm:relduals}, \ref{propbounddtv}, \ref{thbounddiststa}, and Proposition~\ref{propcond}. Section~\ref{siegmundsection} presents the results concerning Siegmund duality: Theorems~\ref{genericsieg}, \ref{speedexpoofcvtostationarity}, and \ref{speedpolofcvtostationarity}. Section~\ref{sec:ergotoopensetrec} is independent from the others and addresses open set recurrence, proving Propositions~\ref{suppofmuinftyisrecunif}, \ref{suppofmuinftyisrec}, and Theorem~\ref{fullsupport}, which are then used in the aforementioned sections. Appendix~\ref{constructiondual} provides the construction of the Bernstein dual for solutions of \eqref{eq:SDEWFP}. Appendix~\ref{factslp} collects auxiliary results on L\'evy processes. Appendix~\ref{classicalfacts} reviews some standard metrics on probability measures.

\section{Preliminaries and main results }\label{S2}

In this section, we will review the concepts of Bernstein and Siegmund duality and present our key findings. Before doing so, let us introduce some notation.

\subsection{Notation} \label{notations}
For each $n,m\in\Nb$, we denote by $\Ls(\Rb^n,\Rb^m)$ the set of linear maps from $\Rb^n$ to $\Rb^m$. For any $C\in\Ls(\Rb^n,\Rb^m)$, we set $\lVert C\rVert_\infty\coloneqq\sup\{\lVert C w \rVert_\infty:w\in\Rb^n,\, \lVert w\rVert_\infty=1\}$.

For any $n\in\Nb$ and $p\in[0,1]$, we write $B\sim\bindist{n}{p}$ to indicate that $B$ is binomial random variable with parameters $n$ and $p$.

For any measure $\nu$ on $\mathbb{R}^d$, its support $Supp \ \nu$ is the set of points $x \in \mathbb{R}^d$ such that $\nu(\mathcal{O})>0$ for every open set $\mathcal{O}$ containing $x$. It is well-known that $(Supp \ \nu)^c$ is the largest open set with $\nu$-mass zero. In particular, every Borel set $A$ such that $\nu(A)>0$ contains at least one point of $Supp \ \nu$, otherwise it would be included in $(Supp \ \nu)^c$, which has $\nu$-mass zero. 

\subsection{Bernstein and Siegmund dualities}
We begin by revisiting the concept of Bernstein duality, which was introduced in \cite{Cordero2022}, and further generalized in \cite{Koske24,cordhumvech2024}.
\begin{defi}[Bernstein duality]\label{defberndual}
Let $X\coloneqq (X_t)_{t\geq 0}$ be a Markov process on $[0,1]$ and $V\coloneqq(V_t)_{t\geq 0}$ be a conservative Markov process on $\Rb^{\infty}\coloneqq\cup_{n\geq 0}\Rb^{n+1}$. We say that $X$ and $V$ are Bernstein duals if, for any $x\in[0,1]$, $v=(v_i)_{i=0}^n\in\R^{n+1}$ and~$t\geq0$,
\begin{align}
\E_x\left[\sum_{i=0}^n v_i \,\binom{n}{i} X_t^i(1-X_t)^{n-i}\right]=\E_v\left[\sum_{i=0}^{L_t} V_t(i)\,\binom{L_t}{i} x^i(1-x)^{L_t-i}\right], \label{bernsteinduality}
\end{align}
where $L_t\coloneqq \textrm{dim}(V_t)-1$. 
\end{defi}
We denote by $H:[0,1]\times \R^{\infty}$ the duality function which, according to the previous definition, is given by 
\begin{align}
	H(x,w)\coloneqq\sum_{i=0}^{\dim(w)-1}w_i \binom{\dim(w)-1}{i} x^i (1-x)^{\dim(w)-1-i}. \label{defh}
\end{align}
This allows us to rewrite the duality \eqref{bernsteinduality} as $\E_x\left[H(X_t,v)\right]=\E_v\left[H(x,V_t)\right]$.
Our results will not depend on the specific rates of the Bernstein dual, but it will heavily rely on the following assumption about its structure.
\begin{cond}\label{struct}
For any $n \geq 0$ and $v\in\R^{n+1}$, there is a process $(C_t)_{t \geq 0}$ of linear maps such that, $\Pb_v$-almost surely, for any $t\geq 0$, 
$$C_t \in \mathcal{L}(\R^{n+1},\R^{L_t+1}),\quad \lVert C_t \rVert_\infty \leq 1,\quad V_t = C_t v,$$
and, if $L_{t_0}=0$ for some $t_0\geq 0$, then $C_t=C_{t_0}$ for each $t\geq t_0$.
\end{cond}
Although the previous condition may not seem very intuitive, it is worth mentioning that it is satisfied in all previous works dealing with Bernstein duals \cite{Cordero2022, cordhumvech2024,Koske24}.  
Note that, under Condition \ref{struct}, \eqref{defh} yields, for any $v \in \Rb^{\infty}$, that $\P_v$-almost surely, for any $t\geq0$ and $x\in[0,1]$
\begin{align}
 \ |H(x,V_t)| \leq \lVert V_t \rVert_\infty = \lVert C_tv \rVert_\infty \leq ||v||_{\infty}. \label{normineq}
\end{align}

We now move to the notion of Siegmund duality.
\begin{defi}[Siegmund duality]Let $X\coloneqq (X_t)_{t\geq 0}$ and $Y\coloneqq(Y_t)_{t\geq 0}$ Markov processes on $[0,1]$. We say that $X$ and $Y$ are Siegmund duals if, for any $x,y\in[0,1]$, and~$t\geq0$,
\begin{align}
\P_x\left(X_t\geq y\right)=\P_y\left(x\geq Y_t\right). \label{siegmundduality}
\end{align}
\end{defi}
In his seminal work, Siegmund \cite{Siegmund1976} showed that a real-valued Markov process for which $\infty$ (when reachable) is an absorbing state admits a Siegmund dual if and only if the process is stochastically monotone and right-continuous, in the sense that the function $x \mapsto \Pb_x(X_t \geq y)$ is right-continuous for all $y \in [0,1]$ and $t \geq 0$. Since we focus on $[0,1]$-valued processes, no condition at $\infty$ is required, and stochastic monotonicity together with regularity with respect to the initial condition suffices. However, some of our results require a slightly stronger assumption than stochastic monotonicity; namely, we will need to assume the following condition is satisfied.
\begin{cond}\label{moncoup}
For any $x_1,x_2\in[0,1]$ with $x_1\leq x_2$ (resp. $x_1\geq x_2$) there is a $[0,1]^2$-valued process $(X_t^1,X_t^2)_{t\geq 0}$ with $X_t^i\sim\Pb_{x_i}$, $i\in\{1,2\}$, and $X_t^1\leq X_t^2$ (resp. $X_t^1\geq X_t^2$) for all $t\geq 0$ almost surely. We refer to $(X_t^1,X_t^2)_{t\geq 0}$ as a $(x_1,x_2)$-monotone coupling and denote its law by $\Pb_{x_1,x_2}$. Moreover, the map $(x_1,x_2)\mapsto \Pb_{x_1,x_2}(\cdot)$ is measurable. 
\end{cond}

The motivation behind Condition~\ref{moncoup} is that, when combined with Siegmund duality, it allows one to show - following the approach used in the proof of \cite[Cor.~2.6]{cordhumvech2022} - that for any $x_1, x_2, y \in [0,1]$ with $x_1 < x_2$, and for any $(x_1, x_2)$-monotone coupling $(X^1_t, X^2_t)_{t \geq 0}$, we have
\begin{eqnarray}
\mathbb{P}_{x_1,x_2} \left ( X^1_t < y \leq X^2_t \right ) = \mathbb{P}_{y} \left ( x_1 < Y_t \leq x_2 \right ). \label{doubleduality}
\end{eqnarray}
This identity will prove useful to study ergodicity of $X$ through its Siegmund dual $Y$. The existence of measurable monotone couplings for right-continuous, stochastically monotone processes on $\Rb$ holds under mild additional assumptions. For example, for processes with continuous paths, such a coupling can be constructed by starting with two independent copies of the process, running them until their first meeting time, and letting them evolve together thereafter. In the case of continuous-time Markov chains, existence follows from Theorem 1 in \cite{CliffSud85}. The existence of monotone couplings for processes defined as solutions to SDE's is typically obtained from standard comparison results (see e.g. Theorem 2.3 in \cite{DaLi12}); the measurability with respect to the initial pair $(x_1,x_2)$ requires in general additional work. Moreover, the recent work \cite{BeFr24} shows that the Feller property is sufficient for the existence of measurable monotone couplings in stochastically monotone processes. This result is particularly suitable for our applications to $\Lambda$-Wright--Fisher processes. 
\subsection{Connecting Bernstein and Siegmund dualities}

We now demonstrate that these two seemingly unrelated concepts are, in fact, closely connected. To establish this relationship, we begin by introducing some notation. For a fixed $x \in [0,1]$, let $(\beta_x(j))_{j \geq 1}$ be a sequence of random variables such that $\beta_x(j) \sim \bindist{j}{x}$ for each $j \geq 1$, and assume that these variables are mutually independent and also independent of the process $(V_t)_{t \geq 0}$. We continue to denote by $\P_v$ the probability measure on the space where both $(V_t)_{t \geq 0}$ and $(\beta_x(j))_{j \geq 1}$ are defined. 
The following result shows that, for a process admitting both a Bernstein dual and a Siegmund dual, the initial distributions of the duals can be coupled in such a way that the distribution function of the Siegmund dual at time $t$ is uniformly approximated by the expectation of certain randomized coefficients of the Bernstein dual at time $t$.

\begin{theo}\label{thm:relduals}
Let $X\coloneqq (X_t)_{t\geq 0}$ be a Markov process on $[0,1]$. Assume that $X$ admits a Bernstein dual $V$ and a Siegmund dual $Y$. Let $F:[0,1] \rightarrow [0,1]$ be a distribution function such that $F \in \mathcal{C}^1([0,1])$. We denote by $\mathbb{P}_F$ the law of the process $Y$ with initial distribution $Y_0 \sim F$. Define $v_{F,n}\coloneqq(F(i/n))_{i=0}^n\in\R^{n+1}$. For all $n\geq 1$ we have 
\begin{align}
\sup_{x \in [0,1], t \geq 0} \left | \mathbb{P}_F(Y_t \leq x) - \E_{v_{F,n}}[V_t(\beta_x(L_t))] \right | \leq \frac{||F'||_{\infty}}{2 \sqrt{n}}. \label{approxftbygt}
\end{align}
\end{theo}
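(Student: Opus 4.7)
The plan is to combine three ingredients: Siegmund duality to rewrite the left-hand side as an expectation of $F(X_t)$, a classical Bernstein polynomial approximation, and Bernstein duality to convert everything into an expression involving $V_t$. First I would use Siegmund duality to show $\Pb_F(Y_t \leq x) = \Eb_x[F(X_t)]$: conditioning on $Y_0\sim F$ and applying \eqref{siegmundduality},
\[\Pb_F(Y_t \leq x) = \int_{[0,1]} \Pb_y(Y_t \leq x)\, \mu_F(\dd y) = \int_{[0,1]} \Pb_x(X_t \geq y)\, \mu_F(\dd y),\]
where $\mu_F$ denotes the law of $Y_0$. By Fubini's theorem and the identity $\mu_F([0,z]) = F(z)$ for $z\in[0,1]$,
\[\Pb_F(Y_t \leq x) = \Eb_x\!\left[\int_{[0,1]} \mathds{1}_{\{y\leq X_t\}}\, \mu_F(\dd y)\right] = \Eb_x[F(X_t)].\]

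Next I would introduce the Bernstein polynomial $B_n[F](z) \coloneqq \sum_{i=0}^n F(i/n)\binom{n}{i}z^i(1-z)^{n-i}$, which by \eqref{defh} coincides with $H(z, v_{F,n})$. Applying Bernstein duality \eqref{bernsteinduality} at the initial value $v=v_{F,n}$ gives
\[\Eb_x[B_n[F](X_t)] = \Eb_x[H(X_t, v_{F,n})] = \Eb_{v_{F,n}}[H(x, V_t)].\]
Since $(\beta_x(j))_{j\geq 1}$ is independent of $V$ and $\beta_x(j) \sim \bindist{j}{x}$, conditioning on $V_t$ yields $\Eb[V_t(\beta_x(L_t))\mid V_t] = \sum_{i=0}^{L_t} V_t(i)\binom{L_t}{i}x^i(1-x)^{L_t-i} = H(x, V_t)$, and therefore
\[\Eb_{v_{F,n}}[V_t(\beta_x(L_t))] = \Eb_x[B_n[F](X_t)].\]

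It remains to estimate $\sup_{z\in[0,1]}|F(z) - B_n[F](z)|$. With $B\sim\bindist{n}{z}$, one has $B_n[F](z)=\Eb[F(B/n)]$ and $\Eb[(z-B/n)^2] = z(1-z)/n \leq 1/(4n)$. Since $F\in\mathcal{C}^1([0,1])$, Jensen's inequality gives
\[|F(z) - B_n[F](z)| \leq \|F'\|_\infty\, \Eb[|z-B/n|] \leq \|F'\|_\infty\, \sqrt{z(1-z)/n} \leq \frac{\|F'\|_\infty}{2\sqrt{n}},\]
uniformly in $z\in[0,1]$. Combining with the previous steps yields \eqref{approxftbygt} with uniformity in both $x$ and $t$. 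There is no substantial obstacle here---the argument is essentially a clean juxtaposition of the two dualities with a classical polynomial approximation. The only points requiring some care are the Fubini step linking Siegmund duality to $\Eb_x[F(X_t)]$ (which does not require $F(0)=0$), and the conditional-expectation identity that interprets $V_t(\beta_x(L_t))$ as a randomised evaluation of the polynomial $H(\cdot, V_t)$ at $x$.
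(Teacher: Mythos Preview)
Your proof is correct and follows essentially the same route as the paper: both use Siegmund duality to identify $\Pb_F(Y_t\leq x)$ with $\Eb_x[F(X_t)]$, Bernstein duality to rewrite $\Eb_x[B_n[F](X_t)]$ as $\Eb_{v_{F,n}}[H(x,V_t)]=\Eb_{v_{F,n}}[V_t(\beta_x(L_t))]$, and the Lipschitz Bernstein approximation bound $\|F-B_n[F]\|_\infty\leq \|F'\|_\infty/(2\sqrt{n})$ to conclude. The only cosmetic difference is that you spell out the Fubini argument and the conditioning-on-$V_t$ identity explicitly, whereas the paper packages these into the auxiliary equations \eqref{berndualbin}, \eqref{berndualfn}, \eqref{bernunifcvthlip} and \eqref{momxtofctrepy}.
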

Beyond its theoretical significance in connecting two distinct concepts of duality, Theorem \ref{thm:relduals} has a potential practical application: if a Markov process on $[0,1]$ possesses a Siegmund dual that, in turn, admits a Bernstein dual, then this Bernstein dual can be used to uniformly approximate the distribution function of the original process.
\subsection{Distance to stationarity for ergodic processes with a dual} \label{criteriaforergodicity}

In this section, we study the ergodic properties of a Markov process $X$ on $[0,1]$, which admits a Bernstein dual $V$ or a Siegmund dual $Y$. In this context, we derive bounds for the distance to stationarity, i.e. the distance (in a suitable metric) between the process's law at time $t$ and its stationary distribution. In some way, the two results in this section (Theorems \ref{propbounddtv} and \ref{genericsieg}) transform the difficult problem of studying the proximity between two coupled trajectories of the Markov process $X$ into the simpler problem of studying the absorption of a single trajectory of a dual process. To make this precise, we first introduce some notions of ergodicity.

\begin{defi}[Ergodicity notions]
Let $X$ be a Markov process on $[0,1]$ and denote by $\rho^X_{x,t}$ the law of $X_t$ under $\mathbb{P}_x$, i.e $\rho^X_{x,t}(\cdot)\coloneqq\mathbb{P}_x(X_t \in \cdot)$. We say that $X$ is \emph{ergodic} on $(0,1)$ if there is a stationary distribution $\rho^X_{\infty}$ on $[0,1]$ such that, for all $x \in (0,1)$, $\rho^X_{x,t}$ converges weakly to $\rho^X_{\infty}$ as $t \rightarrow \infty$. 

Let $d$ be metric on the set of probability measures on $[0,1]$. We say that $X$ is \emph{locally $d$-uniformly ergodic} on $(0,1)$ if there is a stationary distribution $\rho^X_{\infty}$ such that, for each $\epsilon\in(0,1/2)$, 
\begin{align*}
 \ \sup_{x \in [\epsilon, 1-\epsilon]} d(\rho_{x,t}^X,\rho^X_{\infty}) \underset{t \rightarrow \infty}{\longrightarrow} 0. 
\end{align*}
We say that $X$ is \emph{$d$-uniformly ergodic} on $[0,1]$ if there is a stationary distribution $\rho^X_{\infty}$ such that 
\begin{align*}
\sup_{x \in [0, 1]} d(\rho_{x,t}^X,\rho^X_{\infty}) \underset{t \rightarrow \infty}{\longrightarrow} 0. 
\end{align*}
\end{defi}

Clearly, $d$-uniform ergodicity on $[0,1]$ implies locally $d$-uniform ergodicity on $(0,1)$. Moreover, if $d$ is stronger than the L\'evy-Prokhorov metric $d_{LP}$, locally $d$-uniform ergodicity on $(0,1)$ implies ergodicity on $(0,1)$. Our results will involve the $W_p$-Wasserstein distances and the Radon distance $d_{Rad}$. The definitions and basic properties of these metrics are recalled in Appendix \ref{classicalfacts} (in particular, they are stronger than $d_{LP}$). 

Since our results will involve absorption properties of the dual process, we introduce appropriate notions of absorption.
\begin{defi}[Absorption notions]
We say that a process $(Z_t)_{t\geq 0}$ in $[0,1]$ is \textit{asymptotically absorbed} in $\{0,1\}$ if, for any $z \in [0,1]$, $\mathbb{P}_z$-almost surely the limit $Z_{\infty}\coloneqq\lim_{t \rightarrow \infty} Z_t$ exists and belongs to $\{0,1\}$. We say that such a process is \textit{regularly absorbed} in $\{0,1\}$ if 
\begin{align*}
\lim_{z \rightarrow 1} \mathbb{P}_z (Z_{\infty}=1)=1 \ \text{and} \ \lim_{z \rightarrow 0} \mathbb{P}_z (Z_{\infty}=0)=1.
\end{align*}
We say that it is \textit{uniformly absorbed} in $\{0,1\}$ if for any $\epsilon\in(0,1/2)$
\begin{align*}
 \ \sup_{z \in [0,1]} \mathbb{P}_z \left ( Z_t\in [\epsilon,1-\epsilon] \right ) \underset{t \rightarrow \infty}{\longrightarrow} 0. 
\end{align*}

\end{defi}

The next result establishes, under some conditions, the uniform ergodicity (for the distances $W_1$ and $d_{Rad}$) of $[0,1]$-valued processes admitting a Bernstein dual. 
\begin{theo}[Bernstein dual case] \label{propbounddtv}
Let $X\coloneqq (X_t)_{t\geq 0}$ be a Markov process in $[0,1]$. Assume that $X$ admits a Bernstein dual $V$ satisfying Condition \ref{struct}. Set $L_t\coloneqq \textrm{dim}(V_t)-1$, $t\geq 0$, and assume  that $(L_t)_{t\geq 0}$ is absorbed at $0$ in finite time almost surely. Then $X$ is $W_1$-uniformly ergodic on $[0,1]$ and, denoting by $\rho^X_{\infty}$ its stationary distribution, the following estimates hold. 
\begin{enumerate}
\item For any $t \geq 0$, 
\begin{align}
2 \sup_{x \in [0,1]} W_1 (\rho_{x,t}^X, \rho^X_{\infty}) \leq \sup_{x \in [0,1]} d_{Rad} (\rho_{x,t}^X, \rho^X_{\infty}) \leq 2 \left ( \sup_{n \geq 1} \mathbb{P}_n(L_t \neq 0) \right ). \label{eqboundradon}
\end{align}
\item For any $n\geq 1$ and $t \geq 0$, 
\begin{align}
\sup_{x \in [0,1]} W_1 (\rho_{x,t}^X, \rho^X_{\infty}) \leq \frac{1}{\sqrt{n}} + \mathbb{P}_n(L_t \neq 0). \label{eqboundwass}
\end{align}
\end{enumerate}
\end{theo}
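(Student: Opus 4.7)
The plan is to use Bernstein polynomials as test functions against the Bernstein duality identity, and then transfer the resulting estimates to $W_1$ and to $d_{Rad}$ via classical approximation. For any $f\in\mathcal{C}([0,1])$ set $v_{f,n}\coloneqq(f(i/n))_{i=0}^{n}$, so that the Bernstein polynomial $B_n(f)(x)\coloneqq\sum_{i=0}^{n}f(i/n)\binom{n}{i}x^{i}(1-x)^{n-i}$ coincides with $H(x,v_{f,n})$. Duality \eqref{bernsteinduality} then gives
\[
\E_{x}[B_n(f)(X_t)]-\E_{y}[B_n(f)(X_t)]=\E_{v_{f,n}}\bigl[H(x,V_t)-H(y,V_t)\bigr].
\]
The classical bound $\|B_n(f)-f\|_{\infty}\le L/(2\sqrt n)$ for $L$-Lipschitz $f$, obtained from $|B_n(f)(x)-f(x)|\le L\,\E|S_n/n-x|\le L\sqrt{x(1-x)/n}$ with $S_n\sim\bindist{n}{x}$, will serve as the bridge back from Bernstein polynomials to $f$.

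The main technical step is the following duality-driven contraction estimate: under Condition \ref{struct}, for any $f\in\mathcal{C}([0,1])$ with $\|f\|_{\infty}\le M$ and any $n\ge 1$, $t\ge 0$,
\[
\sup_{x,y\in[0,1]}\bigl|\E_{x}[B_n(f)(X_t)]-\E_{y}[B_n(f)(X_t)]\bigr|\le 2M\,\P_n(L_t\neq 0).\qquad(\star)
\]
Indeed, $V_t=C_t v_{f,n}$ with $\|V_t\|_{\infty}\le\|v_{f,n}\|_{\infty}\le M$: on $\{L_t=0\}$ the vector $V_t\in\R^{1}$ is scalar, so $H(\cdot,V_t)$ is constant in $x$; on $\{L_t\ge 1\}$, $H(z,V_t)$ is a convex combination of the entries of $V_t$, so $|H(x,V_t)-H(y,V_t)|\le 2\|V_t\|_{\infty}\le 2M$. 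To prove \eqref{eqboundwass}, I apply $(\star)$ with a $1$-Lipschitz $f$ shifted by the constant $(\max f+\min f)/2$, so that $\|f\|_{\infty}\le 1/2$ (such a shift does not change $\int f\,d(\mu-\nu)$ for probability measures), combine it with the approximation bound via the triangle inequality, and take the supremum over such $f$, which gives $W_1(\rho^X_{x,t},\rho^X_{y,t})\le 1/\sqrt n+\P_n(L_t\neq 0)$. For the right inequality in \eqref{eqboundradon}, I apply $(\star)$ with $M=1$ to any $f\in\mathcal{C}([0,1])$ with $\|f\|_{\infty}\le 1$ and let $n\to\infty$, using Weierstrass' theorem $B_n(f)\to f$ uniformly; the supremum over such $f$, together with the density of continuous functions in the definition of $d_{Rad}$, gives $d_{Rad}(\rho^X_{x,t},\rho^X_{y,t})\le 2\sup_n\P_n(L_t\neq 0)$. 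The left inequality $2W_1\le d_{Rad}$ in \eqref{eqboundradon} is an elementary fact on diameter-$1$ spaces.

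It remains to produce $\rho^X_{\infty}$ and convert the pairwise estimates into bounds against it. The absorption hypothesis yields $\P_n(L_t\neq 0)\to 0$ for each $n$ as $t\to\infty$, hence $\epsilon(t)\coloneqq\sup_{x,y}W_1(\rho^X_{x,t},\rho^X_{y,t})\to 0$. Using the Markov semigroup identity $\rho^X_{x_0,t+s}=\int\rho^X_{y',t}\,\rho^X_{x_0,s}(\dd y')$ and convexity of $W_1$, I get $W_1(\rho^X_{x_0,t},\rho^X_{x_0,t+s})\le\epsilon(t)$, so $(\rho^X_{x_0,t})_{t\ge 0}$ is $W_1$-Cauchy and converges to a probability $\rho^X_{\infty}$, with $\sup_x W_1(\rho^X_{x,t},\rho^X_{\infty})\to 0$. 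Stationarity of $\rho^X_{\infty}$ follows once we know $x\mapsto\E_x[f(X_t)]$ is continuous for $f\in\mathcal{C}([0,1])$, which is a direct consequence of duality, since $x\mapsto\E_x[H(X_t,v)]$ is continuous for every $v$ and polynomials are dense. The bounds \eqref{eqboundradon}--\eqref{eqboundwass} then follow from the stationarity identity $\rho^X_{\infty}=\int\rho^X_{y,t}\,\rho^X_{\infty}(\dd y)$ and the convexity of $W_1$ and $d_{Rad}$. The subtlest point is the contraction $(\star)$ itself, which crucially exploits Condition \ref{struct} together with the convex-combination structure of $H$ to kill the $x$-dependence on the absorption event $\{L_t=0\}$; the ergodicity extraction afterwards is more routine but relies on the Feller property recovered from the duality.
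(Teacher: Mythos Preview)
Your proof is correct and shares the same technical core as the paper's: the key observation that, under Condition \ref{struct}, on $\{L_t=0\}$ the dual $V_t$ becomes a scalar (so $H(\cdot,V_t)$ is constant), while on $\{L_t\neq 0\}$ one has $|H(x,V_t)|\le\|v_{f,n}\|_\infty\le\|f\|_\infty$. The Bernstein approximation $\|f-B_n(f)\|_\infty\le 1/(2\sqrt n)$ for $1$-Lipschitz $f$, together with the shift trick to get $\|f\|_\infty\le 1/2$, is used identically in both arguments.

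Where you differ is in the construction of $\rho^X_\infty$ and the passage from estimates to the final bounds. The paper first produces $\rho^X_\infty$ by moment convergence (duality with $v=e_k$ gives $\E_x[X_t^k]\to\E_{e_k}[U_\infty]$, independent of $x$) and then compares $\E_x[f_n(X_t)]$ directly to $\int f_n\,d\rho^X_\infty=\E_{v_{f,n}}[U_\infty]$; this is very short since moments determine measures on $[0,1]$. You instead derive \emph{pairwise} bounds $W_1(\rho^X_{x,t},\rho^X_{y,t})$ and $d_{Rad}(\rho^X_{x,t},\rho^X_{y,t})$, extract $\rho^X_\infty$ as a $W_1$-Cauchy limit via Chapman--Kolmogorov and convexity, establish the Feller property from duality to get stationarity, and finally convert the pairwise bounds into bounds against $\rho^X_\infty$ through $\rho^X_\infty=\int\rho^X_{y,t}\,\rho^X_\infty(\dd y)$ and convexity of $W_1$, $d_{Rad}$. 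Your route is slightly longer but more self-contained in that it does not invoke the moment problem on $[0,1]$; the paper's route is more direct but relies on that classical fact.
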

Thanks to \eqref{classicalLP}, we see that, under the assumptions in Theorem \ref{propbounddtv}, the process $X$ is also $d_{LP}$-uniformly ergodic on $[0,1]$. 

The next result establishes, under some conditions, the locally uniform ergodicity (for the distances $W_p$ with $p\geq 1$) of $[0,1]$-valued processes admitting a Siegmund dual. 
\begin{theo}[Siegmund dual case] \label{genericsieg}
Let $X\coloneqq (X_t)_{t\geq 0}$ be a Markov process in $[0,1]$ satisfying Condition \ref{moncoup}. In addition, assume that $X$ admits a Siegmund dual $Y$, which  is regularly and uniformly absorbed in $\{0,1\}$. Then, $X$ is locally $W_p$-uniformly ergodic on $(0,1)$ for all $p\geq 1$. Moreover, denoting by $\rho^X_{\infty}$ its stationary distribution, we have $\rho^X_{\infty}(\{0,1\})=0$ and, for any $\epsilon \in(0,1/2)$ and $t\geq 0$, 
\begin{align}
\sup_{x\in [\epsilon,1-\epsilon]} W_p(\rho^X_{x,t},\rho^X_{\infty})^p \leq \sup_{y \in [0,1]} \mathbb{P}_y \left ( Y_t\in [\epsilon,1-\epsilon] \right ) + \mathbb{P}_{\epsilon} (Y_{\infty}=1) + \mathbb{P}_{1-\epsilon} (Y_{\infty}=0). \label{decaywpsiegmund}
\end{align}
\end{theo}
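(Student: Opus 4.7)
The plan is to use the Siegmund dual first to construct $\rho^X_\infty$, and then to bound $W_p(\rho^X_{x,t}, \rho^X_\infty)^p$ by a monotone coupling against an initial condition drawn from $\rho^X_\infty$, converting the coupling error into a $Y$-quantity via the double duality \eqref{doubleduality}. For the first step I would set $G(y) \coloneqq \Pb_y(Y_\infty = 0)$ for $y \in [0,1]$. Siegmund duality \eqref{siegmundduality} and the a.s.\ absorption of $Y$ in $\{0,1\}$ yield, by bounded convergence (noting $Y_\infty \neq x$ a.s.\ for $x \in (0,1)$), the pointwise limit $\Pb_x(X_t \geq y) = \Pb_y(Y_t \leq x) \to G(y)$ as $t \to \infty$, independent of $x \in (0,1)$. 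The trivial monotonicity of $y \mapsto \Pb_x(X_t \geq y)$ transfers through \eqref{siegmundduality} to $G$, which is therefore non-increasing; regular absorption forces $G(0^+) = 1$ and $G(1^-) = 0$, hence (by monotonicity) $G(0) = 1$ and $G(1) = 0$. I define $\rho^X_\infty$ on $[0,1]$ via the tail $\rho^X_\infty((y, 1]) = G(y)$, which yields at once $\rho^X_\infty(\{0,1\}) = 0$ and weak convergence $\rho^X_{x,t} \to \rho^X_\infty$ for every $x \in (0,1)$; stationarity of $\rho^X_\infty$ then follows from the Markov property applied to the initial law $\rho^X_\infty$, which is concentrated in $(0,1)$ where the marginal convergence is already available, together with a dominated-convergence argument.

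For the key estimate, fix $\epsilon \in (0, 1/2)$ and $x \in [\epsilon, 1-\epsilon]$. Using the measurability of $(x_1, x_2) \mapsto \Pb_{x_1, x_2}$ from Condition~\ref{moncoup}, I build on one probability space a pair $(X_t, X_t^*)_{t \geq 0}$ with $X_0 = x$, $X_0^* \sim \rho^X_\infty$, such that, conditional on $X_0^* = x^*$, the two processes evolve under the $(x \wedge x^*, x \vee x^*)$-monotone coupling. Stationarity gives $X_t^* \sim \rho^X_\infty$, so
\begin{equation*}
W_p(\rho^X_{x,t}, \rho^X_\infty)^p \leq \Eb\!\left[|X_t - X_t^*|^p\right] \leq \Eb\!\left[|X_t - X_t^*|\right],
\end{equation*}
the last inequality using $|X_t - X_t^*| \leq 1$ and $p \geq 1$. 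On the event $\{x^* \in [\epsilon, 1-\epsilon]\}$ the monotone coupling combined with \eqref{doubleduality} gives
\begin{equation*}
\Eb\big[|X_t - X_t^*|\,\big|\,x^*\big] = \int_0^1 \Pb_y(x \wedge x^* < Y_t \leq x \vee x^*)\, dy \leq \sup_{y \in [0,1]} \Pb_y(Y_t \in [\epsilon, 1-\epsilon]),
\end{equation*}
since $(x \wedge x^*, x \vee x^*] \subseteq [\epsilon, 1-\epsilon]$. On the complementary event I simply use $|X_t - X_t^*| \leq 1$; by construction of $\rho^X_\infty$ together with $Y_\infty \in \{0,1\}$, $\rho^X_\infty([0, \epsilon]) = 1 - G(\epsilon) = \Pb_\epsilon(Y_\infty = 1)$ and $\rho^X_\infty((1-\epsilon, 1]) = G(1-\epsilon) = \Pb_{1-\epsilon}(Y_\infty = 0)$, whence $\Pb(x^* \notin [\epsilon, 1-\epsilon]) \leq \Pb_\epsilon(Y_\infty = 1) + \Pb_{1-\epsilon}(Y_\infty = 0)$. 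Summing the two contributions yields exactly \eqref{decaywpsiegmund}, and local $W_p$-uniform ergodicity on $(0,1)$ follows by first letting $t \to \infty$ (uniform absorption kills the first term) and then $\epsilon \to 0$ (regular absorption kills the remaining two).

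The delicate step is the measurable construction of the joint process $(X_t, X_t^*)$: one must glue the pairwise monotone couplings supplied by Condition~\ref{moncoup} along the random initial value $x^* \sim \rho^X_\infty$, which is exactly what the measurability assumption on $(x_1, x_2) \mapsto \Pb_{x_1, x_2}$ is there for, via regular conditional distributions. A secondary but routine subtlety is the passage from pointwise convergence of the marginals to stationarity of $\rho^X_\infty$, which is rendered transparent by $\rho^X_\infty(\{0,1\}) = 0$. Everything else is an essentially mechanical application of \eqref{doubleduality}.
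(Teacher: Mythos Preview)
Your proof is correct and follows essentially the same strategy as the paper: construct $\rho^X_\infty$ from the Siegmund duality via $G(y)=\Pb_y(Y_\infty=0)$, couple $X$ started at $x$ with $X^*$ started from $\rho^X_\infty$ through the monotone coupling of Condition~\ref{moncoup}, and convert the resulting $\Eb[|X_t-X_t^*|]$ into a $Y$-quantity by the double duality \eqref{doubleduality}, splitting according to whether $x^*\in[\epsilon,1-\epsilon]$. The only cosmetic difference is that the paper bounds $|X_t^2-X_t^1|^p\le (X_t^2)^p-(X_t^1)^p=\int_0^1 pu^{p-1}\mathds{1}_{\{X_t^1<u\le X_t^2\}}\,\dd u$ before applying \eqref{doubleduality}, whereas you use the simpler $|X_t-X_t^*|^p\le |X_t-X_t^*|=\int_0^1\mathds{1}_{\{X_t\wedge X_t^*<u\le X_t\vee X_t^*\}}\,\dd u$; both routes land on the same right-hand side.
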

As before, thanks to \eqref{classicalLP}, we see that, under the assumptions in Theorem \ref{genericsieg}, the process $X$ is also locally $d_{LP}$-uniformly ergodic on $(0,1)$. 

\subsection{Open set recurrence}\label{sec:opensetrec}

When a process is ergodic, it is also of interest to determine whether it is open-set recurrent. Often, the ergodicity properties of a Markov process can be deduced from its recurrence properties. However, in our case, our duality-based methodology provides more insight into the ergodicity properties of the process than its recurrence properties. We therefore present results that enable the latter to be deduced from the former. We start introducing the notions of recurrence that we use.
\begin{defi}[topological recurrence]
We said that a point $x_0\in[0,1]$ is \textit{topologically recurrent} for $X$ if any of the open neighbourhoods of $x_0$ is visited by $X$ at arbitrary large times almost surely. The set of topologically recurrent points is then 
\begin{align*}
\mathcal{R}(X)\coloneqq \big \{ x \in [0,1]: \ \forall z \in (0,1), \forall \eta>0, \mathbb{P}_z \big ( \limsup_{t \rightarrow \infty} \mathds{1}_{\{X_t \in (x-\eta,x+\eta)\}}=1 \big ) = 1 \big \}. 
\end{align*}
Note that $\mathcal{R}(X)$ is a closed set. We say that $X$ is \textit{open set recurrent} if $\mathcal{R}(X)=[0,1]$. In other words, $X$ is open set recurrent if, for every starting point $z \in (0,1)$, each open set is $\P_z$-almost surely visited by $X$ at arbitrary large times.
\end{defi}

 The main results of this section establish conditions, expressed in terms of the ergodicity properties of $X$, which imply that any point in $Supp \ \rho^X_{\infty}$ is topologically recurrent.
\begin{prop}[uniform ergodicity condition] \label{suppofmuinftyisrecunif}
Let $X$ be a Markov process on $[0,1]$. Assume that $X$ is $d_{LP}$-uniformly ergodic on $[0,1]$ and denote its stationary distribution by $\rho^X_{\infty}$. Then $Supp \ \rho^X_{\infty} \subset \mathcal{R}(X)$. 
\end{prop}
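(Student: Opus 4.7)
The plan is to fix $x_0\in\operatorname{Supp}\rho^X_\infty$, $z\in(0,1)$ and $\eta>0$, and to use $d_{LP}$-uniform ergodicity to show that for all sufficiently large times $t$ the probability $\mathbb{P}_x(X_t\in(x_0-\eta,x_0+\eta))$ is bounded below by a positive constant uniformly in $x\in[0,1]$. This uniform lower bound will then be combined with the Markov property and a conditional Borel--Cantelli argument to conclude that $(x_0-\eta,x_0+\eta)$ is visited at arbitrarily large times $\mathbb{P}_z$-almost surely.

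More precisely, since $x_0\in\operatorname{Supp}\rho^X_\infty$ we have $\delta\coloneqq\rho^X_\infty((x_0-\eta,x_0+\eta))>0$. By monotone convergence along the open sets $(x_0-\eta+\varepsilon,x_0+\eta-\varepsilon)$, I can choose $\varepsilon\in(0,\eta)$ small enough that $\rho^X_\infty((x_0-\eta+\varepsilon,x_0+\eta-\varepsilon))>\delta/2$. By $d_{LP}$-uniform ergodicity there exists $T>0$ such that $d_{LP}(\rho^X_{x,t},\rho^X_\infty)<\varepsilon\wedge(\delta/4)$ for all $x\in[0,1]$ and all $t\geq T$. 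Unfolding the definition of the L\'evy--Prokhorov metric applied to the closed set $[x_0-\eta+\varepsilon,x_0+\eta-\varepsilon]$ (whose $\varepsilon$-enlargement is contained in $(x_0-\eta,x_0+\eta)$), I obtain
\begin{equation*}
\mathbb{P}_x\bigl(X_t\in(x_0-\eta,x_0+\eta)\bigr)\;\geq\;\rho^X_\infty\bigl((x_0-\eta+\varepsilon,x_0+\eta-\varepsilon)\bigr)-\varepsilon\wedge(\delta/4)\;\geq\;\delta/4
\end{equation*}
for all $x\in[0,1]$ and all $t\geq T$.

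Now I fix an increasing sequence of times $t_n\coloneqq T+n$, the natural filtration $\mathcal{F}_n\coloneqq\sigma(X_s:s\leq t_n)$ and the events $A_n\coloneqq\{X_{t_n}\in(x_0-\eta,x_0+\eta)\}$. By the Markov property and the uniform lower bound just established (applied at time $t_n-t_{n-1}=1\leq T$... careful: the lower bound is only valid for $t\geq T$, so I should instead take $t_n\coloneqq nT$ so that $t_n-t_{n-1}=T$), we get $\mathbb{P}_z(A_n\mid\mathcal{F}_{n-1})=\mathbb{P}_{X_{t_{n-1}}}(X_{T}\in(x_0-\eta,x_0+\eta))\geq\delta/4$ almost surely, hence $\sum_n\mathbb{P}_z(A_n\mid\mathcal{F}_{n-1})=\infty$ $\mathbb{P}_z$-a.s. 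The conditional Borel--Cantelli lemma (L\'evy's extension) then gives $\mathbb{P}_z(\limsup_n A_n)=1$, which implies $\mathbb{P}_z(\limsup_{t\to\infty}\mathds{1}_{\{X_t\in(x_0-\eta,x_0+\eta)\}}=1)=1$, as desired.

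No step looks genuinely difficult; the only subtle points are (i) choosing $\varepsilon$ so that the enlargements stay inside $(x_0-\eta,x_0+\eta)$ and the measure of the shrunk set still dominates $\delta/2$, and (ii) choosing the discrete time grid so that $t_n-t_{n-1}$ is at least the threshold $T$ beyond which the uniform lower bound holds. Once these are set up, the proof is a direct combination of Portmanteau-type considerations for $d_{LP}$, the Markov property, and conditional Borel--Cantelli.
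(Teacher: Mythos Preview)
Your proof is correct and follows essentially the same approach as the paper: both establish a uniform-in-$x$ positive lower bound on $\mathbb{P}_x(X_{t_0}\in(x_0-\eta,x_0+\eta))$ from the definition of $d_{LP}$, and then conclude recurrence of the discrete-time skeleton $(X_{nt_0})_{n\geq 0}$. The only difference is that the paper invokes \cite[Thm.~9.1.3]{bookMT1993} on uniform accessibility to conclude, whereas you spell out the conditional Borel--Cantelli argument directly; your version is slightly more self-contained but otherwise equivalent. (Your mid-proof self-correction on the grid spacing is the right fix; for a clean write-up just take $t_n=nT$ from the outset.)
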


\begin{prop}[local uniform ergodicity condition] \label{suppofmuinftyisrec}
Let $X$ be a Markov process on $[0,1]$. Assume that $X$ is locally $d_{LP}$-uniformly ergodic on $(0,1)$ and denote its stationary distribution by $\rho^X_{\infty}$. We also assume that $\rho^X_{\infty}(\{0,1\})=0$. Then $Supp \ \rho^X_{\infty} \subset \mathcal{R}(X)$. 
\end{prop}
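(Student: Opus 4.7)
The plan is to reduce the claim to showing, for a suitable continuity neighbourhood $U$ of $x_0$, that $\mathbb{P}_z(X_n \in U \text{ for arbitrarily large integers } n) = 1$, and to obtain this via L\'evy's $0$-$1$ law applied to a shift-invariant tail event.

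First, fix $x_0 \in Supp \ \rho^X_\infty$, $z \in (0,1)$, and $\eta > 0$. Since $\rho^X_\infty$ has at most countably many atoms, one can shrink $\eta$ (which only strengthens what has to be proved) so that $\rho^X_\infty(\{x_0 - \eta, x_0 + \eta\}) = 0$. Set $U \coloneqq (x_0 - \eta, x_0 + \eta) \cap [0,1]$; then $U$ is a continuity set of $\rho^X_\infty$ and $\alpha \coloneqq \rho^X_\infty(U) > 0$ because $x_0 \in Supp \ \rho^X_\infty$. It then suffices to prove $\mathbb{P}_z(A) = 1$ for $A \coloneqq \{X_n \in U \text{ for arbitrarily large } n \in \mathbb{N}\}$.

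Since $A$ is a shift-invariant tail event, the Markov property gives $\mathbb{E}_z[\mathds{1}_A \mid \mathcal{F}_n] = p(X_n)$, where $p(y) \coloneqq \mathbb{P}_y(A)$, and L\'evy's $0$-$1$ law yields $p(X_n) \to \mathds{1}_A$, $\mathbb{P}_z$-a.s. Next I would establish two pointwise facts. For any $y \in (0,1)$, local $d_{LP}$-uniform ergodicity on $(0,1)$ entails weak convergence $\rho^X_{y,n} \to \rho^X_\infty$; the Portmanteau theorem applied to the continuity set $U$ together with reverse Fatou's lemma then gives $p(y) \geq \limsup_n \mathbb{P}_y(X_n \in U) = \alpha$. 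On the other hand, Portmanteau applied to the closed set $\{0,1\}$ combined with $\rho^X_\infty(\{0,1\}) = 0$ shows $\mathbb{P}_z(X_n \in (0,1)) \to 1$, and another use of reverse Fatou yields $\mathbb{P}_z(X_n \in (0,1) \text{ i.o.}) = 1$.

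Combining these, $\mathbb{P}_z$-almost surely there is a random subsequence $n_k \to \infty$ with $X_{n_k} \in (0,1)$, hence $p(X_{n_k}) \geq \alpha$; jointly with $p(X_n) \to \mathds{1}_A$ this forces $\mathds{1}_A \geq \alpha > 0$, so $\mathds{1}_A = 1$ almost surely, i.e.\ $\mathbb{P}_z(A) = 1$. The continuous-time conclusion is then immediate, since an integer subsequence with $X_n \in U$ provides real times at which $X_t \in (x_0-\eta, x_0+\eta)$. The one subtle step, in my view, is the initial shrinking of $\eta$ to ensure $\rho^X_\infty(\partial U) = 0$; without this, Portmanteau would only deliver $\limsup_n \mathbb{P}_y(X_n \in U) \leq \alpha$, and the crucial pointwise lower bound $p(y) \geq \alpha$ would be lost. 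Everything else amounts to a clean exploitation of the tail-martingale $p(X_n)$ together with the hypothesis that $\rho^X_\infty$ puts no mass on $\{0,1\}$.
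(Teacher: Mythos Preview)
Your proof is correct and takes a genuinely different route from the paper's. The paper first proves a preliminary lemma showing that for any $t_0>0$ the skeleton $(X_{nt_0})_{n\geq 0}$ visits $[\epsilon,1-\epsilon]$ infinitely often with probability tending to $1$ as $\epsilon\to 0$; it then uses the \emph{uniform} part of local $d_{LP}$-uniform ergodicity to show that $(x-\eta,x+\eta)$ is uniformly accessible from $[\epsilon,1-\epsilon]$ in the sense of Meyn--Tweedie, and invokes \cite[Thm.~9.1.3]{bookMT1993} to conclude. Your argument bypasses both the auxiliary lemma and the Meyn--Tweedie reference: you work directly with the tail event $A$, exploit shift-invariance and the Markov property to identify $\Eb_z[\mathds{1}_A\mid\Fs_n]$ with $p(X_n)$, and let L\'evy's $0$--$1$ law together with the pointwise lower bound $p(y)\geq\alpha$ on $(0,1)$ do the work. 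This is more self-contained and, notably, uses only pointwise weak convergence $\rho^X_{y,t}\to\rho^X_\infty$ for each fixed $y\in(0,1)$ plus $\rho^X_\infty(\{0,1\})=0$; the local uniformity in the hypothesis is never invoked, so your argument in fact establishes the conclusion under the weaker assumption that $X$ is merely ergodic on $(0,1)$. The paper's approach, by contrast, genuinely needs the uniformity to feed into the Meyn--Tweedie accessibility criterion, but has the advantage of building on a general recurrence framework that may be more familiar in the Markov chain literature.
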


Propositions \ref{suppofmuinftyisrecunif} and \ref{suppofmuinftyisrec} provide conditions for a Markov process $X$ to admit at least one topologically recurrent point. The following result completes them by providing a simple and sharp condition for a $\Lambda$-Wright--Fisher process of the form given by \eqref{eq:SDEWFP}, having at least one topologically recurrent point in $(0,1)$, to be open set recurrent.

\begin{theo} \label{fullsupport}
Let $X$ be the solution of \eqref{eq:SDEWFP}. Assume that $\Lambda([0,1/2+\epsilon])>0$ for every $\epsilon>0$ and $\mathcal{R}(X) \cap (0,1) \neq \emptyset$. Then $\mathcal{R}(X)=[0,1]$, and in particular $X$ is open set recurrent. 
\end{theo}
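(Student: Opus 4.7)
The plan is to spread the topological recurrence from the single point $x^* \in \mathcal{R}(X)\cap(0,1)$ to all of $[0,1]$, combining three ingredients: (i) a deterministic controllability lemma for the reproductive-jump maps, exploiting $\Lambda([0,1/2+\epsilon])>0$; (ii) a support-theorem-type positive-probability statement for the jump SDE \eqref{eq:SDEWFP}; (iii) a conditional Borel--Cantelli argument driven by the recurrence of $x^*$.

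For ingredient (i), the hypothesis and closedness of $\mathrm{supp}\,\Lambda$ guarantee that, for every $\eta>0$, there is some $r_* \in \mathrm{supp}\,\Lambda$ with $r_* \leq 1/2 + \eta/4$. Write $\psi^+_r(x):=x+r(1-x)$ and $\psi^-_r(x):=x(1-r)$ for the effects of an up- or down-reproductive jump of size $r$, and let $A_{r_*}$ denote the attractor of the iterated function system $\{\psi^+_{r_*},\psi^-_{r_*}\}$ on $[0,1]$. Since $\psi^+_{r_*}([0,1])\cup\psi^-_{r_*}([0,1]) = [r_*,1]\cup[0,1-r_*]$ has at most one gap, of width $(2r_*-1)_+\leq\eta/2$, iteration of the contractions shows that $A_{r_*}$ is $(\eta/2)$-dense in $[0,1]$, and the orbit of any $x\in[0,1]$ under arbitrary compositions of $\psi^\pm_{r_*}$ is dense in $A_{r_*}$. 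Hence for any $x\in[0,1]$ and $y\in[0,1]$ there exist $k\geq 1$ and signs $\epsilon_1,\ldots,\epsilon_k\in\{+,-\}$ with $|\psi^{\epsilon_k}_{r_*}\circ\cdots\circ\psi^{\epsilon_1}_{r_*}(x)-y|<\eta$. By joint continuity of $(x,r)\mapsto\psi^\pm_r(x)$, the same bound persists if $x$ is perturbed in a small open neighbourhood $U_{x^*}$ of $x^*$ and each $r_j$ in a small neighbourhood of $r_*$; these neighbourhoods carry positive $\Lambda$-mass since $r_*\in\mathrm{supp}\,\Lambda$.

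For ingredient (ii), I would show that for any $y\in[0,1]$ and $\eta>0$ there exist $T,p>0$ and an open neighbourhood $U_{x^*}$ of $x^*$ with $\inf_{x'\in U_{x^*}}\mathbb{P}_{x'}(X_T\in(y-\eta,y+\eta))\geq p$. The idea is to intersect four positive-probability events on $[0,T]$: the measure $\tilde{N}$ fires exactly $k$ times with marks $(r_j,u_j)$ in prescribed small windows (the $u_j$'s selecting the signs $\epsilon_j$ via $\mathds{1}_{\{u\leq X_{t-}\}}$); the measures $S$ and $M$ produce no large jumps on $[0,T]$; and the continuous terms (drift, Brownian motion, compensator) together with the small jumps of $\tilde{N},S,M$ displace $X$ by at most $\eta/(4k)$ between the prescribed reproductive jumps. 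Each event has positive probability — using $\Lambda$-positivity near $r_*$, finiteness of $\mu,\nu$ restricted away from $0$, and standard $L^2$/small-ball estimates for compensated L\'evy integrals and Brownian motion — and their joint probability is bounded below by conditioning on the jump times of $\tilde{N}$ and invoking independence of the driving noises; continuity of $\psi^\pm_r$ in $x$ yields uniformity in $x'\in U_{x^*}$.

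For ingredient (iii), fix $z\in(0,1)$, $y\in[0,1]$ and $\eta>0$. Since $x^*\in\mathcal{R}(X)$, $\mathbb{P}_z$-almost surely $X$ visits $U_{x^*}$ at arbitrarily large times, so one can inductively build stopping times $\tau_n\uparrow\infty$ with $\tau_{n+1}\geq\tau_n+T$ and $X_{\tau_n}\in U_{x^*}$. The strong Markov property combined with (ii) yields $\mathbb{P}_z(X_{\tau_n+T}\in(y-\eta,y+\eta)\mid\mathcal{F}_{\tau_n})\geq p$ for every $n$; the conditional Borel--Cantelli lemma then implies that $X_{\tau_n+T}\in(y-\eta,y+\eta)$ for infinitely many $n$ almost surely, so $y\in\mathcal{R}(X)$. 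As $y\in[0,1]$ and $\eta>0$ are arbitrary, $\mathcal{R}(X)=[0,1]$. The main obstacle is (ii), essentially a support theorem for the jump SDE \eqref{eq:SDEWFP}: the delicate technical points are handling the possibly infinite-activity small jumps of $M$ and $S$, and obtaining a lower bound uniform in $x'\in U_{x^*}$.
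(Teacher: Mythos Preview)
Your three-ingredient strategy is essentially the same as the paper's Lemma~\ref{fullsupportjump}: force a prescribed sequence of reproductive jumps of size close to some $r_*\in\mathrm{supp}\,\Lambda\cap(0,1/2]$, control all other terms to be small, then iterate via the recurrence of $x^*$ and a Borel--Cantelli/strong-Markov argument. The paper carries this out one jump at a time (showing $x\in\mathcal{R}(X)\Rightarrow\psi^\pm_{r_*}(x)\in\mathcal{R}(X)$ and then iterating using closedness of $\mathcal{R}(X)$), whereas you package all $k$ jumps into a single support-theorem estimate; either works.

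However, there is a genuine gap. The hypothesis $\Lambda([0,1/2+\epsilon])>0$ for all $\epsilon>0$ can be satisfied entirely by the atom at $0$: take for instance $\Lambda=\delta_0$, or $\Lambda=\delta_0+\mathds{1}_{(3/4,1)}(r)\,\dd r$. In such cases $\mathrm{supp}\,\Lambda\cap(0,1/2+\eta/4]=\emptyset$ for small $\eta$, so the only available $r_*$ is $0$, and then $\psi^\pm_0=\mathrm{id}$ while $\tilde N$ has \emph{no} atoms with $r$ near $0$ (the jump measure lives on $(0,1]$; the mass $\Lambda(\{0\})$ feeds only the Brownian term). Your ingredients (i) and (ii) collapse: the IFS is trivial and the event ``$\tilde N$ fires $k$ times with $r_j$ near $r_*$'' has probability zero. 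The paper handles this case by a completely different argument (its Lemma~\ref{fullsupportdiff}): it applies the transformation $f(v)=\pi/2+\arcsin(2v-1)$ so that the diffusion coefficient of $f(X)$ becomes the constant $\sqrt{\Lambda(\{0\})}$, and then uses the Brownian support theorem to steer $f(X)$ from $f(x^*)$ to $f(y)$ while controlling all jump and drift contributions. You need a separate treatment of the case $\Lambda(\{0\})>0$ along these lines; your current outline only covers the case where $\mathrm{supp}\,\Lambda$ meets $(0,1/2]$.

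A minor remark: your worry about infinite activity of $M$ and $S$ is unnecessary, since $\mu,\nu\in\mathcal{M}_f([-1,1])$ and the relevant integrals $\int_{[0,\delta]\times(-1,1)}|r|\,S(\dd s,\dd r)$, $\int_{[0,\delta]\times(-1,1)}|r|\,M(\dd s,\dd r)$ have finite expectation $\delta\mu(-1,1)$, $\delta\nu(-1,1)$; the only genuinely delicate small-jump control is for $\tilde N$, which you correctly flag.
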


Typical choices of $\Lambda$ in mathematical population genetics are $\Lambda(\dd x)=\delta_0(\dd x)$ and $\Lambda(\dd x)=cx^{a-1}(1-x)^{b-1}\dd x$, with $a,b,c>0$. In all these cases, the condition imposed on $\Lambda$ in Theorem \ref{fullsupport} is satisfied. Let us also mention that the proof of Theorem \ref{fullsupport} treats separately the case "$\Lambda(\{0\})>0$" (i.e. when $X$ has a diffusion component) from the case "$\Lambda(\{0\})=0$ and $\Lambda((0,1/2+\epsilon])>0$ for every $\epsilon>0$", with each case requiring a different technique.

\begin{remark} \label{remarkoptimality}
The "$1/2 +\epsilon$", in the condition on the measure $\Lambda$ in Theorem \ref{fullsupport}, is optimal in the sense that, for any $\Lambda\in\Ms_f([0,1])$ that does not satisfy the condition, we can choose the remaining parameters defining $X$ such that $\mathcal{R}(X) \cap (0,1) \neq \emptyset$ and $1/2 \notin \mathcal{R}(X)$. These counter-examples are provided in Section \ref{optimalityofcondition}. 
\end{remark}

A combination of the results presented in this section with those from Section \ref{criteriaforergodicity} will allow us to identify a class of $\Lambda$-Wright--Fisher processes that are open set recurrent.
\subsection{Applications to $\Lambda$-Wright--Fisher processes}\label{sec:applications}
In this section we will apply the results of Sections \ref{criteriaforergodicity} and \ref{sec:opensetrec} to two classes of ergodic $\Lambda$-Wright--Fisher processes of the form given by \eqref{eq:SDEWFP}. The first class consists of $\Lambda$-Wright--Fisher processes subject to bidirectional mutations, whose parameters satisfy a condition that, according to the results in \cite{cordhumvech2024}, guarantees its ergodicity. The second class comprises $\Lambda$-Wright--Fisher processes without mutation,  with moderate neutral reproduction and frequency-dependent selection with a sufficiently strong repelling effect at the boundaries, rendering the process ergodic. The precise condition on the parameters stems from \cite{cordhumvech2022}.

Before we dive into the analysis of these two classes of processes, let us recall that the existence and pathwise uniqueness of strong solutions of SDE \eqref{eq:SDEWFP} can be shown analogously to \cite[Lemma 3.2]{Cordero2022} (see also \cite[Prop. A.3]{cordhumvech2022}).

\subsubsection{$\Lambda$-Wright--Fisher processes with mutation}\label{sec:model}
In this section we consider the first of the two aforementioned classes of ergodic $\Lambda$-Wright--Fisher processes. The first assumption is that the mutation parameters satisfy
\begin{align}
	\theta_a+\nu(0,1)>0 \text{ and } \theta_A+\nu(-1,0)>0, \label{eq:bimut}
\end{align}
which biologically means that the process $X$ is subject to bidirectional mutations. The second condition involves the parameter of frequency-dependent selection $\sigma$, and ensures the existence of a Bernstein dual \cite{cordhumvech2024}. More precisely, we assume that $\sigma$ is a polynomial. In this case, according to \cite{Cordero2022}, there is an integer $\kappa>1$, a vector of rates $\beta\coloneqq(\beta_\ell)_{\ell\in]\kappa]}\in \R_{+}^{\kappa-1}$, and a collection of probabilities
$p\coloneqq (p_i^{(\ell)}:\ell\in ]\kappa], i\in[\ell]_0)$ with $p^{(\ell)}_i\in[0,1]$ and $p_0^{(\ell)}=0=1-p_\ell^{(\ell)}$, such that
\begin{align}
\sigma(x)x(1-x)=\sum_{\ell=2}^{\kappa}\beta_\ell\sum_{i=0}^\ell \binom{\ell}{i}x^i(1-x)^{\ell-i}\Big(p_{i}^{(\ell)}-\frac{i}{\ell}\Big). \label{defselectionterm}
\end{align}
This representation of $\sigma$ is not unique (see \cite{Cordero2022}), and therefore, we set $s\coloneqq(\kappa,\beta,p)$ and write $\sigma_s$ instead of $\sigma$ to emphasize the representation of $\sigma$ we are using.

In \cite[Thm. 2.2]{cordhumvech2024} it was proved that a solution $X$ of SDE \eqref{eq:SDEWFP} with $\sigma=\sigma_s$ has a Bernstein dual. Let us mention that the choice of $s$ has an effect on the dynamics of the dual. A construction of the dual process is given in Appendix \ref{constructiondual}. 

\begin{theo} \label{thbounddiststa}
Let $X\coloneqq (X_t)_{t\geq 0}$ be a solution of SDE \eqref{eq:SDEWFP} with $\sigma=\sigma_s$. Assume that
\begin{align}
C \coloneqq \nu(-1,1) + \theta_a + \theta_A - \mu(-1,1) - \sum_{\ell=2}^{\kappa} \beta_\ell (\ell-1)>0. \label{exporate}
\end{align}
Then $X$ is $W_1$-uniformly ergodic on $[0,1]$ and, if $\rho^X_{\infty}$ denotes the stationary distribution of $X$, the following bounds hold.
\begin{enumerate}
\item For any $t\geq 0$ we have 
\begin{align}
\sup_{x \in [0,1]} W_1 (\rho_{x,t}^X, \rho^X_{\infty}) \leq 3 e^{-Ct/3}. \label{finalboundwass}
\end{align}
\item Further, if $\sup_{n\in\Nb}\Eb_n[L_1]<\infty$, then there is $K>0$ such that for any $t\geq 1$, 
\begin{align}
2 \sup_{x \in [0,1]} W_1 (\rho_{x,t}^X, \rho^X_{\infty}) \leq \sup_{x \in [0,1]} d_{Rad} (\rho_{x,t}^X, \rho^X_{\infty}) \leq 2 K e^{-Ct}. \label{finalboundradon}
\end{align}
\end{enumerate}
 If, in addition, $\Lambda([0,1/2+\epsilon])>0$ for every $\epsilon>0$, then $X$ is also open set recurrent.
\end{theo}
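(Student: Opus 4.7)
The plan is to apply Theorem~\ref{propbounddtv} to the Bernstein dual $V$ of $X$ constructed in Appendix~\ref{constructiondual}, and then to deduce open set recurrence via Proposition~\ref{suppofmuinftyisrecunif} and Theorem~\ref{fullsupport}. By \cite[Thm.~2.2]{cordhumvech2024} and the construction in Appendix~\ref{constructiondual}, $V$ satisfies Condition~\ref{struct}; in particular, that condition makes $\{L=0\}$ absorbing. Hence, to invoke Theorem~\ref{propbounddtv}, it is enough to produce an exponential upper bound on $\mathbb{P}_n(L_t\neq 0)$, where $L_t\coloneqq\dim(V_t)-1$.

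The key estimate is $\mathbb{E}_n[L_t]\leq n\, e^{-Ct}$. From the rates of the dual described in Appendix~\ref{constructiondual}, each lineage is removed (decrement of $L$ by $1$) by mutations and by lineage-reducing environmental events at a combined per-lineage rate $\theta_a+\theta_A+\nu(-1,1)$; coordination-type events increase $L$ by $1$ at per-lineage rate $\mu(-1,1)$; selection events of order $\ell$ increase $L$ by $\ell-1$ at rate $\beta_\ell$; coalescence events (coming from the $\Lambda$-term) only further reduce $L$. Applying the generator of $V$ to the function $w\mapsto\dim(w)-1$ and invoking \eqref{exporate}, one finds it is bounded above by $-C(\dim(w)-1)$, so Gronwall's inequality yields the claimed bound. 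Since $L_t\geq 1$ whenever $L_t\neq 0$, Markov's inequality gives $\mathbb{P}_n(L_t\neq 0)\leq n\, e^{-Ct}$, which in particular shows that $L_t$ is absorbed at $0$ in finite time $\mathbb{P}_n$-almost surely.

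Inserting this into \eqref{eqboundwass} yields $\sup_{x\in[0,1]}W_1(\rho^X_{x,t},\rho^X_\infty)\leq n^{-1/2}+n\, e^{-Ct}$ for every integer $n\geq 1$. Taking $n=\lceil 2^{-2/3}e^{2Ct/3}\rceil$ balances the two terms, and a short computation gives \eqref{finalboundwass}. For \eqref{finalboundradon}, the additional hypothesis $M\coloneqq\sup_n\mathbb{E}_n[L_1]<\infty$, combined with the Markov property and the bound of the previous paragraph, gives, for $t\geq 1$, $\mathbb{E}_n[L_t]=\mathbb{E}_n[\mathbb{E}_{V_1}[L_{t-1}]]\leq M e^{-C(t-1)}$, hence $\sup_{n\geq 1}\mathbb{P}_n(L_t\neq 0)\leq M e^{C}e^{-Ct}$. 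Plugging this into \eqref{eqboundradon} yields the desired bound with $K=Me^C$.

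Finally, Part~(1) gives $W_1$-uniform ergodicity on $[0,1]$, which is stronger than $d_{LP}$-uniform ergodicity, so Proposition~\ref{suppofmuinftyisrecunif} yields $Supp\ \rho^X_\infty\subset\mathcal{R}(X)$. To apply Theorem~\ref{fullsupport} it remains to show that $Supp\ \rho^X_\infty\cap(0,1)\neq\emptyset$: if $\rho^X_\infty$ were concentrated on $\{0,1\}$, starting $X$ from $\rho^X_\infty$ would keep the process in $\{0,1\}$ for all time, but \eqref{eq:bimut} yields strictly positive instantaneous rates pushing the process out of both $0$ and $1$, contradicting stationarity. Theorem~\ref{fullsupport} then gives $\mathcal{R}(X)=[0,1]$. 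The main technical hurdle is the generator computation $\mathcal{G}L\leq -C L$: one must carefully track all the block-counting rates of the dual in Appendix~\ref{constructiondual}, and the specific form of $C$ in \eqref{exporate} is precisely the net drift that emerges from that bookkeeping.
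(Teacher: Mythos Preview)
Your approach is essentially the paper's: bound $\mathbb{P}_n(L_t\neq 0)\leq \mathbb{E}_n[L_t]\leq ne^{-Ct}$ via the generator inequality $\mathcal{B}^L g(n)\leq -Cn$ for $g(n)=n$, then feed this into Theorem~\ref{propbounddtv}. Two points deserve attention.

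First, the step ``apply the generator and invoke Gronwall'' hides a genuine technicality that the paper addresses explicitly: $L$ is in general \emph{not} a Feller process (coordinated branching can double the number of lines, so jumps are unbounded), and $g$ is unbounded, so Dynkin's formula is not available out of the box. The paper (Lemma~\ref{bornespltlemme}) truncates by sending $L$ to $0$ upon exiting $[a_0]$, obtains $\mathbb{E}_n[L_t\mathds{1}_{\{t<T_{a_0}\}}]\leq ne^{-Ct}$ for the truncated chain (which \emph{is} Feller on a finite set), and then lets $a_0\to\infty$ using conservativeness of $L$ and monotone convergence. You should make this localisation explicit rather than appeal directly to Gronwall.

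Second, for the open set recurrence step the paper argues differently: it uses the monotone coupling of the Bernstein dual (Corollary~\ref{m2smallerthanm1}) to get $\int x^2\,\rho^X_\infty(\dd x)<\int x\,\rho^X_\infty(\dd x)$, hence $\rho^X_\infty((0,1))>0$. Your argument (``\eqref{eq:bimut} pushes $X$ out of $\{0,1\}$, so a stationary law cannot sit there'') is correct in spirit but, as written, is not quite a proof: you would need to show that under $\mathbb{P}_0$ (resp.\ $\mathbb{P}_1$) the process leaves $0$ (resp.\ $1$) with positive probability by some fixed time, and then conclude that no mixture of $\delta_0$ and $\delta_1$ can be invariant. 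This is doable directly from the SDE, but the paper's moment argument is cleaner and self-contained within the duality framework already set up.
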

The bound \eqref{finalboundradon}, which slightly improves \eqref{finalboundwass}, requires the generic condition $\sup_{n\in\Nb}\Eb_n[L_1]<\infty$. We close this section with a simple condition on the measure $\Lambda$ that implies it. More precisely, we will assume that
\begin{align}
\limsup_{x \rightarrow 0} \frac{\log \Lambda([0,x))}{\log(x)}<1. \label{thecondition}
\end{align}

\begin{prop} \label{propcond}
For any $\Lambda\in\Ms_f([0,1])$ satisfying \eqref{thecondition}, the underlying process $L$ satisfies  $$\sup_{n \geq 1} \mathbb{E}_n[L_1] < \infty.$$ 
\end{prop}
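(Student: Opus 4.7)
The plan is to dominate the dimension process $L$ by the block-counting process of a pure $\Lambda$-coalescent, up to an independent bounded contribution coming from selection. From the construction in Appendix~\ref{constructiondual}, the dynamics of $L$ splits into upward jumps $L\to L+(\ell-1)$ induced by selection events, occurring at a total rate bounded by $\bar\beta\coloneqq\sum_{\ell=2}^{\kappa}\beta_\ell(\ell-1)$ independently of the current value of $L$, and downward jumps driven by the $\Lambda$-reproduction mechanism (mutations at worst leave $L$ unchanged). Using the Poisson-point construction of the dual, I would produce a pathwise coupling $L_t \leq \bar L_t + N_t$ for every $t\geq 0$, where $\bar L$ is a pure $\Lambda$-coalescent block-counting process started at $L_0$ and $N_t$ counts selection upward jumps, with $\mathbb{E}[N_1]\leq\bar\beta$. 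It then suffices to prove that $\sup_{n\geq 1}\mathbb{E}_n[\bar L_1]<\infty$.

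The next step is to translate the tail assumption \eqref{thecondition} into a lower bound on the total merger rate
\[
\gamma_n\coloneqq\sum_{k=2}^n (k-1)\binom{n}{k}\lambda_{n,k}=\int_0^1\frac{nr-1+(1-r)^n}{r^2}\,\Lambda(\dd r),
\]
where $\lambda_{n,k}=\int_0^1 r^{k-2}(1-r)^{n-k}\Lambda(\dd r)$. Condition \eqref{thecondition} yields $\alpha\in(0,1)$ and $x_0>0$ such that $\Lambda([0,x))\geq x^\alpha$ for all $x\in(0,x_0)$. Restricting the integral above to $r\in(0,1/n)$, using $nr-1+(1-r)^n\geq c\,n^2r^2$ on this range, and an integration by parts against $\Lambda$, one obtains $\gamma_n\geq c\,n^{2-\alpha}$ for all $n$ sufficiently large. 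In particular $\sum_{n\geq 2}\gamma_n^{-1}<\infty$, which is Schweinsberg's criterion for the $\Lambda$-coalescent to come down from infinity.

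The final step is to upgrade the lower bound on $\gamma_n$ into a deterministic envelope $\bar L_t\leq v(t)$ valid for every starting point. This can be obtained via a Gronwall-type comparison between $t\mapsto \mathbb{E}[\bar L_t]$ and the solution of the ODE $\dot u=-c u^{2-\alpha}$, giving $v(t)=O(t^{-1/(1-\alpha)})$ as $t\to 0$. Hence $\sup_{n\geq 1}\mathbb{E}_n[\bar L_1]\leq v(1)<\infty$, and combining this with the coupling of the first step gives the claim.

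The main obstacle is the first step: verifying from the construction in Appendix~\ref{constructiondual} that the coalescence mechanism in the Bernstein dual truly dominates that of a pure $\Lambda$-coalescent, and that mutations never increase $L$. Once this pathwise domination is in place, the remaining coalescent estimates are fairly standard and follow the template recalled above.
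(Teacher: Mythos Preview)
Your decomposition in the first step does not hold. The selective branching of type (b) in Appendix~\ref{constructiondual} occurs at rate $n\beta_\ell$, not $\beta_\ell$: each of the $n$ current lines branches independently. Hence the expected upward drift from selection is $n\bar\beta$, linear in the current state, and your claim that the upward jump rate is bounded independently of $L$ (and that $\mathbb{E}[N_1]\le\bar\beta$) is false. You also omit coordinated branching of type (c), driven by $\mu$, which adds a further $n\,\mu(-1,1)$ to the expected upward drift and can increase $L$ by up to $n$ in a single jump. Consequently the pathwise inequality $L_t\le \bar L_t+N_t$ with a Poisson-type $N$ independent of $n$ cannot be arranged.

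Your steps 2 and 3 are essentially the right analytic ingredients, and the paper uses them, but applied directly to $L$ rather than to a decoupled coalescent. The point is that the coalescence contribution to $\Bc^L g$ (with $g(n)=n$) is exactly $-\gamma_n$ in your notation, and your lower bound $\gamma_n\ge c\,n^{2-\alpha}$ (this is Lemma~\ref{ineqgenerat}) dominates the \emph{linear} upward drift $n(\mu(-1,1)+\bar\beta)$ for large $n$. So one obtains $\Bc^L g(n)\le -n^{\gamma}$ for some $\gamma\in(1,2)$ and all large $n$, and then your ODE comparison in step 3 applies directly to $\phi_n(t)=\mathbb{E}_n[L_t]$ (after a truncation to justify Dynkin's formula, since $L$ is not Feller in general). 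This is precisely the paper's route: no splitting into coalescent plus noise, just a direct generator bound absorbing the linear branching into the superlinear coalescence.
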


\begin{remark}\label{exco}
If $\Lambda(\{0\})>0$, then the limsup in \eqref{thecondition} equals $0$, and hence \eqref{thecondition} is satisfied. Another classical choice for the measure $\Lambda$ is the Beta distribution, i.e. $\Lambda(\dd r)\coloneqq c \ r^{a-1}(1-r)^{b-1}\dd r$ for some $a,b,c>0$. In this case the limsup in \eqref{thecondition} equals $a$, thus \eqref{thecondition} is satisfied if and only if $a \in (0,1)$ and $b, c>0$. 
\end{remark}

\begin{remark}
If $\int_{[0,1]} z^{-1} \Lambda(\dd z)<\infty$ (known as \textit{dust condition}), then $\Lambda([0,x)) \leq x \int_{[0,1]} z^{-1} \Lambda(\dd z)$. Thus, the limsup in \eqref{thecondition} is larger or equal to $1$ and \eqref{thecondition} does not hold. In particular,  \eqref{thecondition} contains the assumption $\int_{[0,1]} z^{-1} \Lambda(\dd z)=\infty$ (known as \textit{no dust condition}). 

In the Beta case, i.e. $\Lambda(\dd r)\coloneqq c \ r^{a-1}(1-r)^{b-1}\dd r$, the condition $\int_{[0,1]} z^{-1} \Lambda(\dd z)=\infty$ is satisfied if and only if $a \in (0,1]$ and $b, c>0$. This together with Remark \ref{exco} shows that \eqref{thecondition} is slightly, but strictly stronger than $\int_{[0,1]} z^{-1} \Lambda(\dd z)=\infty$. 
\end{remark}

\subsubsection{$\Lambda$-Wright--Fisher processes without mutation} \label{applsiegy}
Now, we consider the second class of ergodic $\Lambda$-Wright--Fisher processes. The first assumption on the parameters is that 
\begin{align}
\int_{[0,1]}r^{-1}\Lambda(\dd r)\in(0,\infty),\qquad \Lambda(\{1\})=0,\qquad \nu=0,\qquad \theta_a=\theta_A=0. \label{assumptionnomut}
\end{align}
The first condition imposes a restriction on the strength of neutral reproductions; the conditions on $\nu,\theta_a,\theta_A$ mean that $X$ is not subject to mutations. In \cite[Thm. 2.5]{cordhumvech2022} it was proved that $X$ admits a Siegmund dual.
To determine the long-term behaviour of $X$, one has to compare the neutral forces, parametrized by~$\Lambda$, with the selective forces, parametrized by~$\mu$ and~$\sigma$. To do this, define \begin{align*}
	 C_0(\Lambda,\mu,\sigma)&\textstyle \coloneqq \left(\sigma(0)- \int_{(-1,1)}\log\big(\frac{1}{1+r}\big) {\mu}(\dd r) \right)-\int_{(0,1)}\log\big(\frac{1}{1-r}\big) \frac{\Lambda(\dd r)}{r^2},\\ 
	C_1(\Lambda,\mu,\sigma)&\textstyle \coloneqq \left(-\sigma(1) -\int_{(-1,1)}\log\big(\frac{1}{1-r}\big) \mu(\dd r)\right)-\int_{(0,1)}\log\big(\frac{1}{1-r}\big) \frac{\Lambda(\dd r)}{r^2},
\end{align*}
We know from \cite[Thm. 2.1(3)]{cordhumvech2022} that, in the case where 
\begin{align}
C_0(\Lambda,\mu,\sigma)>0 \ \text{and} \ C_1(\Lambda,\mu,\sigma)>0, \label{assumptionc0c1}
\end{align}
then the process $X$ is ergodic and has a unique stationary distribution $\rho^X_{\infty}$ supported on $(0,1)$. Moreover, $X_t$ converges in distribution as $t$ goes to infinity to $\rho^X_{\infty}$, for every starting point $x \in (0,1)$. This parameter regime was referred to as \emph{coexistence regime}. The aim of this section is to study the distance to stationarity in this regime. 

We will assume two types of integrability for the big jumps, namely, the weak integrability conditions 
\begin{align}\tag{$WH_{\gamma}$}\label{weakhyp}
&\int\limits_{(\frac12,1)} \!\!\log \left ( \frac1{1-r} \right ) ^{1+\gamma} \!\!  \Lambda(\dd r) ,\,\int\limits_{(\frac12,1)} \!\! \log \left ( \frac1{1-r} \right ) ^{1+\gamma}  \!\! \mu(\dd r), \, \int\limits_{(-1,-\frac12)} \!\!\! \log \left ( \frac1{1+r} \right )^{1+\gamma}  \!\! \mu(\dd r) < \infty. \nonumber
\end{align}
and the strong integrability conditions 
\begin{align}\tag{$SH_{\gamma}$}\label{stronghyp}
\int_{(\frac12,1)} \frac{\Lambda(\dd r)}{(1-r)^{\gamma}} 
,\, \ \int_{(\frac12,1)} \frac{\mu(\dd r)}{(1-r)^{\gamma}}, \,
 \ \int_{(-1,-\frac12)} \frac{\mu(\dd r)}{(1+r)^{\gamma}} < \infty. 
\end{align}
We end this section with the following results, which state the ergodicity properties of $X$ under the strong and weak integrability conditions, respectively. 
\begin{theo} \label{speedexpoofcvtostationarity}
We assume \eqref{assumptionnomut} and \eqref{assumptionc0c1} and that \eqref{stronghyp} is satisfied for some $\gamma>0$. Then, for any $p\geq 1$, $X$ is locally $W_p$-uniformly ergodic on $(0,1)$. Let us denote by $\rho^X_{\infty}$ the stationary distribution of $X$. Then, for any $p \geq 1$ and $\epsilon \in (0,1/2)$, there are positive constants $c_1 = c_1(p,\epsilon)$ and $c_2 = c_2(p)$ such that, for any $t \geq 0$,  
\begin{eqnarray}
\sup_{x \in [\epsilon, 1-\epsilon]} W_p(\rho^X_{x,t},\rho^X_{\infty}) \leq c_1 e^{-c_2 t}. \label{boundexpowasserstein}
\end{eqnarray}
If, in addition, $\Lambda([0,1/2+\epsilon])>0$ for every $\epsilon>0$, then $X$ is open set recurrent.
\end{theo}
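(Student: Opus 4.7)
The plan is to apply Theorem~\ref{genericsieg} to the Siegmund dual $Y$ of $X$, then upgrade the three terms in \eqref{decaywpsiegmund} to exponential-in-$t$ decays by letting the threshold parameter shrink with $t$. By \cite[Thm.~2.5]{cordhumvech2022}, under \eqref{assumptionnomut} the process $X$ admits a Siegmund dual $Y$; combined with the Feller property of $\Lambda$-Wright--Fisher processes and the result in \cite{BeFr24}, Condition~\ref{moncoup} is satisfied. Under \eqref{assumptionc0c1}, the results of \cite{cordhumvech2022} show that $Y$ is regularly and uniformly absorbed in $\{0,1\}$. Hence Theorem~\ref{genericsieg} applies, gives $\rho^X_\infty(\{0,1\})=0$, and yields, for every $\epsilon\in(0,1/2)$ and $t\geq 0$,
\[
\sup_{x\in[\epsilon,1-\epsilon]} W_p(\rho^X_{x,t},\rho^X_\infty)^p \leq \sup_{y\in[0,1]}\mathbb{P}_y(Y_t\in[\epsilon,1-\epsilon]) + \mathbb{P}_\epsilon(Y_\infty=1) + \mathbb{P}_{1-\epsilon}(Y_\infty=0).
\]

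The quantitative core of the proof is to establish, under \eqref{stronghyp}, two estimates on $Y$: (a) there exist $C_1,\eta>0$ with $\sup_{y\in[0,1]}\mathbb{P}_y(Y_t\notin\{0,1\})\leq C_1 e^{-\eta t}$; (b) there exist $C_2,\alpha>0$ with $\mathbb{P}_\epsilon(Y_\infty=1)+\mathbb{P}_{1-\epsilon}(Y_\infty=0)\leq C_2\epsilon^{\alpha}$. Both rely on passing to logarithmic coordinates: writing $\widetilde Y_t=-\log Y_t$ near $0$ (and symmetrically $-\log(1-Y_t)$ near $1$), the dynamics of $Y$ described in \cite{cordhumvech2022} transform into a process comparable to a spectrally one-sided L\'evy process with strictly positive drift $C_0(\Lambda,\mu,\sigma)$ (resp.\ $C_1(\Lambda,\mu,\sigma)$), and the strong integrability hypothesis \eqref{stronghyp} translates into a finite exponential moment of order $\alpha>0$ for its big-jump measure. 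From there, the facts on L\'evy processes collected in Appendix~\ref{factslp} yield (a) via a Lyapunov/exponential-moment argument and (b) via optional stopping applied to an exponential martingale of the form $e^{-\alpha\widetilde Y_t}$.

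With (a) and (b) in hand, apply \eqref{decaywpsiegmund} at the time-dependent threshold $\epsilon_t\coloneqq\min(\epsilon,e^{-\beta t})$ with $\beta>0$ to be chosen. Since $[\epsilon,1-\epsilon]\subset[\epsilon_t,1-\epsilon_t]$ for every $t\geq 0$, one obtains
\[
\sup_{x\in[\epsilon,1-\epsilon]} W_p(\rho^X_{x,t},\rho^X_\infty)^p \leq C_1 e^{-\eta t} + 2C_2 e^{-\alpha\beta t},
\]
and taking $\beta=\eta/\alpha$ delivers \eqref{boundexpowasserstein} with $c_2=c_2(p)=\eta/p$ and $c_1=c_1(p,\epsilon)$ absorbing the multiplicative constants.

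For open set recurrence: since $W_p$ dominates $d_{LP}$ (see Appendix~\ref{classicalfacts}), local $W_p$-uniform ergodicity on $(0,1)$ implies local $d_{LP}$-uniform ergodicity on $(0,1)$; combined with $\rho^X_\infty(\{0,1\})=0$, Proposition~\ref{suppofmuinftyisrec} yields $\textrm{Supp}\,\rho^X_\infty\subset\mathcal{R}(X)$. Because $\rho^X_\infty((0,1))=1$, the set $(0,1)$ contains at least one point of $\textrm{Supp}\,\rho^X_\infty$, so $\mathcal{R}(X)\cap(0,1)\neq\emptyset$; the extra assumption $\Lambda([0,1/2+\epsilon])>0$ for every $\epsilon>0$ then triggers Theorem~\ref{fullsupport}, giving $\mathcal{R}(X)=[0,1]$. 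The principal obstacle is step (b): qualitative regular absorption is already established in \cite{cordhumvech2022}, but converting it into a \emph{polynomial} rate in $\epsilon$ under \eqref{stronghyp} is the substantive technical ingredient that makes the time-dependent threshold argument yield the full exponential decay.
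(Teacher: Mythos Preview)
Your overall approach matches the paper's proof: apply Theorem~\ref{genericsieg}, feed in quantitative estimates on the Siegmund dual $Y$ with a time-dependent threshold, and combine to get exponential decay; the open set recurrence argument via Proposition~\ref{suppofmuinftyisrec} and Theorem~\ref{fullsupport} is identical to the paper's.

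There is, however, one genuine slip in your formulation of (a). You claim $\sup_y\mathbb{P}_y(Y_t\notin\{0,1\})\leq C_1 e^{-\eta t}$, which would require $Y$ to be \emph{absorbed} at $\{0,1\}$ in finite time with exponential tails. In the present setting $Y$ is only \emph{asymptotically} absorbed: the L\'evy comparison of Lemma~\ref{lem:levysandwich} gives $Y_t\leq Y_0 e^{-L^b_t}>0$, so $Y$ converges to the boundary exponentially fast but need not hit it, and $\mathbb{P}_y(Y_t\notin\{0,1\})$ may well equal $1$ for every $t$. The correct estimate --- stated in the paper as Proposition~\ref{thm:coexistenceY}(ii), quoting \cite{cordhumvech2022} --- is the weaker
\[
\sup_{y\in[0,1]}\mathbb{P}_y\big(Y_t\in[e^{-\ell t},1-e^{-\ell t}]\big)\leq K_1 e^{-K_2 t}
\]
for any fixed $\ell\in(0,C_0(\Lambda,\mu,\sigma)\wedge C_1(\Lambda,\mu,\sigma))$. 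This is exactly what your time-dependent threshold argument actually uses: take $\epsilon_t=e^{-\ell t}$ (for $t\geq\ell^{-1}\log(1/\epsilon)$ so that $\epsilon_t\leq\epsilon$), combine with (b) --- which is Proposition~\ref{queueexpopix}(ii) in the paper --- and absorb the bounded range of small $t$ into $c_1$ via $W_p\leq 1$. With (a) restated this way your proof coincides with the paper's.
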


\begin{theo} \label{speedpolofcvtostationarity}
We assume \eqref{assumptionnomut} and \eqref{assumptionc0c1} and that \eqref{weakhyp} is satisfied for some $\gamma>0$. Then, for any $p\geq 1$, $X$ is locally $W_p$-uniformly ergodic on $(0,1)$. Let us denote by $\rho^X_{\infty}$ the stationary distribution of $X$. Then, for any $p \geq 1$, $\epsilon \in (0,1/2)$ and $\beta \in (0,(1+\gamma)\gamma/(2\gamma+1)p)$, there is a positive constant $C = C(p,\epsilon,\beta)$ such that, for any $t \geq 0$, 
\begin{eqnarray}
\sup_{x \in [\epsilon, 1-\epsilon]} W_p(\rho^X_{x,t},\rho^X_{\infty}) \leq C t^{-\beta}. \label{boundpolwasserstein}
\end{eqnarray}
If, in addition, $\Lambda([0,1/2+\epsilon])>0$ for every $\epsilon>0$, then $X$ is open set recurrent.
\end{theo}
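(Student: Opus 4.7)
The plan is to follow the same template as Theorem~\ref{speedexpoofcvtostationarity}, applying Theorem~\ref{genericsieg} to the Siegmund dual $Y$ of $X$ but replacing exponential absorption estimates by polynomial ones, and then to deduce the open set recurrence from Proposition~\ref{suppofmuinftyisrec} combined with Theorem~\ref{fullsupport}.

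First I would verify the hypotheses of Theorem~\ref{genericsieg}. Under \eqref{assumptionnomut}, the Siegmund dual $Y$ of $X$ exists by \cite[Thm.~2.5]{cordhumvech2022}, and the measurable monotone coupling of Condition~\ref{moncoup} follows from the Feller property of $X$ via \cite{BeFr24}. The coexistence hypothesis \eqref{assumptionc0c1}, together with \cite[Thm.~2.1(3)]{cordhumvech2022}, implies that $Y$ is asymptotically, regularly and uniformly absorbed in $\{0,1\}$---the strict positivity of $C_0(\Lambda,\mu,\sigma)$ and $C_1(\Lambda,\mu,\sigma)$ is exactly what drives $Y$ to the boundary---and that $\rho^X_\infty$ is supported on $(0,1)$, so $\rho^X_\infty(\{0,1\})=0$.

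The technical heart of the proof is the derivation, from \eqref{weakhyp}, of two polynomial estimates on $Y$: a power-law bound on the absorption location,
\[
\Pb_{\epsilon'}(Y_\infty=1)+\Pb_{1-\epsilon'}(Y_\infty=0)\leq C_1\,(\epsilon')^{\gamma},
\]
and a polynomial bound on the sojourn in the bulk,
\[
\sup_{y\in[0,1]}\Pb_y\bigl(Y_t\in[\epsilon',1-\epsilon']\bigr)\leq C_2\,(\epsilon')^{-(1+\gamma)}\,t^{-(1+\gamma)}.
\]
The idea is that near the boundaries $\{0,1\}$, the transforms $-\log Y_{\cdot}$ and $-\log(1-Y_{\cdot})$ are driven by L\'evy-type processes whose positive jumps have only finite $(1+\gamma)$-moments under $(WH_{\gamma})$. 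The first estimate then follows from an optional stopping argument on a suitable logarithmic submartingale, while the second is obtained by bounding the $(1+\gamma)$-th moment of the exit time of $Y$ from $[\epsilon',1-\epsilon']$ and applying Markov's inequality, using the auxiliary L\'evy-process facts of Appendix~\ref{factslp}.

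With these bounds in hand, I would apply \eqref{decaywpsiegmund} with $\epsilon'\in(0,\epsilon]$ (using $[\epsilon,1-\epsilon]\subset[\epsilon',1-\epsilon']$) and optimize $\epsilon'$ in $t$: balancing the two contributions above gives the choice $\epsilon'_t\coloneqq t^{-(1+\gamma)/(2\gamma+1)}\wedge\epsilon$, which yields
\[
\sup_{x\in[\epsilon,1-\epsilon]}W_p(\rho^X_{x,t},\rho^X_\infty)^p\leq C\,t^{-\gamma(1+\gamma)/(2\gamma+1)}
\]
for $t$ large and, after enlarging the constant, for all $t\geq0$. Taking $p$-th roots produces the threshold exponent $\gamma(1+\gamma)/(p(2\gamma+1))$, so any $\beta$ strictly below it is admissible. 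For the open set recurrence, local $W_p$-uniform ergodicity dominates local $d_{LP}$-uniform ergodicity via \eqref{classicalLP}, so Proposition~\ref{suppofmuinftyisrec} gives $Supp\ \rho^X_\infty\subset\mathcal{R}(X)\cap(0,1)\neq\emptyset$, and Theorem~\ref{fullsupport} then yields $\mathcal{R}(X)=[0,1]$. The main obstacle is the second polynomial estimate: with only $(1+\gamma)$-log-moment control on the big jumps one cannot afford crude truncation, and a delicate exit-time moment analysis of the underlying L\'evy process is required---it is this analysis, together with the balancing of $(\epsilon')^{\gamma}$ against $(\epsilon')^{-(1+\gamma)}t^{-(1+\gamma)}$, that produces the specific exponent $\gamma(1+\gamma)/(2\gamma+1)$.
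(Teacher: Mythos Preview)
Your overall template---verify the hypotheses of Theorem~\ref{genericsieg} via the Siegmund dual, plug estimates on $Y$ into \eqref{decaywpsiegmund}, then deduce open set recurrence from Proposition~\ref{suppofmuinftyisrec} and Theorem~\ref{fullsupport}---is correct, and your treatment of the recurrence part is fine. But the two ``technical heart'' estimates you posit are not the ones that hold under \eqref{weakhyp}, and the optimisation in $\epsilon'$ is not how the exponent $(1+\gamma)\gamma/(2\gamma+1)$ arises.

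Concretely, the bound $\Pb_{\epsilon'}(Y_\infty=1)+\Pb_{1-\epsilon'}(Y_\infty=0)\le C_1(\epsilon')^{\gamma}$ is the \emph{strong} regularity estimate (Proposition~\ref{queueexpopix}(ii), under \eqref{stronghyp}). Under \eqref{weakhyp} one only has $(1+\gamma)$-moments for the logarithmic jumps of $Y$ near the boundary, hence only polynomial moments for $-\inf_{s\ge0}L^b_s$ (Lemma~\ref{polmomentvaluesup} with $m=1+\gamma$), and Proposition~\ref{queueexpopix}(i) gives merely
\[
\Pb_{\epsilon'}(Y_\infty=1)+\Pb_{1-\epsilon'}(Y_\infty=0)\le K\bigl(\log(1/\epsilon')\bigr)^{-\alpha},\qquad \alpha<\tfrac{(1+\gamma)\gamma}{2\gamma+1}.
\]
Likewise, the sojourn bound available under \eqref{weakhyp} is Proposition~\ref{thm:coexistenceY}(i), which controls $\Pb_y(Y_t\in[e^{-\ell t},1-e^{-\ell t}])\le Kt^{-\alpha}$ for $\alpha<\gamma$; there is no $(\epsilon')^{-(1+\gamma)}t^{-(1+\gamma)}$ estimate on a fixed interval. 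The paper therefore does \emph{not} balance two power laws in $\epsilon'$. It takes the exponentially shrinking window $a=e^{-\ell t}$ in \eqref{decaywpsiegmund}: the sojourn term is then directly $\le Kt^{-\alpha}$, and the logarithmic absorption bound becomes $K(\ell t)^{-\alpha}$. Both terms are $O(t^{-\alpha})$, the binding constraint is $\alpha<(1+\gamma)\gamma/(2\gamma+1)$ (coming from Lemma~\ref{polmomentvaluesup}, not from a balancing), and setting $\alpha=p\beta$ gives \eqref{boundpolwasserstein}. That your $\epsilon'$-optimisation reproduces the same exponent is a coincidence of algebra; the estimates feeding it are not valid under \eqref{weakhyp}.
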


\section{Proofs of results for processes with Bernstein dual} \label{bernsetinsection}
We begin this section by revisiting some approximation results from real analysis. Let $f \in \mathcal{C}([0,1])$ and let $f_n$ be the Bernstein polynomial approximation of $f$, i.e. $$f_n(z)\coloneqq\sum_{i=0}^n f(i/n)\binom{n}{i} z^i(1-z)^{n-i}.$$ By Bernstein uniform convergence theorem, we have 
\begin{align}
|| f - f_n ||_{\infty} \underset{n \rightarrow \infty}{\longrightarrow} 0. \label{bernunifcvth}
\end{align}
Moreover, if $f$ belongs to the set $\mathcal{L}_c([0,1])$ of Lipschitz continuous functions on $[0,1]$ with Lipschitz constant $c$, then it is well-known that
\begin{align}
 \ || f - f_n ||_{\infty} \leq \frac{c}{2 \sqrt{n}},\qquad n\in\Nb. \label{bernunifcvthlip}
\end{align}
Let us assume now that $X$ is a $[0,1]$-valued Markov process that admits a Bernstein dual $V$ satisfying Condition \ref{struct}.
Fix now $x \in [0,1]$, $n \geq 1$ and $t \geq 0$. Using the duality relation \eqref{bernsteinduality} with $v\coloneqq v_{f,n}\coloneqq (f(i/n))_{i=0}^n\in\R^{n+1}$, we get, for $t\geq0$,
\begin{align}
\mathbb{E}_x\left[f_n(X_t)\right]=\mathbb{E}_{v_{f,n}}\left[H(x,V_t)\right]. \label{berndualfn}
\end{align}
In Sections \ref{proofrelation} and \ref{proofbernsteincase} we will use this expression combined with \eqref{bernunifcvth}-\eqref{bernunifcvthlip} to approximate quantities of the form $\mathbb{E}_x[f(X_t)]$.

\subsection{Relation between Bernstein and Siegmund duals} \label{proofrelation}
This section is devoted to proving Theorem \ref{thm:relduals}. Recall that the process $X$ in the statement has both a Siegmund dual $Y$ and a Bernstein dual $V$.
\begin{proof}[Proof of Theorem \ref{thm:relduals}]
Let $F$ be as in the statement of the theorem. In particular, $F \in \mathcal{L}_c([0,1])$ with $c=||F'||_{\infty}$.  Let also $Z$ be a random variable with distribution function $F$, independent of $X$. By slight abuse of notation, we continue to denote by $\P_x$ the probability measure in the probability space on which $X$ and $Z$ are defined. We start with the observation that the right hand-side in the Bernstein duality \eqref{bernsteinduality} can be rewritten as 
\begin{align}
\E_v\left[H(x,V_t)\right]=\E_v\left[V_t(\beta_x(L_t))\right], \label{berndualbin}
\end{align}
where $\beta_x(\cdot)$ is as in the statement of the theorem. Using the definitions of $Z$ and the Siegmund duality we get 
\begin{align}
\E_x\left[F(X_t)\right] = \mathbb{P}_{x}(Z \leq X_t) = \mathbb{P}_F(Y_t \leq x).  \label{momxtofctrepy}
\end{align}
Fix $n \geq 1$ and set $v=v_{F,n}$, where $v_{F,n}\in\R^{n+1}$ is as in the statement of the theorem. 
The combination of \eqref{berndualbin}, \eqref{berndualfn}, \eqref{bernunifcvthlip} and \eqref{momxtofctrepy} yields \eqref{approxftbygt}. 
\end{proof}
\subsection{Distance to stationarity via Bernstein duality} \label{proofbernsteincase}
In this section, we prove Theorem \ref{propbounddtv} concerning the ergodic properties of $[0,1]$-valued processes, which admit a Bernstein dual satisfying Condition \ref{struct} and which become trapped in $\Rb$ in finite time.

\begin{proof}[Proof of Theorem \ref{propbounddtv}]
The combination of the assumption on the absorption of $L$ at $0$ with Condition \ref{struct} yields that, for any $v \in \R^\infty$, $\P_{v}$-almost surely $$\tau \coloneqq \inf \{ s \geq 0: L_s=0 \}<\infty,$$  and there is a random variable $U_\infty$ such that, for $t \geq \tau$, we have $\dim(V_t)=1$ and $V_t$ is the vector whose single entry is $U_\infty$. In particular, for any $x \in [0,1]$, $H(x,V_t)=U_\infty$ for all $t \geq \tau$. Let $e_k \in \R^{k+1}$ be given by $e_k(i)\coloneqq \mathds{1}_{\{i=k\}}$. Thanks to Bernstein duality \eqref{bernsteinduality} we get  $$\E_x[X_t^k] = \E_x[H(X_t,e_k)] = \E_{e_k}[ H(x,V_t)].$$ By \eqref{normineq} we have $\P_{e_k}$-a.s. $|H(x,V_t)|\leq 1$ for all $t\geq 0$. We thus get by dominated convergence that $\E_{e_k}[ H(x,V_t)]$, and therefore $\E_x[X_t^k]$, converges to $\E_{e_k}[ U_\infty]$ as $t\to\infty$. Therefore, all positive integer moments of $X_t$ converge as $t\to\infty$ to a limit, which does not depend on the initial condition $x$. Since probability distributions on [0, 1] are fully characterised by their positive integer moments and the convergence of these moments implies convergence in distribution, we conclude that, there is a distribution $\rho^X_{\infty}$ on $[0,1]$ such that, for any $x \in (0,1)$, the law of $X_t$ under $\P_x$ converges weakly to $\rho^X_{\infty}$. Moreover, for any $k\geq 1$, 
\begin{align}
\int_{[0,1]}x^k\rho^X_{\infty}(dx) = \E_{e_k}[ U_\infty]. \label{expressionmoments}
\end{align}
Let $f \in \mathcal{C}([0,1])$ and $f_n$ be its Bernstein polynomial approximation. Letting $t\to\infty$ in \eqref{berndualfn} yields 
\begin{align}
\int_{[0,1]} f_n(z) \rho^X_{\infty}(dz) =\mathbb{E}_{v_{f,n}}\left[U_{\infty}\right]. \label{berndualfninfty}
\end{align}
Note that $H(x,V_t) = U_{\infty} \mathds{1}_{\{\tau \leq t\}}+H(x,V_t) \mathds{1}_{\{\tau > t\}}$. Combining this with \eqref{berndualfn} and \eqref{berndualfninfty} we get 
\begin{align}
\left | \mathbb{E}_x[f_n(X_t)] - \int_{[0,1]} f_n(z) \rho^X_{\infty}(dz) \right | \leq \mathbb{E}_{v_{f,n}}\left[|U_{\infty}-H(x,V_t)|\mathds{1}_{\{\tau > t\}}\right]. \label{eqlemboundespfx1}
\end{align}
Thanks to \eqref{normineq}, we have $|H(x,V_t)|\leq ||v_{f,n}||_{\infty}$ and $|U_{\infty}|\leq ||v_{f,n}||_{\infty}$. Since, in addition, we have $||v_{f,n}||_{\infty} = \max_{i \in \{0,...,n\}} |f(i/n)| \leq ||f||_{\infty}$, we infer that
\begin{align*}
\left | \mathbb{E}_x[f_n(X_t)] - \int_{[0,1]} f_n(z) \rho^X_{\infty}(dz) \right | \leq 2 ||f||_{\infty} \mathbb{P}_{n}(\tau > t) = 2 ||f||_{\infty} \mathbb{P}_n(L_t \neq 0). 
\end{align*}
Writing $f_n=f+(f_n-f)$ yields that, for all $n\in\Nb$, 
\begin{align}
\left | \mathbb{E}_x[f(X_t)] - \int_{[0,1]} f(z) \rho^X_{\infty}(dz) \right | \leq 2 || f - f_n ||_{\infty} + 2 ||f||_{\infty} \mathbb{P}_n(L_t \neq 0). \label{finalest}
\end{align}
Combining this with \eqref{bernunifcvth} we obtain
\begin{align}
\left | \mathbb{E}_x[f(X_t)] - \int_{[0,1]} f(z) \rho^X_{\infty}(dz) \right | \leq 2 ||f||_{\infty}  \sup_{n \geq 1} \mathbb{P}_n(L_t \neq 0), \label{eqlemboundespfxcdi}
\end{align}
and \eqref{eqboundradon} then follows from \eqref{eqlemboundespfxcdi} together with \eqref{defradondist} and \eqref{inegradonwass}.

Now, let $f \in \mathcal{L}_1([0,1])$.  Set $\tilde f(\cdot)\coloneqq f(\cdot)-f(1/2)$ and note that the left-hand side of \eqref{finalest} is the same for $f$ and $\tilde f$ and that $|| \tilde f - \tilde f_n ||_{\infty}=|| f - f_n ||_{\infty}$ and $2 ||\tilde f||_{\infty} \leq 1$. 
Thus, applying \eqref{finalest} to $\tilde f$ in combination with \eqref{bernunifcvthlip}, we obtain 
\begin{align}
\left | \mathbb{E}_x[f(X_t)] - \int_{[0,1]} f(z) \rho^X_{\infty}(dz) \right | \leq \frac{1}{\sqrt{n}} + \mathbb{P}_n(L_t \neq 0). \label{eqlemboundespfxgen}
\end{align}
Therefore, \eqref{eqboundwass} follows from the combination of \eqref{eqlemboundespfxgen} with \eqref{kantwassdist}.

Finally, \eqref{eqboundwass} and the assumption that $L$ is absorbed at $0$ almost surely imply that $X$ is $W_1$-uniformly ergodic on $[0,1]$. 
\end{proof}
\subsection{Distance to stationarity for $\Lambda$-Wright--Fischer processes with mutation}\label{probextasg}
This section is devoted to the proofs of Theorem \ref{thbounddiststa} and Proposition \ref{propcond}. The proof of Theorem \ref{thbounddiststa} will follow from Theorem \ref{propbounddtv} and appropriate bounds for the non-extinction probability $\mathbb{P}_n(L_t \neq 0)$ that we will obtain here.

For our analysis, it is the dynamics of the process $L$ that will be relevant, as opposed to the dynamics of the Bernstein dual $V$, which are not required in this section. Throughout this section, we will assume that the assumption \eqref{exporate} on the parameter $C$ is satisfied, i.e. $C>0$. Recall from \cite{cordhumvech2024} that the rate matrix $\Bc^L \in \R^{\Nb_0 \times \Nb_0}$ of the process $L$ is given by 
\begin{align*}
\Bc^L f(n) & = \sum_{k=2}^n \binom{n}{k} \lambda_{n,k} (f(n-k+1)-f(n)) \\
&+ \sum_{k=1}^n \binom{n}{k}\sigma_{n,k} (f(n+k)-f(n))  + \sum_{\ell=2}^{\kappa} n\beta_\ell (f(n+\ell-1)-f(n)) \\
&+ \sum_{k=1}^n \binom{n}{k}m_{n,k} (f(n-k)-f(n)) + n (\theta_a+\theta_A) (f(n-1)-f(n)),
\end{align*}
where
		\begin{equation*}
			\lambda_{n,k}\coloneqq \int_{[0,1]} z^{k-2}(1-z)^{n-k}\Lambda(\dd z), 
		\end{equation*}
		with the convention that $0^0=1$ (in particular $\lambda_{n,2}=\Lambda(\{0\})+\int_{(0,1]} (1-z)^{n-2}\Lambda(\dd z)$), $m_{n,\ell}\coloneqq m^a_{n,\ell}+m^A_{n,\ell}$ with
		\begin{equation*}
			m^a_{n,\ell}=\int_{(0,1)} z^{\ell-1}(1-z)^{n-\ell}\nu( \dd z)\quad \textrm{and}\quad m^A_{n,\ell}=\int_{(-1,0)} \lvert z\rvert^{\ell-1}(1-\lvert z\rvert )^{n-\ell}\nu( \dd z),
		\end{equation*}
and $\sigma_{n,\ell}=\sigma^a_{n,\ell}+\sigma^A_{n,\ell}$ with
		\begin{equation*}
			\sigma^a_{n,\ell}=\int_{(0,1)} z^{\ell-1}(1-z)^{n-\ell}\mu( \dd z)\quad \textrm{and}\quad \sigma^A_{n,\ell}=\int_{(-1,0)} \lvert z\rvert^{\ell-1}(1-\lvert z\rvert)^{n-\ell}\mu(\dd z).
		\end{equation*}
For any $i,j \in \Nb_0$ with $i \neq j$, ${\Bc}^L(i,j)$ is the total transition rate from $i$ to $j$ and, for any $i \in \Nb_0$, ${\Bc}^L(i,i):=-\sum_{j \in \Nb_0 \setminus \{i\}} {\Bc}^L(i,j)$. We see that ${\Bc}^L(i,\cdot)$ has finite support for all $i \in \Nb_0$. For any $g:\Nb_0\to\Rb$ we define ${\Bc}^Lg$ via ${\Bc}^Lg(i)=\sum_{j \in \Nb_0} {\Bc}^L(i,j) (g(j)-g(i))$. Whenever $L$ is a Feller process (which is not always the case in our setting), the operator $\Bc^L$ is the generator of the process $L$. We start with the following useful bounds for the operator $\Bc^L$. 
\begin{lemme} \label{ineqgenerat}Let $g:\Nb_0\to\Nb_0$ be the identity function, i.e. $g(n)=n$. Then, $$\Bc^L g(n) \leq -C n,\quad n\in\Nb_0,$$ with $C$ as in \eqref{exporate}. If, in addition, \eqref{thecondition} holds, then there is $\gamma\in(1,2)$ and $n_0 \in\Nb$ such that $${\Bc}^Lg(n) \leq -n^{\gamma},\quad n\geq n_0.$$
\end{lemme}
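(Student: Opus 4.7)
The strategy is a direct computation: apply $\Bc^L$ to the identity function $g(n)=n$, simplify using elementary binomial identities to isolate a clean expression, and then extract the sharp second bound from the coalescence contribution.

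Substituting $g(n)=n$ turns each increment $g(n+j)-g(n)$ into the signed integer $j$. The five resulting sums collapse via the identity $k\binom{n}{k}=n\binom{n-1}{k-1}$ together with $\sum_{k=1}^n\binom{n-1}{k-1}z^{k-1}(1-z)^{n-k}=1$. Applied inside the definitions of $\sigma_{n,k}$ and $m_{n,k}$, these give
\begin{equation*}
\sum_{k=1}^n\binom{n}{k}k\,\sigma_{n,k}=n\,\mu(-1,1),\qquad \sum_{k=1}^n\binom{n}{k}k\,m_{n,k}=n\,\nu(-1,1),
\end{equation*}
while the selection contribution simplifies immediately to $n\sum_{\ell=2}^{\kappa}\beta_\ell(\ell-1)$. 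Collecting the terms yields the pleasant identity
\begin{equation*}
\Bc^L g(n)=-\sum_{k=2}^n\binom{n}{k}(k-1)\lambda_{n,k}-C\,n,
\end{equation*}
with $C$ as in \eqref{exporate}. The first inequality follows at once by discarding the non-negative coalescence sum.

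For the sharper estimate under \eqref{thecondition}, I would quantify the coalescence contribution via the generating-function identity
\begin{equation*}
\sum_{k=2}^n\binom{n}{k}(k-1)z^{k-2}(1-z)^{n-k}=\frac{nz-1+(1-z)^n}{z^2},\qquad z\in(0,1],
\end{equation*}
which can be derived by combining $\sum_k\binom{n}{k}z^k(1-z)^{n-k}=1$ with $\sum_k\binom{n}{k}kz^k(1-z)^{n-k}=nz$. The coalescence sum then equals $\binom{n}{2}\Lambda(\{0\})+\int_{(0,1]}z^{-2}(nz-1+(1-z)^n)\,\Lambda(\dd z)$. If $\Lambda(\{0\})>0$ the first term already grows like $n^2$, so any $\gamma\in(1,2)$ works. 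Otherwise, fix $\alpha\in(0,1)$ strictly larger than the limsup in \eqref{thecondition}, so that $\Lambda([0,x))\geq x^\alpha$ for all sufficiently small $x$. The double-integral representation
\begin{equation*}
(1-z)^n-1+nz=n(n-1)\int_0^z (z-s)(1-s)^{n-2}\,\dd s,
\end{equation*}
combined with $(1-s)^{n-2}\geq (1-1/n)^{n-2}\geq (2e)^{-1}$ for $s\in[0,1/n]$ and $n$ large, gives $z^{-2}(nz-1+(1-z)^n)\geq c\,n^2$ uniformly for $z\in(0,1/n]$. Restricting the integral to this window and applying the tail bound then yields
\begin{equation*}
\int_{(0,1]}\frac{nz-1+(1-z)^n}{z^2}\Lambda(\dd z)\geq c\,n^2\,\Lambda((0,1/n])\geq c\,n^{2-\alpha}.
\end{equation*}
Choosing any $\gamma\in(1,2-\alpha)\subset(1,2)$ makes $n^\gamma$ negligible compared with $n^{2-\alpha}$, and hence $\Bc^L g(n)\leq -n^\gamma$ for all $n\geq n_0$ large enough.

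The main obstacle will be the uniform lower bound $z^{-2}(nz-1+(1-z)^n)\geq c\,n^2$ on $(0,1/n]$. The naive Taylor expansion of $(1-z)^n$ is alternating, so a term-by-term estimate risks losing the quadratic leading order to higher-order corrections. The double-integral representation above sidesteps this entirely by producing a manifestly non-negative expression, and reduces the problem to the elementary lower bound $(1-s)^{n-2}\geq (2e)^{-1}$ on the shrinking window $s\in[0,1/n]$. All remaining steps are routine binomial manipulations or immediate consequences of the assumption on $\Lambda$ near the origin.
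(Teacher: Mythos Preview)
Your proof is correct and follows essentially the same route as the paper: compute $\Bc^L g(n)$ explicitly, identify the coalescence contribution as $\binom{n}{2}\Lambda(\{0\})+\int_{(0,1]}z^{-2}(nz-1+(1-z)^n)\Lambda(\dd z)$, discard it for the first bound, and for the second bound restrict the integral to $z\in(0,1/n]$ and combine the resulting $n^2\Lambda([0,1/n))$ with \eqref{thecondition}. The only difference is in how you lower-bound $z^{-2}(nz-1+(1-z)^n)$ on that window: the paper uses exactly the alternating-series truncation you warn against (bounding $\sum_{k\geq 3}\binom{n}{k}(-z)^k\geq -\binom{n}{3}z^3$ on $(0,1/n)$ to obtain the constant $2/3$), whereas you use the Taylor integral remainder, which is equally valid and arguably more transparent since it produces a manifestly non-negative expression without checking monotonicity of the terms.
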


\begin{proof}
Applying the operator $\Bc^L$ to the function $g$ we get 
\begin{align*}\label{coalplusrest}
\Bc^L g(n) & = \sum_{k=2}^n \binom{n}{k} \lambda_{n,k} (n-k+1-n) + \sum_{k=1}^n \binom{n}{k}\sigma_{n,k} (n+k-n) \nonumber\\
& + \sum_{\ell=2}^{\kappa} n\beta_\ell (n+\ell-1-n) + \sum_{k=1}^n \binom{n}{k}m_{n,k} (n-k-n) + n (\theta_a+\theta_A) (n-1-n)  \nonumber\\
& = - \binom{n}{2} \Lambda(\{0\}) - \int_{(0,1]} \left ( nz - 1 + (1-z)^n \right ) \frac{\Lambda(\dd z)}{z^2}  \nonumber\\
& + n \left ( \int_{(-1,1)} |r| \,\frac{\mu(\dd r)}{|r|} + \sum_{\ell=2}^{\kappa} \beta_\ell (\ell-1) - \int_{(-1,1)} |r| \, \frac{\nu(\dd r)}{|r|} - \theta_a - \theta_A \right )  \nonumber\\
& = - \binom{n}{2} \Lambda(\{0\}) -\int_{(0,1]} \left ( nz - 1 + (1-z)^n \right ) \frac{ \Lambda(\dd z)}{z^2} - C n.
\end{align*}
Thus,
\begin{equation}\label{coalplusrest}
\Bc^L g(n)=\Bc_\Lambda^L(n)-C n
\end{equation}
with
\begin{align*}
{\Bc}^L_{\Lambda} g(n) & \coloneqq -\binom{n}{2} \Lambda(\{0\}) - \int_{(0,1]} \left ( nz - 1 + (1-z)^n \right )  \frac{\Lambda(\dd z)}{z^2}.
\end{align*}
Since ${\Bc}^L_{\Lambda} g(n)\leq 0$, the first bound follows. For the second bound, note that
\begin{align*}
{\Bc}^L_{\Lambda} g(n) & \leq -\binom{n}{2} \Lambda(\{0\}) - \int_{(0,1/n)} \left ( \sum_{k=2}^n \binom{n}{k} (-1)^k z^{k} \right ) \frac{\Lambda(\dd z)}{z^2}. 
\end{align*}
For $z \in (0,1/n)$, the sum in the previous expression is an alternating sum, whose terms have a decreasing absolute value. Thus, we have $\sum_{k=3}^n \binom{n}{k} (-1)^k z^{k} \geq -\binom{n}{3} z^{3}$ and therefore 
\begin{align}
{\Bc}^L_{\Lambda} g(n) & \leq -\binom{n}{2} \Lambda(\{0\}) - \int_{(0,1/n)} \left ( \binom{n}{2} z^{2} - \binom{n}{3} z^{3} \right ) \frac{\Lambda(\dd z)}{z^2} \nonumber \\
& \leq -\binom{n}{2} \Lambda(\{0\}) - \binom{n}{2} \int_{(0,1/n)} \left ( 1 - \frac{n-2}{3} z \right ) \Lambda(\dd z) \nonumber \\
& \leq -\binom{n}{2} \Lambda(\{0\}) - \frac{2}{3} \binom{n}{2} \int_{(0,1/n)}  \Lambda(\dd z) \leq - \frac{2}{3} \binom{n}{2} \Lambda([0,1/n)). \label{boundbinlambda}
\end{align}
For any $\delta$ that lies strictly between the limsup from \eqref{thecondition} and $1$, we have $\Lambda([0,x))>x^{\delta}$ for all $x$ sufficiently small. Combining with \eqref{boundbinlambda} we get, for $n$ sufficiently large, that 
\begin{align}
{\Bc}^L_{\Lambda} g(n) \leq - \frac{2}{3} \binom{n}{2} n^{-\delta} \underset{n \rightarrow \infty}{\sim} - \frac{n^{2-\delta}}{3}. \label{boundbinlambdabis}
\end{align}
Using this together with \eqref{coalplusrest} yields the second bound for any choice of $\gamma \in (1,2-\delta)$. 
\end{proof}

\begin{lemme} \label{bornespltlemme}
For any $n \geq 1$ and $t \geq 0$ we have 
\begin{align}
\mathbb{P}_n(L_t \neq 0) \leq \mathbb{E}_n[L_t] \leq n e^{-Ct}. \label{bornesplt}
\end{align}
\end{lemme}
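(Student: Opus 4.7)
The plan is to get the first inequality for free from Markov's inequality (since $L_t$ takes values in $\Nb_0$, $\mathbb{P}_n(L_t \neq 0) = \mathbb{P}_n(L_t \geq 1) \leq \mathbb{E}_n[L_t]$), and to obtain the exponential decay of $\mathbb{E}_n[L_t]$ by combining the generator bound $\Bc^L g(n) \leq -Cn$ from Lemma \ref{ineqgenerat} with a Gronwall-type argument executed through Dynkin's formula and a localization procedure.

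More concretely, I would introduce the time-dependent function $f(t,m) \coloneqq e^{Ct} m$ on $\Rb_+ \times \Nb_0$. The generator bound of Lemma \ref{ineqgenerat} rewrites as
\[
\partial_t f(t,m) + \Bc^L f(t,\cdot)(m) = e^{Ct}\bigl(C m + \Bc^L g(m)\bigr) \leq 0, \qquad m\in\Nb_0,\ t\geq 0,
\]
so that $f(t,L_t)$ should be a supermartingale. Since $g$ is unbounded and $L$ is not assumed to be Feller, this must be made rigorous through localization. Set $\tau_N \coloneqq \inf\{s\geq 0 : L_s \geq N\}$. On $\{L_s < N\}$ the total jump rate of $L$ out of $L_s$ is bounded, so the standard compensation identity yields that
\[
M^N_t \coloneqq e^{C(t\wedge\tau_N)}\,L_{t\wedge\tau_N} - n - \int_0^{t\wedge\tau_N} e^{Cs}\bigl(C L_s + \Bc^L g(L_s)\bigr)\dd s
\]
is a true martingale (it is the sum of a bounded compensated pure-jump martingale up to the bounded stopping time $t\wedge\tau_N$). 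Taking expectations and using the sign of the integrand gives
\[
\mathbb{E}_n\bigl[e^{C(t\wedge\tau_N)}\,L_{t\wedge\tau_N}\bigr] \leq n.
\]

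It remains to pass to the limit $N\to\infty$. The previous inequality applied on $\{\tau_N \leq t\}$, where $L_{\tau_N} \geq N$, gives $N\,\mathbb{P}_n(\tau_N\leq t) \leq \mathbb{E}_n[e^{C(t\wedge\tau_N)} L_{t\wedge\tau_N}] \leq n$, so $\mathbb{P}_n(\tau_N \leq t) \leq n/N \to 0$. This guarantees non-explosion of $L$ and in particular $\tau_N\to\infty$ $\mathbb{P}_n$-almost surely. Applying Fatou's lemma to the nonnegative sequence $e^{C(t\wedge\tau_N)}L_{t\wedge\tau_N}$ then yields $\mathbb{E}_n[e^{Ct} L_t] \leq n$, which rearranges to the desired $\mathbb{E}_n[L_t] \leq n e^{-Ct}$, completing the proof.

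The only real obstacle is the technical justification of Dynkin's formula for the unbounded function $g(m)=m$ when $L$ is not a priori known to be Feller, which is handled by the localization at $\tau_N$; once this is in place, the generator inequality from Lemma \ref{ineqgenerat} does all of the analytic work.
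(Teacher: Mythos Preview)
Your proof is correct and follows essentially the same strategy as the paper: localize to make Dynkin's formula rigorous, apply the generator bound $\Bc^L g(n)\leq -Cn$ from Lemma~\ref{ineqgenerat}, and pass to the limit. The only differences are cosmetic---the paper kills the process (sending it to $0$ upon exceeding level $a_0$) and then applies Gronwall to $t\mapsto\mathbb{E}_n[L_t\mathds{1}_{\{t<T_{a_0}\}}]$, whereas you stop the process and build the exponential factor $e^{Ct}$ into the test function; and the paper invokes conservativeness of $L$ from an external reference to justify $T_{a_0}\to\infty$, whereas you extract non-explosion directly from the bound $N\,\mathbb{P}_n(\tau_N\leq t)\leq n$, which is a nice self-contained touch.
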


\begin{proof} 
Since $\Pb_n(L_t\neq 0)=\Pb_n(L_t\geq 1)$, the first inequality in \eqref{bornesplt} follows from Markov inequality. To prove the second inequality, we would like to use Lemma \ref{ineqgenerat} and Dynkin's formula, however, since $L$ is not a Feller process in general, this requires some carefulness. Let us fix $a_0$ and set $T_{a_0}:=\inf \{t\geq 0, L_t>a_0\}$ (note that $T_{a_0}=0$ if $L_0>a_0$). Let $L^{a_0}$ be the process $L$ sent to the absorbing state $0$ upon leaving $[a_0]$; more precisely, $L^{a_0}_t:=L_t \mathds{1}_{t<T_{a_0}}$. Since $L^{a_0}$ is a Markov process on a finite set it is a Feller process and the function $g$ from Lemma \ref{ineqgenerat} (restricted to $[a_0]_0$) is in the domain of its generator $\Bc^{L,a_0}$. It is straightforward to see that $\Bc^{L,a_0}g(n)\leq \Bc^{L}g(n)$ for all $n \in [a_0]_0$. We thus get from Lemma \ref{ineqgenerat} that, for all $n \in [a_0]_0$, $\Bc^{L,a_0}g(n)\leq -C n$. We thus get from Dynkin's formula that, for $n \in [a_0]_0$ and $t\geq 0$, 
\begin{align*}
\mathbb{E}_n[g(L^{a_0}_t)] = g(n) + \int_0^t \mathbb{E}_n[\Bc^{L,a_0} g(L^{a_0}_s)] \dd s \leq g(n) - C \int_0^t \mathbb{E}_n[g(L^{a_0}_s)] \dd s
.\end{align*}
Thus, we have $\frac{\dd}{\dd t} \mathbb{E}_n[g(L^{a_0}_t)] \leq - C \mathbb{E}_n[g(L^{a_0}_t)]$, and hence, $\mathbb{E}_n[L_t \mathds{1}_{t<T_{a_0}}]=\mathbb{E}_n[g(L^{a_0}_t)] \leq g(n) e^{-Ct}=n e^{-Ct}$. Now fixing $t>0$ we see that, since $L$ is conservative by \cite[Lemma 4.1]{cordhumvech2024}, we have almost surely that $T_{a_0}>t$ for all large $a_0$ so the second inequality in \eqref{bornesplt} follows by monotone convergence. 
\end{proof}

\begin{lemme} \label{bornespltcasecdi}
Assume that $\sup_{n \geq 1} \mathbb{E}_n[L_1] < \infty$ and set $K\coloneqq e^C \sup_{n \geq 1} \mathbb{E}_n[L_1]$. Then, we have, for any $n \geq 1$ and $t \geq 1$,  
\begin{align}
\mathbb{P}_n(L_t \neq 0) \leq \mathbb{E}_n[L_t] \leq K e^{-Ct}. \label{borneunif}
\end{align}
\end{lemme}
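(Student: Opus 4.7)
The plan is to leverage Lemma~\ref{bornespltlemme} via the Markov property, exploiting the unit-time gap to trade the factor $n$ appearing in that bound for the uniform quantity $M\coloneqq\sup_{n\geq 1}\mathbb{E}_n[L_1]$. The first inequality in \eqref{borneunif} is just Markov's inequality applied to the nonnegative integer-valued random variable $L_t$, since $\{L_t\neq 0\}=\{L_t\geq 1\}$; so the real content lies in proving the second inequality.

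For $t\geq 1$, I would apply the Markov property of $L$ at time $1$ to write
\[
\mathbb{E}_n[L_t]=\mathbb{E}_n\!\left[\mathbb{E}_{L_1}[L_{t-1}]\right].
\]
Now invoke Lemma~\ref{bornespltlemme}: for every $k\in\mathbb{N}$ one has $\mathbb{E}_k[L_{t-1}]\leq k\,e^{-C(t-1)}$, while for $k=0$ the same inequality holds trivially since $0$ is absorbing for $L$. Substituting the random value $k=L_1$ and taking expectations gives
\[
\mathbb{E}_n[L_t]\leq e^{-C(t-1)}\,\mathbb{E}_n[L_1]\leq M\,e^{-C(t-1)}=e^{C}M\,e^{-Ct}=K\,e^{-Ct},
\]
which is the desired bound.

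The only delicate point is the use of the Markov property, since, as noted just before Lemma~\ref{ineqgenerat}, the process $L$ is not necessarily Feller. However, $L$ is a conservative continuous-time Markov chain on $\mathbb{N}_0$ by \cite[Lemma 4.1]{cordhumvech2024}, so the elementary Markov property at a deterministic time (which is all we need here) holds without any regularity assumption. The bound $\mathbb{E}_k[L_{t-1}]\leq k\,e^{-C(t-1)}$ from Lemma~\ref{bornespltlemme} is uniform in $k$, so the substitution of the random initial condition $L_1$ and the subsequent application of Fubini/tower-property are routine.
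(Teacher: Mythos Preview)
Your proof is correct and follows essentially the same approach as the paper: apply the Markov property at time $1$ and then use the bound $\mathbb{E}_k[L_{t-1}]\leq k\,e^{-C(t-1)}$ from Lemma~\ref{bornespltlemme}. Your additional remark justifying the use of the Markov property despite $L$ not being Feller is a nice clarification that the paper leaves implicit.
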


\begin{proof} 
The first inequality was already proved in Lemma \ref{bornespltlemme}. It remains to prove the second inequality in \eqref{borneunif}. Using the Markov property at time $1$ for the process $L$ and Lemma \ref{bornespltlemme}, we get that, for any $n \geq 1$ and $t \geq 1$, 
\begin{align*}
\mathbb{E}_n[L_t] = \mathbb{E}_n[\mathbb{E}_n[L_t|\mathcal{F}_1]] \leq \mathbb{E}_n[L_1] e^{-C(t-1)} \leq K e^{-Ct},
\end{align*}
which concludes the proof.
\end{proof}
\begin{proof}[Proof of Theorem \ref{thbounddiststa}]
The combination of \eqref{eqboundradon} from Theorem \ref{propbounddtv} with Lemma \ref{bornespltcasecdi} directly yields \eqref{finalboundradon}. 
Combining \eqref{eqboundwass} from Theorem \ref{propbounddtv} with Lemma \ref{bornespltlemme} we get that, for $x \in [0,1]$, $n\geq 1$ and $t \geq 0$ we have 
\begin{align}
W_1 (\rho_{x,t}^X, \rho^X_{\infty}) \leq \frac{1}{\sqrt{n}} + n e^{-Ct}. \label{majon}
\end{align}
Using this inequality with  $n\coloneqq\lfloor 2 e^{2Ct/3} \rfloor$, and noticing that $e^{2Ct/3} \leq n \leq 2e^{2Ct/3}$, we get \eqref{finalboundwass}. The $W_1$-uniform ergodicity of $X$ on $[0,1]$ follows. 

Now, let us assume that $\Lambda([0,1/2+\epsilon])>0$ for every $\epsilon>0$ and show that $X$ is open set recurrent. According to Corollary \ref{m2smallerthanm1} we have $\E_{e_2}[ U_\infty]<\E_{e_1}[ U_\infty]$, which combined with \eqref{expressionmoments} yields $$\int_{[0,1]}x^2\rho^X_{\infty}(\dd x)<\int_{[0,1]}x\rho^X_{\infty}(\dd x).$$ It follows that $\rho^X_{\infty}((0,1))>0$. 
We infer that $(0,1)$ contains a point of $Supp \ \rho^X_{\infty}$ (see section \ref{notations}). Proposition \ref{suppofmuinftyisrecunif} then implies that $\mathcal{R}(X) \cap (0,1) \neq \emptyset$. Hence, according to Theorem \ref{fullsupport}, the process $X$ is open set recurrent.
\end{proof}
The remainder of this section is devoted to the proof of Proposition \ref{propcond}, which states that \eqref{thecondition} implies the generic condition $\sup_{n}\Eb_n[L_1]<\infty$. 

\begin{proof}[Proof of Proposition \ref{propcond}]
Let $n_0 \geq 1$ and $\gamma\in(1,2)$ be as in Lemma \ref{ineqgenerat}. Let $K\coloneqq K(n_0,\gamma)>n_0$ to be appropriately chosen later. Note from Lemma \ref{bornespltlemme} that
\begin{equation*}
\sup_{n\leq K}\Eb_n[L_1]\leq c(K)<\infty,
\end{equation*}
where the constant $c(K)>0$ only depends on $K$. It remains to show that $\sup_{n >K}\Eb_n[L_1]\leq \tilde{c}(K)<\infty$ for some constant $\tilde{c}(K)>0$.
Let us fix $a_0>K$ and set $L^{a_0}$ and $\Bc^{L,a_0}$ as in the proof of Lemma \ref{bornespltlemme}. We define $\phi^{a_0}_n(t)\coloneqq \Eb_n[L^{a_0}_t]$, $t\geq 0$. We thus get from Dynkin's formula that, for $n \in [a_0]_0$ and $t\geq 0$, 
\begin{align*}
\frac{\dd}{\dd t} \phi^{a_0}_n(t)&=\Eb_n[\Bc^{L,a_0} g(L^{a_0}_t)]=\Eb_n[\Bc^{L,a_0} g(L^{a_0}_t) 1_{\{L^{a_0}_t \geq n_0\}}]+\Eb_n[\Bc^{L,a_0} g(L^{a_0}_t) 1_{\{L^{a_0}_t<n_0\}}].\end{align*}
We recall that $\Bc^{L,a_0}g(n)\leq \Bc^{L}g(n)$ for all $n \in [a_0]_0$. We thus get from Lemma \ref{ineqgenerat} that, for all integers $n \in [n_0,a_0]$, $\Bc^{L,a_0}g(n)\leq -n^{\gamma}$ and for all integers $n \in [n_0]_0$, $\Bc^{L,a_0}g(n)\leq -Cn$. Using this and Jensen's inequality, we obtain
\begin{equation*}
\frac{\dd}{\dd t} \phi^{a_0}_n(t)\leq -\phi^{a_0}_n(t)^{\gamma}+ \Eb_n[(-C L^{a_0}_t+(L^{a_0}_t)^\gamma)1_{\{L^{a_0}_t<n_0\}}],\qquad n \in [K,a_0].
\end{equation*}
Setting $c_1(n_0)\coloneqq |C|n_0 + n_0^{\gamma}$ we get
\begin{equation}\label{inteq2}
\frac{\dd}{\dd t} \phi^{a_0}_n(t) \leq -(\phi^{a_0}_n(t))^{\gamma}+ c_1(n_0),\qquad t\in[0,1], n \in [K,a_0].
\end{equation}
Let $\delta=(1+\gamma)/2\in(1,\gamma)$ and set $$K\coloneqq K(n_0,\gamma)\coloneqq\min\{k\geq n_0: k^{\gamma}-k^{\delta}>c_1(n_0)\}.$$
Note that if $x\geq K$, then $x^{\gamma}-x^{\delta}\geq K^{\gamma}-K^{\delta}>c_1(n_0)$. 
Let now $n> K$. Since $\phi^{a_0}_n(0)=n> K$, we deduce that $$t_{K,a_0}(n)\coloneqq\inf\{s\geq 0:\phi^{a_0}_n(s)\leq K\}\wedge 1>0.$$
Moreover, by construction we have, for $t\in[0,t_{K,a_0}(n))$ 
\begin{equation*}
\frac{\dd}{\dd t} \phi^{a_0}_n(t)\leq -(\phi^{a_0}_n(t))^{\delta}.
\end{equation*}
This yields
$$\phi^{a_0}_n(t)\leq {\left(\frac{1}{n^{\delta-1}}+(\delta-1)t\right)^{-\frac{1}{\delta-1}}},\qquad t\in[0,t_{K,a_0}(n)).$$
Now, we have two options, either $t_{K,a_0}(n)\geq 1$, in which case $\phi^{a_0}_n(1)\leq (\delta-1)^{-\frac{1}{\delta-1}}$, or $t_{K,a_0}(n)<1$, in which case $\phi^{a_0}_n(1)\leq K$ (because $\frac{\dd}{\dd t} \phi^{a_0}_n(\cdot)$ is negative on $(\phi^{a_0}_n)^{-1}(\{K\})$). We conclude that
$$\sup_{n \in [K,a_0]}\phi^{a_0}_n(1)\leq K\vee  (\delta-1)^{-\frac{1}{\delta-1}}.$$
Letting $a_0$ go to infinity the desired result follows. 
\end{proof}
\section{Proofs of results for processes with Siegmund dual} \label{siegmundsection}
This section is devoted to proving our main results for processes with a Siegmund dual. More precisely, we will prove Theorem \ref{genericsieg} and its applications to $\Lambda$-Wright--Fisher models without mutation given by Theorems \ref{speedexpoofcvtostationarity} and \ref{speedpolofcvtostationarity}.
\subsection{Distance to stationarity via Siegmund duality}
We start with the proof of Theorem \ref{genericsieg} about the distance to stationarity for $[0,1]$-valued processes with a Siegmund dual.

\begin{proof}[Proof of Theorem \ref{genericsieg}]
Recall that the Siegmund dual $Y$  of $X$ is assumed to be asymptotically absorbed in $\{0,1\}$. Therefore, for any $y \in [0,1]$, we have $\P_y$-almost surely that $Y_{\infty}\coloneqq \lim_{t\to\infty}Y_t$ exists and belongs to $\{0,1\}$. Thus, for all $x\in (0,1)$, 
\begin{align} \label{cvfctrep}
\P_x(X_t \geq y)=\P_y(x \geq Y_t)\xrightarrow[t\to\infty]{} \P_y(Y_{ \infty}=0).
\end{align}
In particular, the distribution function of $X_t$ under $\P_x$ converges as $t\to\infty$. Since $X$ takes values on $[0,1]$, we infer that $(X_t)_{t>0}$ is a tight family of random variables. Thus, there is a distribution $\rho^X_{\infty}$ on $[0,1]$ such that, for any $x \in (0,1)$, the law of $X_t$ under $\P_x$ converges weakly to $\rho^X_{\infty}$. Moreover we get from \eqref{cvfctrep} that, for any $y \in [0,1]$, we have 
\begin{align}
\rho^X_{\infty}([y,1])=\mathbb{P}_{y} (Y_{\infty}=0), \qquad \rho^X_{\infty}([0,y))=\mathbb{P}_y (Y_{\infty}=1). \label{stameasxtoy}
\end{align}
Combining this with the assumption that $Y$ is regularly absorbed in $\{0,1\}$, we get that $\rho^X_{\infty}(\{0,1\})=0$. 

Since $X$ satisfies Condition \ref{moncoup}, for any $x_1, x_2 \in [0,1]$, there is a $(x_1, x_2)$-monotone coupling $(X^1_t, X^2_t)_{t \geq 0}$ and we denote its law by $\mathbb{P}_{x_1,x_2}$. Let us fix $a\in (0,1/2)$ and $x \in [a,1-a]$. Thanks to the measurability requirement in Condition \ref{moncoup}, we can define the distribution $\mathbb{P}_{x,\rho^X_{\infty}}(\cdot):=\int_{[0,1]} \mathbb{P}_{x,z}(\cdot) \rho^X_{\infty}(\dd z)$. Since $\int_{[0,1]} \mathbb{P}_{z}(X_t \in \cdot) \rho^X_{\infty}(\dd z)=\rho^X_{\infty}(\cdot)$, the process $(X^1_t, X^2_t)$ with law $\mathbb{P}_{x,\rho^X_{\infty}}(\cdot)$ is a coupling with marginals $\rho^X_{x,t}$ and $\rho^X_{\infty}$. We conclude from \eqref{defwassdist} that 
\begin{align}
W_p(\rho^X_{x,t},\rho^X_{\infty})^p & \leq \int_{[0,1]} \mathbb{E}_{x,z} \left [ |X^2_t - X^1_t|^p \right ] \rho^X_{\infty}(\dd z) \nonumber \\
&\leq \int_{[0,1]} \mathbb{E}_{x\wedge z,x \vee z} \left [ (X^2_t)^p - (X^1_t)^p \right ] \rho^X_{\infty}(\dd z) \nonumber \\
& = \int_{[0,1]} \mathbb{E}_{x\wedge z,x \vee z} \left [ \int_{[0,1]} p u^{p-1} \mathds{1}_{\{X^1_t<u\leq X^2_t\}} \right ] \rho^X_{\infty}(\dd z) \nonumber \\
& = \int_{[0,1]} \int_{[0,1]} p u^{p-1} \mathbb{P}_{x\wedge z,x \vee z} \left ( X^1_t<u\leq X^2_t \right ) \rho^X_{\infty}(\dd z) \nonumber \\
& = \int_{[0,1]} \int_{[0,1]} p u^{p-1} \mathbb{P}_{u} \left ( x\wedge z <Y_t\leq x \vee z \right ) \rho^X_{\infty}(\dd z) \nonumber \\
& \leq \sup_{y \in [0,1]} \mathbb{P}_y \left ( Y_t\in [a,1-a] \right ) + \rho^X_{\infty}([0,a))+\rho^X_{\infty}([1-a,1]), \label{2trajx<absynew}
\end{align}
where, in the second-to-last line of the calculation, we used \eqref{doubleduality}. The combination of \eqref{2trajx<absynew} with \eqref{stameasxtoy} yields \eqref{decaywpsiegmund}. Then, the combination of \eqref{decaywpsiegmund} with the assumption that $Y$ is regularly and uniformly absorbed in $\{0,1\}$ implies that, for all $p\geq 1$, $X$ is locally $W_p$ uniformly ergodic in $(0,1)$. 
\end{proof}

\subsection{Distance to stationarity for $\Lambda$-Wright--Fischer processes without mutation in the coexistence regime}\label{estimsiegdual} Now we move to the applications of the Siegmund duality method to $\Lambda$-Wright--Fisher processes without mutation. More precisely, in this section we will provide the proofs of Theorems \ref{speedexpoofcvtostationarity} and \ref{speedpolofcvtostationarity}. 
 
Recall that here we assume that the parameters of the model satisfy \eqref{assumptionnomut}. In this setting, it was proved in \cite[Thm. 2.5]{cordhumvech2022} that $X$ is Siegmund dual to the solution of the SDE
\begin{align}\label{eq:SDEWFPdual}
\dd Y_t \  & = \int\limits_{(0,1)\times [0,1]}m(Y_{t-},r,u){N}(\dd t, \dd r,\dd u) + \int\limits_{(-1,1)} g(Y_{t-},r) S(\dd t, \dd r) -\sigma(Y_{t})Y_t(1-Y_t) \dd t,
\end{align}
where 
\begin{align*}
m(y,r,u) \coloneqq \mathsf{Median} \left \{ \frac{y-r}{1-r}, \frac{y}{1-r}, u \right \}-y, \ g(y,r) \coloneqq \frac{1+r - \sqrt{(1+r)^2 - 4ry}}{2r} - y, 
\end{align*}
Moreover, one can easily check using Theorem 2.3 in \cite{DaLi12} that the process $X$ satisfies Condition \ref{moncoup} (see Prop. A.3 in \cite{cordhumvech2022} for the details). Hence, we are in the framework of Theorem \ref{genericsieg}. Therefore, estimating the distance $W_p(\rho^X_{x,t}, \rho^X_{\infty})$ via the Siegmund dual process $Y$ involves two key steps. First, one must estimate the probability $\mathbb{P}_y(Y_t \in [a, 1 - a])$, which quantifies the likelihood that the dual process remains away from both boundaries at time $t$. Second, one must estimate the absorption probabilities of $Y$ when it starts near the boundaries. The first of these steps is addressed in the following result, originally proved in \cite{cordhumvech2022}.

\begin{prop} [\cite{cordhumvech2022}, Theorem 3.3]\label{thm:coexistenceY} Assume that \eqref{assumptionnomut} and \eqref{assumptionc0c1} hold. For any $\ell\in (0,C_0(\Lambda,\mu,\sigma) \wedge C_1(\Lambda,\mu,\sigma))$ and $y\in (0,1)$ we have $\mathbb{P}_y$-almost surely that either a) $Y_t \in [0,e^{-\ell t}]$ for all large $t$, or 
 b) $Y_t \in [1-e^{-\ell t},1]$ for all large $t$. Moreover, 
	
	\begin{enumerate}
		\item[(i)] (polynomial decay)  If \eqref{weakhyp} holds for some $\gamma>0$, then for any {$\ell\in (0,C_0(\Lambda,\mu,\sigma) \wedge C_1(\Lambda,\mu,\sigma))$ }and $\alpha\in (0,\gamma)$ there is $K=K(\alpha,\ell)$ such that for any $t\geq 0$ we have 
        \begin{align}
			\sup_{y \in [0,1]} \P_y(Y_t\in [e^{-\ell t},1-e^{-\ell t}])\leq Kt^{-\alpha}. \label{yinpoldecay}
		\end{align}
		\item[(ii)] (exponential decay) If \eqref{stronghyp} holds for some $\gamma>0$, then for any $\ell\in (0,C_0(\Lambda,\mu,\sigma) \wedge C_1(\Lambda,\mu,\sigma))$ there are positive constants $K_1=K_1(\ell), K_2=K_2(\ell)$ such that for any $t\geq 0$ we have \begin{align}
			\sup_{y \in [0,1]} \P_y(Y_t\in [e^{-\ell t},1-e^{-\ell t}])\leq K_1e^{-K_2t}. \label{yinexpodecay}
		\end{align}
		
	\end{enumerate}
\end{prop}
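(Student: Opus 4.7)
The strategy is to study the solution $Y$ of \eqref{eq:SDEWFPdual} near the boundaries $\{0,1\}$ via logarithmic transformations, and to show that $-\log Y_t$ and $-\log(1-Y_t)$ behave, while $Y_t$ is close to $0$ (resp.\ $1-Y_t$ is close to $0$), like drifted L\'evy processes with positive drift rates that are exactly $C_0$ and $C_1$. Concretely, I would apply It\^o's formula to $Z_t \coloneqq -\log Y_t$ along \eqref{eq:SDEWFPdual}, using the expansions $g(y,r)\approx -yr/(1+r)$ as $y\downarrow 0$, together with the jump law of $m(y,r,u)$ on the rare event that $u<y/(1-r)$ versus the typical event $u\geq y/(1-r)$ (which sends $y\mapsto y/(1-r)$). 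After martingale decomposition, a careful bookkeeping of the three Poisson integrals, their intensities, and the selection term $-\sigma(Y_t)Y_t(1-Y_t)\,\dd t$ should produce a drift constant equal to $-C_0$; a symmetric computation near the right boundary yields drift $-C_1$ for $\tilde Z_t \coloneqq -\log(1-Y_t)$. This identification is the conceptual core of the argument.

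For the almost-sure dichotomy, I would first show that $Y$ is a.s.\ absorbed in $\{0,1\}$: under \eqref{assumptionc0c1} the function $y\mapsto y(1-y)$ (or a suitable fractional power of it) is a strict supermartingale-type function, which forces absorption. Conditioning on $\{Y_\infty=0\}$, the L\'evy comparison from the first paragraph combined with the strong law of large numbers for L\'evy processes gives $\liminf_{t\to\infty} Z_t/t \geq C_0$ a.s.; symmetrically, $\liminf_{t\to\infty} \tilde Z_t/t \geq C_1$ a.s.\ on $\{Y_\infty=1\}$. This delivers the claimed dichotomy for any $\ell<C_0\wedge C_1$.

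For the quantitative bounds in (i)-(ii), I would use a Lyapunov function of the form $V_\alpha(y) \coloneqq (y(1-y))^\alpha$ with $\alpha>0$ small, and aim to establish $\Ls V_\alpha(y) \leq -\lambda(\alpha) V_\alpha(y) + R_\alpha(y)$, where $\Ls$ is the generator of $Y$, $\lambda(\alpha)/\alpha \to C_0\wedge C_1$ as $\alpha\downarrow 0$, and $R_\alpha$ is a boundary remainder. Under \eqref{stronghyp}, the jump contribution to $\Ls V_\alpha$ has a finite exponential moment for all sufficiently small $\alpha$, so $R_\alpha$ is bounded and Gronwall yields $\sup_y \E_y[V_\alpha(Y_t)] \leq K e^{-\lambda(\alpha) t}$. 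Since $\{Y_t\in[e^{-\ell t},1-e^{-\ell t}]\}$ implies $V_\alpha(Y_t) \geq c\,e^{-\alpha\ell t}$, Markov's inequality delivers \eqref{yinexpodecay} for any $\ell<\lambda(\alpha)/\alpha$, which can be made arbitrarily close to $C_0\wedge C_1$ by shrinking $\alpha$. Under the weaker \eqref{weakhyp}, only moments of order $1+\gamma$ of $\log(1/y)$ (resp.\ $\log(1/(1-y))$) are controlled, so I would switch to the sublinear Lyapunov function $W(y)\coloneqq (\log(1/y))^{1+\gamma}+(\log(1/(1-y)))^{1+\gamma}$, prove a polynomial-type inequality $\Ls W \leq -c\,W^{\theta}$ with $\theta=(1+\gamma-\delta)/(1+\gamma)<1$, and integrate it to obtain $\sup_y\E_y[W(Y_t)^{-1}]$-type estimates delivering \eqref{yinpoldecay} by Markov's inequality.

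The main obstacle is the delicate handling of large jumps: since $m(y,r,u)$ and $g(y,r)$ can displace a small $y$ to a value of order $1$ when $r$ is close to $1$, the L\'evy approximations of $Z$ and $\tilde Z$ have heavy-tailed jumps, and recovering the sharp constants $C_0,C_1$ demands exact cancellations between the three Poisson integrals, their compensators, and the selection drift. The hypotheses \eqref{weakhyp} and \eqref{stronghyp} are precisely the integrability conditions required to obtain the finite log-moments (resp.\ exponential moments) that translate the L\'evy drift analysis into the announced rates.
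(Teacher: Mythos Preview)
The paper does not prove this proposition: it is quoted verbatim from \cite{cordhumvech2022}, Theorem~3.3, and used as a black box. So there is no ``paper's own proof'' to compare against. What the paper \emph{does} supply are the L\'evy comparison tools (Lemma~\ref{lem:levysandwich} and Lemma~\ref{lem:existence_suit_approx}), and these align with your first paragraph: the reduction of the near-boundary behaviour of $Y$ to that of an explicit L\'evy process $L^b$ with $\E[L_1^b]>0$ is indeed the backbone of the argument in \cite{cordhumvech2022}. Your identification of the drift constants $C_0,C_1$ and the LLN-based dichotomy are the correct ideas.

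For the quantitative bounds, however, your Lyapunov scheme has a problem in case~(i). You propose $W(y)=(\log(1/y))^{1+\gamma}+(\log(1/(1-y)))^{1+\gamma}$ and the inequality $\Ls W\leq -cW^\theta$. But $W$ blows up at the boundaries and $Y$ is being \emph{absorbed} there, so along trajectories $W(Y_t)$ grows roughly like $(C_0\wedge C_1)^{1+\gamma}t^{1+\gamma}$; the drift of $W$ is positive, not negative, and no inequality of the form $\Ls W\leq -cW^\theta$ can hold. Moreover, the event $\{Y_t\in[e^{-\ell t},1-e^{-\ell t}]\}$ corresponds to $W(Y_t)$ being \emph{small} (bounded by $(\ell t)^{1+\gamma}$), so Markov's inequality applied to $W$ goes the wrong way. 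The natural route --- and the one implicit in the paper's use of Lemma~\ref{lem:levysandwich} and the L\'evy-process moment lemmas in Appendix~\ref{factslp} --- is to stay with the comparison $Y_t\leq Y_0e^{-L_t^b}$, translate the event $\{Y_t\geq e^{-\ell t}\}$ into a deviation event $\{L_t^b\leq \ell t+\text{const}\}$ for a L\'evy process with mean $>\ell$, and then invoke polynomial or exponential large-deviation bounds for $L^b$ according to whether \eqref{weakhyp} or \eqref{stronghyp} holds. Your Lyapunov function $V_\alpha(y)=(y(1-y))^\alpha$ for case~(ii) is a workable alternative (it is essentially the Laplace transform of the L\'evy comparison), but for case~(i) you should abandon the $W$-function and work directly with moment estimates on $L^b_t-\E[L^b_t]$ of order $1+\gamma$.
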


For the second step, we now prove the following proposition that provides estimates for the absorption probabilities of the Siegmund dual $Y$. 
\begin{prop} \label{queueexpopix}
Assume that \eqref{assumptionnomut} and \eqref{assumptionc0c1} are satisfied. 
\begin{enumerate}
\item[(i)] (weak regularity) If \eqref{weakhyp} holds for some $\gamma>0$, then for any $\alpha \in (0,(1+\gamma)\gamma/(2\gamma+1))$ there is a positive constant $K=K(\alpha)$ such that, for any $y \in (0,1/2)$, we have 
\begin{align}
\mathbb{P}_y (Y_{\infty}=1)+\mathbb{P}_{1-y} (Y_{\infty}=0) \leq K (\log(1/y))^{-\alpha}. \label{tailpisoushypmomentspol}
\end{align}
\item[(ii)] (strong regularity) If \eqref{stronghyp} holds for some $\gamma>0$, then there are positive constants $K$ and $\alpha$ such that, for any $y \in (0,1/2)$, we have 
\begin{align}
\mathbb{P}_y (Y_{\infty}=1)+\mathbb{P}_{1-y} (Y_{\infty}=0) \leq K y^{\alpha}. \label{tailpisoushypmomentsexpo}
\end{align}
\end{enumerate}
\end{prop}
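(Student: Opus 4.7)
The plan is to bound $\mathbb{P}_y(Y_\infty = 1)$ (and symmetrically $\mathbb{P}_{1-y}(Y_\infty = 0)$) via a hitting-time estimate for a L\'evy process obtained from a logarithmic transformation of $Y$ near the boundary $0$ (resp. $1$). Since $\{Y_\infty = 1\} \subseteq \{\tau^Y_{1/2} < \infty\}$ with $\tau^Y_{1/2} \coloneqq \inf\{t : Y_t \geq 1/2\}$, setting $Z_t \coloneqq \log(1/Y_t)$ reduces the first inequality to bounding the probability that $Z$, started at $z \coloneqq \log(1/y)$ large, ever descends to $\log 2$. The bound on $\mathbb{P}_{1-y}(Y_\infty = 0)$ is proved analogously by considering $\tilde Z_t \coloneqq \log(1/(1-Y_t))$, with $C_1(\Lambda,\mu,\sigma)$ playing the role of $C_0(\Lambda,\mu,\sigma)$.

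For $Y$ small, the dynamics \eqref{eq:SDEWFPdual} linearise to produce an approximately multiplicative behaviour of $Y$: the continuous drift contributes $\sigma(0)\, \dd t$ to $Z$, the $S$-jumps contribute jumps of size $\approx \log(1+r)$, and the dominant part of the $m$-jumps contributes jumps of size $\approx \log(1-r)$. A quick compensator computation then gives that the first-order drift of $Z$ is essentially $C_0(\Lambda,\mu,\sigma)$, which is positive by \eqref{assumptionc0c1}. The next step is to construct a L\'evy process $\hat Z$ with positive drift arbitrarily close to $C_0(\Lambda,\mu,\sigma)$ that pathwise minorises $Z_{t \wedge \tau^Y_{1/2}}$ (up to a shift). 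Crucially, the negative jumps of $\hat Z$ inherit the moment structure of $\Lambda$ and $\mu$ through the maps $r \mapsto \log(1/(1-r))$ and $r \mapsto \log(1/(1+r))$: exponential moments under \eqref{stronghyp}, and finite $(1+\gamma)$-th moments under \eqref{weakhyp}. This comparison should build on the L\'evy-process analysis carried out for Proposition \ref{thm:coexistenceY} in \cite{cordhumvech2022} and on the auxiliary estimates collected in Appendix \ref{factslp}.

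With such a comparison in hand, the problem reduces to the classical fluctuation estimate for $\mathbb{P}(\inf_t \hat Z_t \leq \log 2 \mid \hat Z_0 = z)$ as $z \to \infty$, which is the all-time-supremum tail of the L\'evy process $z - \hat Z$ with negative drift. Under \eqref{stronghyp}, a Cram\'er--Lundberg inequality yields an exponential bound $K e^{-\alpha z}$, which upon substituting $z = \log(1/y)$ gives \eqref{tailpisoushypmomentsexpo}. Under \eqref{weakhyp}, a heavy-tail estimate for the supremum of a L\'evy process (of the Pakes--Veraverbeke type, as developed in \cite{cordhumvech2022} and Appendix \ref{factslp}) yields a polynomial decay $K z^{-\alpha}$ for any $\alpha < (1+\gamma)\gamma/(2\gamma+1)$, translating into \eqref{tailpisoushypmomentspol} via the same substitution. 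The restricted range of $\alpha$, below the naive threshold $\gamma$, accounts for the quantitative loss incurred in the comparison between $Z$ and $\hat Z$ and in transferring the $(1+\gamma)$-moment assumption on jumps to a tail bound on the supremum.

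The main obstacle will be making the L\'evy comparison sharp enough to preserve the stated exponents. The jump dynamics of $Y$ are genuinely state-dependent: the ``uniform'' component of the $m$-jumps sends $Y$ into $[0, Y/(1-r)]$, whose range contracts with $Y$, so $Z$ is not itself a L\'evy process; it only approaches one asymptotically as $z \to \infty$. I expect this to be handled by a stopping-time decomposition (work up to the first excursion of $Y$ above a small threshold $\eta$, use the L\'evy approximation there, and iterate via the strong Markov property), combined with a direct verification that the moment transfers $r \mapsto \log(1/(1\pm r))$ convert \eqref{stronghyp} and \eqref{weakhyp} into the exact hypotheses required for the Cram\'er and heavy-tail estimates, respectively.
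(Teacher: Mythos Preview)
Your proposal is correct and follows essentially the same route as the paper. Two minor simplifications are worth noting. First, the L\'evy comparison you anticipate is already available as Lemma~\ref{lem:levysandwich} (imported from \cite{cordhumvech2022}): it gives $Y_t \leq Y_0\,e^{-L_t^b}$ for all $t\in[0,T_Y(e^{-b},1]]$, so a single application of $\{Y_\infty=1\}\subset\{\sup_s Y_s>e^{-b}\}\subset\{\inf_s L_s^b<\log(e^b y)\}$ suffices and no stopping-time iteration is required. Second, the exponent restriction $\alpha<(1+\gamma)\gamma/(2\gamma+1)$ comes entirely from the supremum moment estimate for a L\'evy process with $(1+\gamma)$-integrable jumps (Lemma~\ref{polmomentvaluesup}), not from any loss in the comparison step.
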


In order to prove Proposition \ref{queueexpopix} we use the comparison results from \cite{cordhumvech2022} between $Y$ and functions of L\'evy processes near the boundaries. For $b\geq \log(2)$, define the L\'evy processes $L^b=(L_t^b)_{t\geq 0}$ via
\begin{align}
L_t^b & \coloneqq \!\int\limits_{[0,t]\times(0,1)^2}\!\!\!\!\!\!\!\!\!\!\log(1-r)N(\dd s,\dd r,\dd u) + \frac{1}{2}\int\limits_{[0,t]\times(-1,1)}\!\!\!\!\!\!\!\!\!\!\log( (1+r)^2-4re^{-b}\mathds{1}_{r>0} ) S(\dd s, \dd r)\nonumber \\
	& \qquad +t\big(\sigma(0)-e^{-b}\lVert \sigma\rVert_{\mathcal{C}^1([0,1])}\big)\label{eq:levylower}. 
\end{align}
The L\'evy process $L^b$ is well-defined under the assumption \eqref{assumptionnomut}, see \cite[Lemma 4.1]
{cordhumvech2022}. For a measurable set $A\subset[0,1]$, we set $T_YA\coloneqq\inf\{s\geq 0: Y_s\in A\}$. The comparison result can now be stated as follows. 
\begin{lemme}[\cite{cordhumvech2022}, Lemma 4.2]\label{lem:levysandwich}
	Assume \eqref{assumptionnomut} and fix $b\geq \log(2)$ and $Y_0\in (0,e^{-b}]$. 
	Almost surely, for all $t\in [0,T_{Y}(e^{-b},1]]$, \begin{equation}
		Y_t \leq Y_0 e^{-L_t^b}.\label{eq:comparison}
	\end{equation}
\end{lemme}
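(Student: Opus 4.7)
The plan is to show pathwise that $\log Y_t + L_t^b \leq \log Y_0$ for all $t\in [0,T_{Y}(e^{-b},1]]$. Since $Y$ solves the pure-jump SDE~\eqref{eq:SDEWFPdual} and $L^b$ has the explicit semimartingale decomposition \eqref{eq:levylower}, the approach is to apply It\^o's formula to $\log Y_t$ and compare, term-by-term, each contribution with the corresponding contribution in $-L_t^b$: drift against drift, $N$-jump against the $N$-part of $L^b$, and $S$-jump against the $S$-part of $L^b$. All of these comparisons only require the bound $Y_{t-}\leq e^{-b}$, which is precisely what is available on the interval of interest.

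For the drift, the It\^o contribution from $-\sigma(Y_t)Y_t(1-Y_t)\,\dd t$ produces a rate $-\sigma(Y_t)(1-Y_t)$ for $\dd\log Y_t$. Writing $\sigma(0) - \sigma(Y_t)(1-Y_t) = (\sigma(0) - \sigma(Y_t)) + \sigma(Y_t)Y_t$, the first summand is bounded by $\lVert\sigma'\rVert_\infty Y_t$ by the mean value theorem and the second by $\lVert\sigma\rVert_\infty Y_t$, so the sum is at most $e^{-b}\lVert\sigma\rVert_{\mathcal{C}^1([0,1])}$, matching the deterministic drift in $-L_t^b$. For a jump of $N$ at $(r,u)\in(0,1)^2$, since $(Y_{s-}-r)/(1-r) \leq Y_{s-}/(1-r)$ and $u\leq 1$, the median defining $Y_s$ is at most $Y_{s-}/(1-r)$, hence $\log(Y_s/Y_{s-}) \leq -\log(1-r)$, which is exactly the $N$-contribution to $-L_s^b$.

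The $S$-jumps are the key step. Starting from $Y_s = (1+r-\sqrt{(1+r)^2-4rY_{s-}})/(2r)$, the quadratic $rY_s^2 - (1+r)Y_s + Y_{s-} = 0$ that $Y_s$ satisfies rearranges to the compact identity $Y_s/Y_{s-} = 1/(1+r(1-Y_s))$. For $r<0$, since $Y_s\in[0,1]$, one directly obtains $Y_s/Y_{s-} \leq 1/(1+r)$, so $\log(Y_s/Y_{s-}) \leq -\tfrac12\log((1+r)^2)$, matching the $S$-contribution to $-L_s^b$ when $r<0$. For $r>0$, the same identity forces $Y_s\leq Y_{s-}\leq e^{-b}$; expanding gives
\[
(1+r(1-Y_s))^2 - \bigl((1+r)^2 - 4re^{-b}\bigr) = 4re^{-b} - 2r(1+r)Y_s + r^2 Y_s^2,
\]
and the crude estimate $2(1+r)Y_s \leq 4Y_s \leq 4e^{-b}$ (using $r\leq 1$ and $Y_s\leq e^{-b}\leq 1/2$, which relies on $b\geq\log 2$) shows this expression is non-negative, so $\log(Y_s/Y_{s-}) \leq -\tfrac12\log((1+r)^2 - 4re^{-b})$.

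The main obstacle I expect is precisely this last algebraic check for $r>0$: the coefficient $4re^{-b}$ inside the logarithm in the definition of $L_t^b$ is tuned so that the elementary estimate closes, and the threshold $b\geq\log 2$ enters here. Once the drift, $N$-jump, and $S$-jump comparisons are established pathwise, summing them shows that $\log Y_t + L_t^b - \log Y_0$ is non-increasing on $[0,T_Y(e^{-b},1]]$, which yields \eqref{eq:comparison}. The closed right endpoint causes no difficulty: each jump comparison only requires $Y_{t-}\leq e^{-b}$ and therefore remains valid even at the terminal jump time, which might push $Y$ above $e^{-b}$.
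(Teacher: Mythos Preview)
The paper does not prove this lemma; it is imported verbatim from \cite{cordhumvech2022} (Lemma~4.2 there) and stated without argument. Your pathwise comparison of $\log Y_t$ against $-L_t^b$ via It\^o's formula, handling drift, $N$-jumps and $S$-jumps separately, is correct and is the natural route; it is essentially the argument one would expect to find in the cited reference.

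One small inaccuracy: the hypothesis $b\geq\log 2$ is \emph{not} actually needed in your $r>0$ step. After expanding $(1+r(1-Y_s))^2-((1+r)^2-4re^{-b})=4re^{-b}-2r(1+r)Y_s+r^2Y_s^2$ and using $Y_s\leq e^{-b}$, the expression evaluated at $Y_s=e^{-b}$ equals $re^{-b}\bigl(2(1-r)+re^{-b}\bigr)$, which is positive for all $r\in(0,1)$ and $b>0$; since the quadratic in $Y_s$ is decreasing on $[0,1]$, this suffices. The restriction $b\geq\log 2$ in the statement is a convenient normalization (e.g.\ ensuring $e^{-b}\leq 1/2$ so that the comparison interval is genuinely a neighbourhood of~$0$, and keeping the argument of the logarithm in $L^b$ comfortably positive), rather than an ingredient of the inequality itself.
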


We also need some properties of the L\'evy processes $L^b$ that are gathered in the following lemma, proved in \cite{cordhumvech2022}. 
\begin{lemme}\label{lem:existence_suit_approx}
	Assume that \eqref{assumptionnomut} holds and that  $C_0(\Lambda,\mu,\sigma)>0$. Then there exists $\beta \geq \log(2)$ such that for all $b\geq \beta$, $\E[L_1^b]$ is well-defined and $\E[L_1^b]>0$. In particular, $L^{b}$ drifts to $\infty$ so $\inf_{s \geq 0}L_s^b>-\infty$. If moreover we assume that \eqref{stronghyp} is satisfied for some $\gamma>0$ then $\E[e^{\lambda L_1^{b}}]<\infty$ for all $\lambda\in [-\gamma,\infty)$. 
\end{lemme}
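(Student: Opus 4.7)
The plan is to analyse $\E[L_1^b]$ via Campbell's formula, identify its $b\to\infty$ limit with $C_0(\Lambda,\mu,\sigma)$, and then establish the exponential moments via the L\'evy--Khintchine exponent of $L^b$.

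First I would verify that $L^b$ is a well-defined L\'evy process for $b\geq\log 2$. The only delicate point is positivity of $(1+r)^2-4re^{-b}\mathds{1}_{r>0}$ inside the logarithm: for $r\leq 0$ this equals $(1+r)^2$, and for $r\in(0,1)$ with $b\geq\log 2$ one has $(1+r)^2-4re^{-b}\geq (1+r)^2-2r=1+r^2>0$. Next, Campbell's formula applied to the two Poisson integrals in \eqref{eq:levylower} yields
\begin{align*}
\E[L_1^b] &= \int_{(0,1)}\log(1-r)\,\frac{\Lambda(\dd r)}{r^2}+\frac12\int_{(-1,1)}\log\bigl((1+r)^2-4re^{-b}\mathds{1}_{r>0}\bigr)\,\frac{\mu(\dd r)}{|r|}\\
&\qquad+\sigma(0)-e^{-b}\lVert\sigma\rVert_{\Cs^1([0,1])}.
\end{align*}
The dust condition $\int r^{-1}\Lambda(\dd r)<\infty$ from \eqref{assumptionnomut} handles the first integrand near $r=0$ (since $|\log(1-r)|\sim r$), and the negative tail at $r\to 1$ is finite precisely because $C_0(\Lambda,\mu,\sigma)$ is; the $\mu$-integral is controlled analogously. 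Letting $b\to\infty$ and invoking dominated convergence (with the $b=\log 2$ value as dominant) gives $\E[L_1^b]\to C_0(\Lambda,\mu,\sigma)>0$, so some $\beta\geq\log 2$ satisfies $\E[L_1^b]>0$ for all $b\geq\beta$. The strong law of large numbers for L\'evy processes then yields $t^{-1}L_t^b\to\E[L_1^b]>0$ almost surely, forcing drift to $+\infty$ and hence $\inf_{s\geq 0}L_s^b>-\infty$ a.s.

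For the exponential moment claim under \eqref{stronghyp}, I would write the L\'evy--Khintchine exponent of $L^b$ as
\begin{align*}
\psi^b(\lambda) &= \int_{(0,1)}\bigl((1-r)^\lambda-1\bigr)\,\frac{\Lambda(\dd r)}{r^2} + \int_{(-1,1)}\bigl[\bigl((1+r)^2-4re^{-b}\mathds{1}_{r>0}\bigr)^{\lambda/2}-1\bigr]\,\frac{\mu(\dd r)}{|r|}\\
&\qquad+\lambda\bigl(\sigma(0)-e^{-b}\lVert\sigma\rVert_{\Cs^1([0,1])}\bigr),
\end{align*}
and verify $\psi^b(\lambda)<\infty$ for $\lambda\in[-\gamma,\infty)$. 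For $\lambda\geq 0$ the jumps are bounded above (since $(1-r)^\lambda\leq 1$ and $(1+r)^2-4re^{-b}\leq 4$), so the integrals reduce to integrability near $r=0$, again supplied by the dust condition. For $\lambda\in[-\gamma,0)$, the dangerous regions are $r\to 1$ in both integrals and $r\to -1$ in the $\mu$-integral, where the integrands behave like $(1-r)^\lambda r^{-2}\Lambda(\dd r)$ and $|1+r|^\lambda|r|^{-1}\mu(\dd r)$; these are integrable precisely by \eqref{stronghyp}.

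The hard part will be this last step: splitting each integral into a near-boundary piece (controlled by \eqref{stronghyp}) and an interior/near-zero piece (controlled by \eqref{assumptionnomut}), and justifying that the compound-Poisson-plus-drift structure of $L^b$ (whose small jumps are absolutely summable thanks to the dust condition) allows $\psi^b(\lambda)$ to be computed without compensation terms.
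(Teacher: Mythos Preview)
Your proposal is correct and supplies the argument that the paper omits entirely: the paper's proof consists of two bare citations to \cite[Lemma~4.8]{cordhumvech2022} and \cite[Lemma~4.4]{cordhumvech2022}, so your direct computation via Campbell's formula for $\E[L_1^b]$, dominated convergence as $b\to\infty$, the strong law of large numbers for L\'evy processes, and verification of finiteness of the Laplace exponent under \eqref{stronghyp} is exactly the content those external lemmas establish.

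One small point to watch: when you identify the $b\to\infty$ limit of $\E[L_1^b]$ with $C_0(\Lambda,\mu,\sigma)$, the $\mu$-integral in your Campbell computation carries the factor $|r|^{-1}$ coming from the intensity of $S$, whereas the displayed definition of $C_0$ in this paper has $\mu(\dd r)$ without that factor. This is almost certainly a transcription issue between this paper and \cite{cordhumvech2022} rather than a flaw in your reasoning; the structural argument (limit exists, is positive by hypothesis, hence $\E[L_1^b]>0$ eventually) is unaffected, but you should reconcile the two conventions before writing a final version.
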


\begin{proof}
The first and second part follows from respectively \cite[Lemma 4.8]{cordhumvech2022} and \cite[Lemma 4.4]{cordhumvech2022}. 
\end{proof}

We now have all the ingredients to prove Proposition \ref{queueexpopix}. 

\begin{proof} [Proof of Proposition \ref{queueexpopix}]
Let us fix $b$ as in Lemma \ref{lem:existence_suit_approx}. For $y \in (0,e^{-b})$, using Lemma \ref{lem:levysandwich} we get 
\begin{align}
\mathbb{P}_y(Y_{\infty}=1) \leq \mathbb{P}_y \left (\sup_{s \geq 0} Y_s > e^{-b} \right ) \leq \mathbb{P} \left (\sup_{s \geq 0} y e^{-L_s^b} > e^{-b} \right ) = \mathbb{P} \left (\inf_{s \geq 0} L^{b}_s < \log e^b y \right ). \label{relationpixlbeta}
\end{align}
By the choice of $b$ and Lemma \ref{lem:existence_suit_approx}, the random variable $\inf_{s \geq 0} \tilde L^{b,\eta}_s$ is well-defined and finite. 

Let us now assume that \eqref{stronghyp} is satisfied for some $\gamma>0$ and prove \eqref{tailpisoushypmomentsexpo}. By Lemma \ref{lem:existence_suit_approx} the Laplace transform of $L^{b}_1$ is well-defined on $[-\gamma,\infty)$. We define the Laplace exponent $\psi_{b}(\cdot)$ by $\psi_{b}(\lambda)\coloneqq\log(\E[e^{\lambda L_1^{b}}])$ and note from the lemma that $\psi_{b}'(0)=\mathbb{E}[L^{b}_1]>0$. As a consequence, for all $\lambda \in (0,\gamma)$ small enough we have $\psi_{b}(-\lambda) < 0$. We thus get that $-L^{b}$ satisfies the requirements of Lemma \ref{expomomentsup} so $\mathbb{E}[e^{-c \inf_{s \geq 0} L^{b}_s}]<\infty$ for some $c>0$. Combining this with \eqref{relationpixlbeta} and Chernoff inequality we get $\mathbb{P}_y(Y_{\infty}=1) \leq \mathbb{E}[e^{-c \inf_{s \geq 0} \tilde L^{b,\eta}_s}] e^{cb} y^c$. We deduce the existence of constants $c_1, \alpha_1>0$ such that for any $y \in (0,1)$, $\mathbb{P}_y(Y_{\infty}=1) \leq c_1 y^{\alpha_1}$. Now applying the same proof to the process $\hat Y \coloneqq 1-Y$ yields the existence of constants $c_2, \alpha_2 > 0$ such that for any $y \in (0,1)$, $\mathbb{P}_{1-y} (Y_{\infty}=0) \leq c_2 y^{\alpha_2}$. This entails \eqref{tailpisoushypmomentsexpo}. 

Let us now assume that \eqref{weakhyp} is satisfied for some $\gamma>0$ and prove \eqref{tailpisoushypmomentspol}. The condition \eqref{weakhyp} ensures that $-L^{b}$ satisfies the requirements of Lemma \ref{polmomentvaluesup} with $m=1+\gamma$ so $\mathbb{E}[(-\inf_{s \geq 0} L^{b}_s)^{\alpha}]<\infty$ for every $\alpha \in (0,(1+\gamma)\gamma/(2\gamma+1))$. Using \eqref{relationpixlbeta} and Markov inequality we get $\mathbb{P}_y(Y_{\infty}=1) \leq \mathbb{E}[(- \inf_{s \geq 0} \tilde L^{b,\eta}_s)^{\alpha}] (\log(e^{-b}/y))^{-\alpha}$. We deduce the existence of a constant $C_1 > 0$ such that for any $y \in (0,1)$, $\mathbb{P}_y(Y_{\infty}=1) \leq C_1 (\log(1/y))^{-\alpha}$. Now applying the same proof to the process $\hat Y \coloneqq 1-Y$ yields the existence of a constant $C_2 > 0$ such that for any $y \in (0,1)$, $\mathbb{P}_{1-y} (Y_{\infty}=0) \leq C_2 (\log(1/y))^{-\alpha}$. This entails \eqref{tailpisoushypmomentspol}. 
\end{proof}

\subsection{Proof of main result} \label{proofmainresult}

We now combine Theorem \ref{genericsieg} with the estimates from Section \ref{estimsiegdual} to prove Theorems \ref{speedexpoofcvtostationarity} and \ref{speedpolofcvtostationarity}. 

\begin{proof}[Proof of Theorem \ref{speedexpoofcvtostationarity}]

We now assume that \eqref{stronghyp} is satisfied for some $\gamma>0$. Let us fix a number $\ell\in (0,C_0(\Lambda,\mu,\sigma) \wedge C_1(\Lambda,\mu,\sigma))$. By \eqref{yinexpodecay} from Proposition \ref{thm:coexistenceY} we see that $Y$ is uniformly absorbed in $\{0,1\}$ and by \eqref{tailpisoushypmomentsexpo} from Proposition \ref{queueexpopix} we see that $Y$ is regularly absorbed in $\{0,1\}$. By Theorem \ref{genericsieg} we get that, for any $p \geq 1$, $X$ is locally $W_p$-uniformly ergodic on $(0,1)$ and that $\rho^X_{\infty}(\{0,1\})=0$ (where we have denoted by $\rho^X_{\infty}$ the stationary distribution of $X$). We thus have $\rho^X_{\infty}((0,1))=\rho^X_{\infty}([0,1])=1>0$, so we can proceed as in the proof of Theorem \ref{thbounddiststa} (but using Proposition \ref{suppofmuinftyisrec} instead of Proposition \ref{suppofmuinftyisrecunif}) to prove the claim about open set recurrence. 

We are now left to prove \eqref{boundexpowasserstein}. Let us fix $p \geq 1$ and $\epsilon \in (0,1/2)$. Applying \eqref{decaywpsiegmund} from Theorem \ref{genericsieg} with the choice $a\coloneqq e^{-\ell t}$ and combining with \eqref{yinexpodecay} from Proposition \ref{thm:coexistenceY} and \eqref{tailpisoushypmomentsexpo} from Proposition \ref{queueexpopix} we get that, for all $x \in [\epsilon,1-\epsilon]$ and $t \geq \log(1/\epsilon)/\ell$, 
\begin{align*}
W_p(\rho^X_{x,t},\rho^X_{\infty})^p & \leq \sup_{y \in [0,1]} \mathbb{P}_y \left ( Y_t\in [e^{-\ell t},1-e^{-\ell t}] \right ) + \mathbb{P}_{e^{-\ell t}} (Y_{\infty}=1) + \mathbb{P}_{1-e^{-\ell t}} (Y_{\infty}=0) \\
& \leq K_1e^{-K_2t} + K e^{-\ell \alpha t}. 
\end{align*}
Since $W_p(\rho^X_{x,t},\rho^X_{\infty})^p$ is always smaller than $1$ we have, by increasing the constant factors in the right-hand side if necessary, that \eqref{boundexpowasserstein} holds true for all $x \in [\epsilon, 1-\epsilon]$ and $t \geq 0$. 
\end{proof}

\begin{proof}[Proof of Theorem \ref{speedpolofcvtostationarity}]
Assume that \eqref{weakhyp} is satisfied for some $\gamma>0$. Proceeding as in the proof of Theorem \ref{speedexpoofcvtostationarity} we get that, for any $p \geq 1$, $X$ is locally $W_p$-uniformly ergodic on $(0,1)$ and the claim about open set recurrence. We now prove \eqref{boundpolwasserstein}. Let us fix $\ell\in (0,C_0(\Lambda,\mu,\sigma) \wedge C_1(\Lambda,\mu,\sigma))$, $p \geq 1$, $\epsilon \in (0,1/2)$ and $\beta \in (0,(1+\gamma)\gamma/(2\gamma+1)p)$ and note that $p \beta < (1+\gamma)\gamma/(2\gamma+1) < \gamma$. Applying \eqref{decaywpsiegmund} from Theorem \ref{genericsieg} with the choice $a\coloneqq e^{-\ell t}$ and combining with \eqref{yinpoldecay} from Proposition \ref{thm:coexistenceY} and \eqref{tailpisoushypmomentspol} from Proposition \ref{queueexpopix} (both applied with the choice of $\alpha=p \beta$) we get the existence of a constant $K>0$ such that, for all $x \in [\epsilon,1-\epsilon]$ and $t \geq \log(1/\epsilon)/\ell$, 
\begin{align*}
W_p(\rho^X_{x,t},\rho^X_{\infty})^p & \leq \sup_{y \in [0,1]} \mathbb{P}_y \left ( Y_t\in [e^{-\ell t},1-e^{-\ell t}] \right ) + \mathbb{P}_{e^{-\ell t}} (Y_{\infty}=1) + \mathbb{P}_{1-e^{-\ell t}} (Y_{\infty}=0)\\
&\leq Kt^{-p\beta}. 
\end{align*}
Since $W_p(\rho^X_{x,t},\rho^X_{\infty})^p$ is always smaller than $1$ we get, by increasing the constant factors in the right-hand side if necessary, that \eqref{boundpolwasserstein} holds true for all $x \in [\epsilon, 1-\epsilon]$ and $t \geq 0$. 
\end{proof}
\section{From ergodicity to open set recurrence}\label{sec:ergotoopensetrec}
This section is devoted to proving the results stated in Section \ref{sec:opensetrec} about recurrence properties of $[0, 1]$-valued ergodic Markov processes. More precisely, we will prove Propositions  \ref{suppofmuinftyisrecunif} and \ref{suppofmuinftyisrec} on topological recurrence of the support of the corresponding stationary distribution. We will also demonstrate Theorem \ref{fullsupport} on the topological recurrence of the full interval $[0,1]$ for $\Lambda$-Wright-Fisher processes under a simple criterion for the measure $\Lambda$. We conclude the section by proving the optimality of this criterion.
\subsection{Recurrence properties for ergodic processes}\label{ergotoopensetrec}
In this section we prove Propositions  \ref{suppofmuinftyisrecunif} and \ref{suppofmuinftyisrec}. We start with the proof of the former.
\begin{proof}[Proof of Proposition \ref{suppofmuinftyisrecunif}]
Let $x \in Supp \ \rho^X_{\infty}$, $z \in (0,1)$ and $\eta>0$. Since $x \in Supp \ \rho^X_{\infty}$, we have $\delta\coloneqq\rho^X_{\infty}((x-\eta/2,x+\eta/2))>0$. Since $X$ is assumed to be $d_{LP}$-uniformly ergodic on $(0,1)$, there exists $t_0>0$ such that, for any $y \in [0,1]$, we have $d_{LP}(\rho^X_{y,t_0},\rho^X_{\infty})<(\delta\wedge\eta)/2$. By definition of $d_{LP}(\cdot,\cdot)$, we get, for all $y\in[0,1]$, 
\begin{align*}
\ \mathbb{P}_y(X_{t_0} \in (x-\eta,x+\eta)) \geq \rho^X_{\infty}((x-\eta/2,x+\eta/2)) - (\delta\wedge\eta)/2 \geq \delta/2 >0. 
\end{align*}
We thus get that, for the Markov chain $(X_{nt_0})_{n \geq 0}$ on $[0,1]$, $(x-\eta,x+\eta)$ is uniformly accessible from $[0,1]$ in the sense of \cite[(4.11)]{bookMT1993}. By \cite[Thm. 9.1.3]{bookMT1993} we get 
\begin{align*}
\mathbb{P}_z( \text{$X_{nt_0} \in (x-\eta,x+\eta)$ for infinitely many $n\geq 0$} )=1. 
\end{align*}
Since this holds for all $z \in (0,1)$ and $\eta>0$, we deduce that $x \in \mathcal{R}(X)$. Therefore, $Supp \ \rho^X_{\infty} \subset \mathcal{R}(X)$. 
\end{proof}

The case of the local uniform ergodicity condition from Proposition \ref{suppofmuinftyisrecunif} is more delicate and requires a preliminary lemma. 
\begin{lemme} \label{largeintoftenvisited}
Let $X$ be a Markov process on $[0,1]$. We assume that $X$ is ergodic on $(0,1)$ and denote its stationary distribution by $\rho^X_{\infty}$. We also assume that $\rho^X_{\infty}(\{0,1\})=0$. Then for any $x \in (0,1)$ and $t_0>0$ we have 
\begin{align}
\mathbb{P}_x( \textrm{$X_{nt_0} \in [\epsilon,1-\epsilon]$ for infinitely many $n\geq 0$} ) \underset{\epsilon \rightarrow 0}{\longrightarrow} 1. \label{largeintoftenvisited0}
\end{align}
\end{lemme}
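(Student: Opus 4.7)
My plan is to prove the complementary statement: $\mathbb{P}_x(B_\epsilon)\to 0$ as $\epsilon\to 0$, where
\[
B_\epsilon \coloneqq \{X_{nt_0} \notin [\epsilon,1-\epsilon] \text{ for all but finitely many } n\} = \liminf_n E_{n,\epsilon},
\]
with $E_{n,\epsilon} \coloneqq \{X_{nt_0} \in [0,\epsilon) \cup (1-\epsilon,1]\}$. The reverse Fatou inequality applied to the indicators $\mathds{1}_{E_{n,\epsilon}}$ yields
\[
\mathbb{P}_x(B_\epsilon) \leq \liminf_{n\to\infty} \mathbb{P}_x(E_{n,\epsilon}) = \liminf_{n\to\infty} \Big( \rho^X_{x,nt_0}([0,\epsilon)) + \rho^X_{x,nt_0}((1-\epsilon,1]) \Big).
\]

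Next I would bring in ergodicity: by assumption $\rho^X_{x,nt_0}$ converges weakly to $\rho^X_\infty$ as $n\to\infty$. Let $C\subset (0,1/2)$ denote the set of $\epsilon$ such that $\rho^X_\infty(\{\epsilon\})=\rho^X_\infty(\{1-\epsilon\})=0$; since $\rho^X_\infty$ has at most countably many atoms, $(0,1/2)\setminus C$ is countable. For $\epsilon\in C$, the sets $[0,\epsilon)$ and $(1-\epsilon,1]$ are $\rho^X_\infty$-continuity sets, so the Portmanteau theorem turns the $\liminf$ into a genuine limit:
\[
\lim_{n\to\infty}\mathbb{P}_x(E_{n,\epsilon}) = \rho^X_\infty([0,\epsilon)) + \rho^X_\infty((1-\epsilon,1]).
\]

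To conclude, I would pick a sequence $\epsilon_k\in C$ with $\epsilon_k\downarrow 0$. Continuity of the measure from above, together with the assumption $\rho^X_\infty(\{0,1\})=0$, gives $\rho^X_\infty([0,\epsilon_k)) + \rho^X_\infty((1-\epsilon_k,1]) \to \rho^X_\infty(\{0\}) + \rho^X_\infty(\{1\}) = 0$, hence $\mathbb{P}_x(B_{\epsilon_k})\to 0$. Finally, since $E_{n,\epsilon}$ decreases as $\epsilon$ decreases, so do $B_\epsilon$ and $\mathbb{P}_x(B_\epsilon)$. For any $\delta>0$ one can pick $k$ with $\mathbb{P}_x(B_{\epsilon_k})<\delta$ and then $\mathbb{P}_x(B_\epsilon)\leq\mathbb{P}_x(B_{\epsilon_k})<\delta$ for every $\epsilon\leq\epsilon_k$, which gives $\lim_{\epsilon\to 0}\mathbb{P}_x(B_\epsilon)=0$, equivalent to \eqref{largeintoftenvisited0}. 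No step stands out as a real obstacle: the argument is essentially Fatou plus Portmanteau, with the countable-atom monotonicity trick used only to bypass the possibility that the chosen boundaries $\{\epsilon,1-\epsilon\}$ may carry mass.
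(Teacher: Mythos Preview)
Your proof is correct and follows a route that is close in spirit to, but technically different from, the paper's. Both arguments bound the complementary probability by the $\rho^X_\infty$-mass of a shrinking neighbourhood of $\{0,1\}$ and then let that neighbourhood collapse. The paper constructs explicit continuous tent functions $f_k$ (equal to $1$ on $[0,1/k]\cup[1-1/k,1]$, vanishing on $[2/k,1-2/k]$), applies weak convergence directly to $\mathbb{E}_x[f_k(X_{nt_0})]$, and then uses dominated convergence on the complement $\mathcal{E}_k^c$ to extract $\mathbb{P}_x(\mathcal{E}_k^c)$ as a lower bound for the limit. You instead apply Fatou's lemma to the indicators of $E_{n,\epsilon}$ and invoke the Portmanteau theorem for continuity sets. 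Your route is shorter and avoids building test functions, at the mild cost of the atom bookkeeping (restricting first to $\epsilon\in C$ and then using monotonicity of $\epsilon\mapsto B_\epsilon$); the paper's use of continuous $f_k$ sidesteps that issue entirely, since weak convergence is stated for continuous test functions and no boundary condition is needed.
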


\begin{proof}
Let 
\begin{align*}
\mathcal{E}_k \coloneqq \left \{ \text{$X_{nt_0} \in [1/k,1-1/k]$ for infinitely many $n\geq 0$} \right \}. 
\end{align*}
For any $k \geq 4$ we define a function $f_k:[0,1] \rightarrow \mathbb{R}$ by 
\begin{align*}
f_k(x) \coloneqq \left\{
\begin{aligned}
& 1 \ \text{if} \ x \in [0,1/k] \cup [1-1/k,1], \\
& -kx+2 \ \text{if} \ x \in (1/k,2/k), \\
& kx+2-k \ \text{if} \ x \in (1-2/k,1-1/k), \\
& 0 \ \text{if} \ x \in [2/k,1-2/k]. \end{aligned} \right. 
\end{align*}
Since $f_k$ is continuous, the ergodicity yields 
\begin{align}
\mathbb{E}_x[f_k(X_{nt_0})] \underset{n \rightarrow \infty}{\longrightarrow} \int_{[0,1]} f_k(z) \rho^X_{\infty}(dz) \leq \rho^X_{\infty}([0,2/k] \cup [1-2/k,1]). \label{cvefkx1}
\end{align}
On $\mathcal{E}_k^c$, $f_k(X_{nt_0})$ converges almost surely to $1$ so we get by dominated convergence that 
\begin{align}
\mathbb{E}_x[f_k(X_{nt_0}) \mathds{1}_{\mathcal{E}_k^c}] \underset{n \rightarrow \infty}{\longrightarrow} \mathbb{P}_x(\mathcal{E}_k^c). \label{cvefkx2}
\end{align}
Since $f_k$ is non-negative, the combination of \eqref{cvefkx1} and \eqref{cvefkx2} yields 
\begin{align*}
\mathbb{P}_x(\mathcal{E}_k^c) \leq \rho^X_{\infty}([0,2/k] \cup [1-2/k,1]). 
\end{align*}
Then, by the assumption $\rho^X_{\infty}(\{0,1\})=0$, the right-hand side converges to $0$ as $k \rightarrow \infty$. We thus get $\mathbb{P}_x(\mathcal{E}_k^c) \rightarrow 0$ as $k \rightarrow \infty$, which yields \eqref{largeintoftenvisited0}. 
\end{proof}
Equipped with the previous lemma, we can now proceed to show Proposition \ref{suppofmuinftyisrec}.
\begin{proof}[Proof of Proposition \ref{suppofmuinftyisrec}]
Let $x \in Supp \ \rho^X_{\infty}$, $z \in (0,1)$, $\eta>0$ and $\epsilon \in (0,1/2)$. Since $x \in Supp \ \rho^X_{\infty}$ we have $\delta\coloneqq\rho^X_{\infty}((x-\eta/2,x+\eta/2))>0$. By the assumption that $X$ is locally $d_{LP}$-uniformly ergodic on $(0,1)$, there exists $t_0>0$ such that for any $y \in [\epsilon,1-\epsilon]$ we have $d_{LP}(\rho^X_{y,t_0},\rho^X_{\infty})<(\delta\wedge\eta)/2$. By definition of $d_{LP}(\cdot,\cdot)$ this yields 
\begin{align*}
\forall y \in [\epsilon,1-\epsilon], \ \mathbb{P}_y(X_{t_0} \in (x-\eta,x+\eta)) \geq \rho^X_{\infty}((x-\eta/2,x+\eta/2)) - (\delta\wedge\eta)/2 \geq \delta/2 >0. 
\end{align*}
We thus get that, for the Markov chain $(X_{nt_0})_{n \geq 0}$ on $[0,1]$, $(x-\eta,x+\eta)$ is uniformly accessible from $[\epsilon,1-\epsilon]$ in the sense of \cite[(4.11)]{bookMT1993}. By \cite[Thm. 9.1.3]{bookMT1993} we get that 
\begin{align}
& \mathbb{P}_z( \text{$X_{nt_0} \in [\epsilon,1-\epsilon]$ for infinitely many $n\geq 0$} ) \nonumber \\
& \qquad \qquad \qquad \leq \mathbb{P}_z( \text{$X_{nt_0} \in (x-\eta,x+\eta)$ for infinitely many $n\geq 0$} ). \label{probaxvisitsneighborhood}
\end{align}
Thus, letting $\epsilon\to0$ and using Lemma \ref{largeintoftenvisited}, we get that the right-hand side of \eqref{probaxvisitsneighborhood} equals $1$. Since this holds for all $z \in (0,1)$ and $\eta>0$, we conclude that $x \in \mathcal{R}(X)$. Therefore, $Supp \ \rho^X_{\infty} \subset \mathcal{R}(X)$. 
\end{proof}

\subsection{Open set recurrence for $\Lambda$-Wright--Fischer processes}\label{applopensetreclwf}

In this Subsection we prove Theorem \ref{fullsupport}. We split the proof into two lemmas. Lemma \ref{fullsupportdiff} covers the case $\Lambda(\{0\})>0$ while  Lemma \ref{fullsupportjump} covers the case $\Lambda((0,1/2+\epsilon])>0$ for every $\epsilon>0$. 

\begin{lemme} \label{fullsupportdiff}
For any solution $X$ of \eqref{eq:SDEWFP}, if $\Lambda(\{0\})>0$ and $\mathcal{R}(X) \cap (0,1) \neq \emptyset$ then $\mathcal{R}(X)=[0,1]$, in particular $X$ is open set recurrent. 
\end{lemme}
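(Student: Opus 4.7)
The strategy exploits the non-degenerate Brownian component $\sqrt{\Lambda(\{0\})X_t(1-X_t)}\,\dd W_t$, which under the hypothesis $\Lambda(\{0\})>0$ is uniformly elliptic on every compact subinterval of $(0,1)$. Since $\mathcal{R}(X)$ is closed and $\overline{(0,1)}=[0,1]$, it suffices to show $(0,1)\subset\mathcal{R}(X)$. Fix $x_0\in\mathcal{R}(X)\cap(0,1)$, $x\in(0,1)$ and $\eta>0$; the heart of the argument is an accessibility lemma stating that there exist $\eta_0, t_0, \delta>0$ such that
\[
\inf_{y\in(x_0-\eta_0,x_0+\eta_0)}\mathbb{P}_y\bigl(X_{t_0}\in(x-\eta,x+\eta)\bigr)\geq \delta.
\]

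To prove the accessibility lemma I would fix a threshold $\varepsilon>0$ and split each of the Poisson measures $\tilde N$, $S$, $M$ into a large-jump part (jumps of size $>\varepsilon$) and a small-jump part, noting that only the former has finite intensity. The event $G$ that no large jump occurs on $[0,t_0]$ thus has positive probability, and on $G$ the process $X$ coincides with an auxiliary process $Y$ driven only by $W$ and the small-jump parts (the compensators of the removed large-jump terms being absorbed into a modified drift). Since $Y$ is independent of the omitted large-jump measures, it suffices to establish the uniform lower bound with $X$ replaced by $Y$. As $Y$ has a uniformly elliptic diffusion coefficient and bounded drift on every $[1/k,1-1/k]$, a Stroock--Varadhan-type support theorem for jump diffusions supplies a continuous control path from a neighborhood of $x_0$ to a neighborhood of $x$ inside some $[1/k,1-1/k]$ containing both points, together with positive Gaussian mass on a tube around that path; this gives the required uniform lower bound for $Y_{t_0}$, hence for $X_{t_0}$.

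With the accessibility lemma in hand, the remainder is a standard Markovian argument. Fix $z\in(0,1)$ and inductively define $\tau_1:=\inf\{t\geq 1:X_t\in(x_0-\eta_0,x_0+\eta_0)\}$ and $\tau_{n+1}:=\inf\{t\geq\tau_n+t_0+1:X_t\in(x_0-\eta_0,x_0+\eta_0)\}$; each $\tau_n$ is $\mathbb{P}_z$-a.s.\ finite because $x_0\in\mathcal{R}(X)$. The strong Markov property and the accessibility lemma yield, $\mathbb{P}_z$-a.s.,
\[
\mathbb{P}_z\bigl(X_{\tau_n+t_0}\in(x-\eta,x+\eta)\,\big|\,\mathcal{F}_{\tau_n}\bigr)\geq \delta.
\]
A conditional Borel--Cantelli argument then gives $X_{\tau_n+t_0}\in(x-\eta,x+\eta)$ for infinitely many $n$, $\mathbb{P}_z$-a.s. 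Since $\tau_n+t_0\to\infty$, this proves $x\in\mathcal{R}(X)$, and hence $\mathcal{R}(X)=[0,1]$ by closedness. The main obstacle is the accessibility lemma: potentially infinite-activity small jumps (arising when $\Lambda$, $\mu$ or $\nu$ is heavy near $0$) rule out the naive ``no jumps at all'' strategy and force one to carefully isolate the non-degenerate Brownian component before invoking a support theorem.
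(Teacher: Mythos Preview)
Your overall architecture matches the paper's: reduce to a uniform accessibility estimate of the form $\inf_{w\in(x_0-\eta_0,x_0+\eta_0)}\mathbb{P}_w(X_{t_0}\in(x-\eta,x+\eta))\geq\delta>0$, then combine with the strong Markov property at the successive return times to a neighbourhood of $x_0$ and a Borel--Cantelli argument. The paper does exactly this skeleton.

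Where you diverge is in how the accessibility estimate is obtained. You propose to invoke a Stroock--Varadhan-type support theorem for the small-jump process $Y$. The paper instead works by hand: it applies the Lamperti transform $f(v)=\tfrac{\pi}{2}+\arcsin(2v-1)$, which turns the diffusion coefficient into the constant $\sqrt{\Lambda(\{0\})}$, and then decomposes $f(X_t)$ as $f(w)+\sqrt{\Lambda(\{0\})}W_t$ plus a drift term, a compensator term, the $S$- and $M$-integrals, a large-jump part $\mathcal{I}_{N_1}$, and a small-jump martingale $\mathcal{I}_{N_0}$. The event of positive probability is built by prescribing the Brownian path to stay in a tube around the linear bridge from $f(x_0)$ to $f(x)$, requiring $N_1$ to be silent, and requiring the $S,M$ contributions to be small (controlled by Markov's inequality since $\mu,\nu$ are finite); the small-jump martingale $\mathcal{I}_{N_0}$ is then controlled \emph{conditionally} on the other driving noises via Doob's $L^2$ inequality, with the threshold $m$ chosen last to make its variance as small as desired. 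This is entirely elementary and avoids any black-box support theorem.

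Your route would work, but the phrase ``a Stroock--Varadhan-type support theorem for jump diffusions'' hides genuine work: such theorems for infinite-activity jump processes are not off-the-shelf, and you would still need to argue that the small-jump noise does not destroy the Gaussian tube estimate. In practice one ends up proving precisely what the paper proves --- that the small-jump martingale is small in $L^2$ uniformly, so that conditionally on a favourable Brownian path and no large jumps the process stays in the prescribed tube with probability bounded away from zero. The Lamperti transform is the device that makes this clean, since after it the Brownian term decouples from $X$ and its support is trivially all of $\mathcal{C}([0,\delta])$.
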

The idea of the proof is to compose $X$ with a carefully chosen function so that the diffusion coefficient of the resulting process becomes constant (with respect to the process value). This transformation enables us to prescribe the process value within any desired interval with positive probability. A significant part of the proof is then devoted to controlling the remaining terms.

\begin{proof}[Proof of Lemma \ref{fullsupportdiff}]
We assume that $\Lambda(\{0\})>0$ and $\mathcal{R}(X) \cap (0,1) \neq \emptyset$. Let us fix $x \in \mathcal{R}(X) \cap (0,1)$ and $y \in (0,1)$. We only need to prove that $y \in \mathcal{R}(X)$. We fix $m \in (0,1)$ and $\epsilon \in (0,1/2)$ that will be determined later. We define the stopping time $T(\epsilon)$ via $$T(\epsilon)\coloneqq \inf \{ t \geq 0, \ X_t \in [\epsilon,1-\epsilon]^c \}.$$ We denote by $N_0$ and $N_1$ the restrictions of the Poisson random measure $N$ to $[0,\infty)\times(0,m]\times [0,1]$ and $[0,\infty)\times(m,1]\times [0,1]$ respectively. We note that $N_0$ and $N_1$ are independent Poisson random measures and that $N=N_0+N_1$. 

Let $w \in (\epsilon,1-\epsilon)$. We define the bijection $f:[0,1] \rightarrow [0,\pi]$ by $f(v)\coloneqq\frac{\pi}{2}+\arcsin(2v-1)$. Then, by applying It{\^o}'s formula (see for example \cite[Thm. II.5.1]{ikedawatanabe}) to a function $f_a$ that coincides with $f$ on $[a,1-a]$ (for $a \in (0,\epsilon)$ arbitrary small) and that is smooth on $\mathbb{R}$ we have $\mathbb{P}_w$-almost surely that, for all $t \in [0,T(\epsilon)]$, 
\begin{align}
f(X_t) \  &= \  f(w) + \sqrt{\Lambda(\{0\})} W_t + \mathcal{I}_{N_0}(t) + \mathcal{I}_{N_1}(t) + \mathcal{I}_{\Lambda}(t) + \mathcal{I}_c(t) + \mathcal{I}_S(t) + \mathcal{I}_M(t), \label{fxtito}
\end{align}
where 
\begin{align*}
\mathcal{I}_{N_0}(t) & \coloneqq \int_{[0,t]\times(0,m]\times [0,1]} \big ( f((1-r)X_{s-} + r\mathds{1}_{\{u\leq X_{s-}\}})-f(X_{s-}) \big)\tilde{N}(\dd s, \dd r,\dd u), \\
\mathcal{I}_{N_1}(t) & \coloneqq \int_{[0,t]\times(m,1]\times [0,1]} \big ( f((1-r)X_{s-} + r\mathds{1}_{\{u\leq X_{s-}\}})-f(X_{s-}) \big)\tilde{N}(\dd s, \dd r,\dd u), \\
\mathcal{I}_{\Lambda}(t) & \coloneqq \int_0^t \int_{(0,1]} \int_{[0,1]} \big ( f((1-r)X_{s} + r\mathds{1}_{\{u\leq X_{s}\}})-f(X_{s}) \nonumber\\
&\qquad \qquad \quad \qquad - rf'(X_s)(\mathds{1}_{\{u\leq X_{s}\}}(1-X_{s})-\mathds{1}_{\{u>X_{s}\}} X_{s}) \big) \dd u \ r^{-2} \Lambda(\dd r) \dd s, \\
\mathcal{I}_c(t) & \coloneqq \int_0^t \left ( \frac{\Lambda(\{0\})}{2} f''(X_s) X_s(1-X_s)\right.\\
&\qquad \qquad \quad \qquad\left.+ f'(X_s)(\sigma(X_s)X_s(1-X_s)+\theta_a (1-X_s)-\theta_A X_s) \right) \dd s, \\
\mathcal{I}_S(t) & \coloneqq \int_{[0,t]\times(-1,1)} \big ( f(X_{s-}+rX_{s-}(1-X_{s-}))-f(X_{s-}) \big ) S(\dd s, \dd r), \\
\mathcal{I}_M(t) & \coloneqq \int_{[0,t]\times(-1,1)} \big ( f((1-|r|)X_{s-}+\lvert r\rvert \mathds{1}_{\{r\geq 0\}})-f(X_{s-}) \big ) M(\dd s, \dd r). 
\end{align*}

\noindent Let $C_{\sigma}\coloneqq\sup_{v \in [0,1]} |\sigma(v)|$, $C_1(\epsilon)\coloneqq\sup_{v \in [\epsilon,1-\epsilon]} |f'(v)|$ and $C_2(\epsilon)\coloneqq\sup_{v \in [\epsilon,1-\epsilon]} |f''(v)|$. We have $\mathbb{P}_w$-almost surely that for all $t \in [0,T(\epsilon)]$, 
\begin{align}
|\mathcal{I}_c(t)| \leq \big (\Lambda(\{0\}) C_2(\epsilon) + C_1(\epsilon)(C_{\sigma}+\theta_a+\theta_A) \big )t, \label{controltermdrift}
\end{align}
and, using Taylor's theorem, 
\begin{align}
|\mathcal{I}_{\Lambda}(t)| & \leq C_2(\epsilon) \Lambda((0,1]) t, \label{controltermneurtalcomp} \\
|\mathcal{I}_S(t)| & \leq C_1(\epsilon) \int_{[0,t]\times(-1,1)}\!\!\!\! |r| S(\dd s, \dd r), \quad |\mathcal{I}_M(t)| \leq C_1(\epsilon) \int_{[0,t]\times(-1,1)} \!\!\!\!|r| M(\dd s, \dd r). \label{controltermenv}
\end{align}
It is easy to see that, conditionally on $W$, $S$, $M$ and $N_1$, the process $(\mathcal{I}_{N_0}(t \wedge T(\epsilon)))_{t \geq 0}$ is a martingale. By Doob's maximal inequality, the compensation formula and Taylor's theorem we have, for any $t \geq 0$, 
\begin{align}
 &\frac1{4} \mathbb{E}_w \left [ \sup_{s \in [0,t \wedge T(\epsilon)]} |\mathcal{I}_{N_0}(s)|^2 \big | W, S, M, N_1 \right ] \leq \mathbb{E}_w \left [ |\mathcal{I}_{N_0}(t \wedge T(\epsilon))|^2 \big | W, S, M, N_1 \right ] \nonumber \\
= & \mathbb{E}_w \bigg [ \int_0^t \int_{(0,m]} \int_{[0,1]} \left ( f((1-r)X_{s} + r\mathds{1}_{\{u\leq X_{s}\}})\right.\bigg.\nonumber\\&\qquad\qquad\qquad\qquad\qquad\qquad\bigg.\left. -f(X_{s}) \right)^2 \mathds{1}_{\{s\leq T(\epsilon)\}} \dd u \ r^{-2} \Lambda(\dd r) \dd s \big | W, S, M, N_1 \bigg ] \nonumber \\
\leq & C_1(\epsilon)^2 \Lambda((0,m]) t. \label{controltermneurtal}
\end{align}

We fix $\eta \in (0,\min\{x,1-x,y,1-y\})$. Let $\eta'>0$ be such that $f^{-1}((f(x)-7\eta',f(x)+7\eta'))\subset(x-\eta,x+\eta)$ and $f^{-1}((f(y)-7\eta',f(y)+7\eta'))\subset(y-\eta,y+\eta)$. Let also $\eta'' \in (0,\min\{x,1-x\})$ be such that $[x-\eta'',x+\eta''] \subset f^{-1}((f(x)-\eta',f(x)+\eta'))$. We then choose $\epsilon >0$ such that $(x-\eta \vee \eta'',x+\eta \vee \eta'') \cup (y-\eta,y+\eta) \subset (\epsilon,1-\epsilon)$. We note from the definition of $\eta''$ that 
\begin{align}
\forall v \in [x-\eta'',x+\eta''], \ |f(v)-f(x)|<\eta'. \label{localboundf}
\end{align}
From now one we assume that $w \in [x-\eta'',x+\eta'']$. We then choose 
\begin{align*}
\frac1{\delta} \coloneqq & \frac1{\eta'} \times \max \left \{ \Lambda(\{0\}) C_2(\epsilon) + C_1(\epsilon)(C_{\sigma}+\theta_a+\theta_A), \right . \\
& \qquad \qquad \left . C_2(\epsilon) \Lambda((0,1]), 2C_1(\epsilon) \mu((-1,1)), 2C_1(\epsilon) \nu((-1,1)) \right \}.  
\end{align*}
From the choice of $\delta$ and \eqref{controltermdrift}-\eqref{controltermneurtalcomp} we see that, $\mathbb{P}_w$-almost surely, $$\sup_{t \in [0,\delta \wedge T(\epsilon)]} |\mathcal{I}_c(t)| \leq \eta', \ \sup_{t \in [0,\delta \wedge T(\epsilon)]} |\mathcal{I}_{\Lambda}(t)| \leq \eta'.$$
Moreover, we have by the compensation formula that 
\begin{align}
\mathbb{E} \left [ \int_{[0,\delta]\times(-1,1)} |r| S(\dd s, \dd r) \right ] & = \delta \mu((-1,1))<\frac{\eta'}{C_1(\epsilon)}, \label{espenv1} \\ \mathbb{E} \left [ \int_{[0,\delta]\times(-1,1)} |r| M(\dd s, \dd r) \right ] & = \delta \nu((-1,1))<\frac{\eta'}{C_1(\epsilon)}. \label{espenv2}
\end{align}
We consider the event 
\begin{align*}
\mathcal{E} \coloneqq & \left \{ \sup_{t \in [0,\delta]} \left |\sqrt{\Lambda(\{0\})} W_t -(f(y)-f(x))\frac{t}{\delta} \right |<\eta' \right \} \cap \{ N_1([0,\delta]\times(m,1]\times [0,1])=0 \} \\
& \qquad \cap \left \{ \int_{[0,\delta]\times(-1,1)} |r| S(\dd s, \dd r)<\frac{\eta'}{C_1(\epsilon)} \right \} \cap \left \{ \int_{[0,\delta]\times(-1,1)} |r| M(\dd s, \dd r)<\frac{\eta'}{C_1(\epsilon)} \right \}. 
\end{align*}
The first event has positive probability since every open set, in the space of continuous functions on $[0,\delta]$, has a positive weight for the Wiener measure. The second event has probability $e^{-\delta \int_{(m,1]} r^{-2} \Lambda(\dd r)}$. We see from \eqref{espenv1}-\eqref{espenv2} that the third and fourth events have a positive probability. Since $W$, $S$, $M$ and $N_1$ are independent, the four events in the intersection defining $\mathcal{E}$ are independent. We deduce that $\mathcal{E}$ has positive probability that we denote $p(\delta,m,\eta',x,y)$ and that does not depend on the starting point $w$ of the diffusion. We see from the above and \eqref{controltermenv} that, on this event, we have $\mathbb{P}_w$-almost surely 
\begin{align}
\forall i \in \{ \Lambda, c, S, M \}, \ \sup_{t \in [0,\delta \wedge T(\epsilon)]} |\mathcal{I}_i(t)| \leq \eta' \qquad \text{and} \qquad \sup_{t \in [0,\delta \wedge T(\epsilon)]} |\mathcal{I}_{N_1}(t)|=0. \label{controltermsend}
\end{align}
Then we choose $m\in(0,1)$ such that $4\eta'^{-2} C_1(\epsilon)^2 \Lambda((0,m]) \delta<1/2$. Let us set 
\begin{align*}
\mathcal{E}'\coloneqq \left \{\sup_{t \in [0,\delta \wedge T(\epsilon)]} |\mathcal{I}_{N_0}(t)|\leq\eta' \right \}. 
\end{align*}
By Markov inequality, \eqref{controltermneurtal} and the choice of $m$ we get 
\begin{align}
\mathbb{P}_w ( \mathcal{E}'^{c} \big | W, S, M, N_1 ) & \leq \eta'^{-2} \mathbb{E}_w \left [ \sup_{s \in [0,\delta \wedge T(\epsilon)]} |\mathcal{I}_{N_0}(t)|^2 \big | W, S, M, N_1 \right ] \nonumber \\
& \leq 4\eta'^{-2} C_1(\epsilon)^2 \Lambda((0,m]) \delta < 1/2. \label{estimprobacondsmallneutral0}
\end{align}
We set $\mathcal{E}'' \coloneqq \mathcal{E} \cap \mathcal{E}'$. Recalling that $\mathbb{P}_w(\mathcal{E})=p(\delta,m,\eta',x,y)$ and using \eqref{estimprobacondsmallneutral0} together with the fact that $\mathcal{E}$ is measurable with respect to the sigma-field generated by $W,S,M,N_1$, we get  that, for any $w \in [x-\eta'',x+\eta'']$, 
\begin{align}
\mathbb{P}_w (\mathcal{E}'') = \mathbb{E}_w \left [ \mathds{1}_{\mathcal{E}} \mathbb{E}_w \left [ \mathds{1}_{\mathcal{E}'} \big | W, S, M, N_1 \right ] \right ] \geq \mathbb{P}_w(\mathcal{E})/2 = p(\delta,m,\eta',x,y)/2 >0. \label{minopeprimeprime0}
\end{align}
Combining the definitions of $\mathcal{E}'$ and $\mathcal{E}''$ with \eqref{controltermsend} we get that, on $\mathcal{E}''$, we have 
\begin{align}
\forall i \in \{ N_0, \Lambda, c, S, M \}, \ \sup_{t \in [0,\delta \wedge T(\epsilon)]} |\mathcal{I}_i(t)| \leq \eta' \qquad \text{and} \qquad \sup_{t \in [0,\delta \wedge T(\epsilon)]} |\mathcal{I}_{N_1}(t)|=0. \label{controlallterms}
\end{align}

Let us assume that $y>x$. Since $\mathcal{E}'' \subset \mathcal{E}$ we see that, on $\mathcal{E}''$, for any $t \in [0,\delta \wedge T(\epsilon)]$, 
\begin{align*}
-\eta' < \sqrt{\Lambda(\{0\})} W_t < f(y)-f(x) + \eta'. 
\end{align*}
Combining this with \eqref{localboundf} and \eqref{controlallterms} and plugging into \eqref{fxtito} we get that, for any $w \in [x-\eta'',x+\eta'']$, on $\mathcal{E}''$, we have $\mathbb{P}_w$-almost surely that, for any $t \in [0,\delta \wedge T(\epsilon)]$, $f(x) -7\eta' < f(X^1_t) < f(y) +7\eta'$. By the choices of $\eta'$ and $\epsilon$ this implies that $X_{\delta \wedge T(\epsilon)} \in (\epsilon,1-\epsilon)$ so, by almost sure right continuity, $\delta \wedge T(\epsilon) \neq T(\epsilon)$ and we get that $\delta<T(\epsilon)$. In conclusion we get that, for any $w \in [x-\eta'',x+\eta'']$, on $\mathcal{E}''$, we have $\mathbb{P}_w$-almost surely $\delta<T(\epsilon)$. This yields in particular that \eqref{fxtito} and \eqref{controlallterms} can be applied at $t=\delta$. In the case where $y\leq x$, this is proved similarly. 

Recall that we have $|\sqrt{\Lambda(\{0\})} W_\delta -(f(y)-f(x))|<\eta'$ on $\mathcal{E}''$. Plugging this, \eqref{localboundf} and \eqref{controlallterms} into \eqref{fxtito} applied at $t=\delta$ we conclude that, for any $w \in [x-\eta'',x+\eta'']$, we have $\mathbb{P}_w$-almost surely on $\mathcal{E}''$ that $|f(X_{\delta})-f(y)|<7\eta'$ and, by the choice of $\eta'$, the latter implies that $X_{\delta} \in (y-\eta,y+\eta)$. In conclusion we obtain 
\begin{align}
H \coloneqq & \inf_{w \in [x-\eta'',x+\eta'']} \mathbb{P}_w \left ( X_{\delta} \in (y-\eta,y+\eta) \right ) \geq \inf_{w \in [x-\eta'',x+\eta'']} \mathbb{P}_w(\mathcal{E}'') >0, \label{minounifproba}
\end{align}
where the last bound come from \eqref{minopeprimeprime0}. 
Now let us fix $z \in (0,1)$ and set $T_1\coloneqq \inf \{ t \geq 0, \ X_t \in (x-\eta'',x+\eta'') \}$ and for any $n \geq 2$, $T_n\coloneqq \inf \{ t \geq T_{n-1}+1, \ X_t \in (x-\eta'',x+\eta'') \}$. Since $x \in \mathcal{R}(X)$, we have $\mathbb{P}_z$-almost surely that $T_n<\infty$ for all $n \geq 1$. By the strong Markov property of $X$ at $T_n$, 
\begin{align}
\mathbb{P}_z \left ( X_{T_n+\delta} \in (y-\eta,y+\eta) \right ) \geq H. \label{probareachtransfo}
\end{align}
We get by the Markov property that, $\mathbb{P}_z$-almost surely, the $\eta$-neighbourhood of $y$ is visited by $X$ at arbitrary large times. Since this holds for all $\eta>0$ small enough and $z \in (0,1)$ we obtain $y \in \mathcal{R}(X)$. This concludes the proof. 
\end{proof}

\begin{lemme} \label{fullsupportjump}
For any solution $X$ of \eqref{eq:SDEWFP}, if $\Lambda((0,1/2+\epsilon])>0$ for every $\epsilon>0$ and $\mathcal{R}(X) \cap (0,1) \neq \emptyset$ then $\mathcal{R}(X)=[0,1]$, in particular $X$ is open set recurrent. 
\end{lemme}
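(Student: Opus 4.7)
The plan is to replicate the strategy of the proof of Lemma \ref{fullsupportdiff}, with the crucial modification that the Brownian motion is replaced by jumps of the Poisson measure $N$ as the mechanism that drives $X$ close to any prescribed target. I will assume $\Lambda(\{0\})=0$, otherwise Lemma \ref{fullsupportdiff} already applies. Fix $x \in \mathcal{R}(X) \cap (0,1)$, $y \in [0,1]$ and $\eta>0$. The central step is to produce $\delta>0$, $\eta''>0$ and $p_0>0$ such that
\begin{equation*}
\inf_{w\in [x-\eta'', x+\eta'']}\mathbb{P}_w\bigl(X_\delta \in (y-\eta, y+\eta)\bigr)\geq p_0.
\end{equation*}
Once this uniform lower bound is in hand, $y\in \mathcal{R}(X)$ will follow exactly as in the last paragraph of the proof of Lemma \ref{fullsupportdiff}, by combining the recurrence of $x$ with the strong Markov property.

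To produce the required transition I will exploit that a single atom of $N$ at $(t,r,u)$ sends $X_{t-}$ to $U_r(X_{t-})\coloneqq (1-r)X_{t-}+r$ if $u\leq X_{t-}$, and to $D_r(X_{t-})\coloneqq (1-r)X_{t-}$ if $u>X_{t-}$. The hypothesis on $\Lambda$ guarantees that $Supp \ \Lambda \cap (0, 1/2+\epsilon]\neq\emptyset$ for every $\epsilon>0$, so for any sufficiently small $\epsilon$ I can pick $r^*$ in this intersection together with an open interval $I\ni r^*$ contained in $(0, 1/2+2\epsilon]$ with $\Lambda(I)>0$. The key reachability claim is then that, for $\epsilon$ small, there exist a word $T=(T_1,\ldots,T_N)\in \{U,D\}^N$ and values $r_1,\ldots,r_N\in I$ such that the iterates defined by $x_0=x$, $x_j=U_{r_j}(x_{j-1})$ when $T_j=U$, and $x_j=D_{r_j}(x_{j-1})$ when $T_j=D$, all lie in $[2\eta'', 1-2\eta'']$ for some $\eta''>0$, and satisfy $|x_N-y|<\eta/2$. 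When $r^*\leq 1/2$ this is the classical density of the attractor $[0,1]$ of the IFS $\{U_{r^*}, D_{r^*}\}$. When $r^* > 1/2$ (the case $\Lambda((0,1/2])=0$), the fixed-$r^*$ attractor is a Cantor-like set avoiding a gap around $1/2$; I will then first iterate $U_{r^*}$ to bring the orbit arbitrarily close to $1$ and then apply $D_{r^*}$, which lands near $1-r^*$, and choose $r^*$ close enough to $1/2$ so that this construction (iterated as needed) places a point in $(y-\eta/2, y+\eta/2)$ for any $y\in [0,1]$, including $y=1/2$.

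Given the word, I fix small pairwise disjoint time windows $[t_j, t_j+\tau]\subset (0, \delta)$ and small open intervals $I_j\ni r_j$ with $I_j\subset I$ and $\Lambda(I_j)>0$. Decomposing $N=N_0+N_1$ with $N_0$ supported on $[0,\infty)\times \bigl(\bigcup_j I_j\bigr)\times [0,1]$, I define $\mathcal{E}$ as the intersection of the independent events: (i) $N_0$ has exactly one atom in each $[t_j, t_j+\tau]\times I_j\times [0,1]$ and no other atom in $[0,\delta]$; (ii) the $u$-coordinate of the $j$-th such atom lies in $[0, X_{t_j-}]$ if $T_j=U$ and in $(X_{t_j-}, 1]$ if $T_j=D$; (iii) $N_1$, $S$ and $M$ have no atoms in $[0,\delta]$. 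On $\mathcal{E}$ only the prescribed jumps occur, and the bounds \eqref{controltermdrift}--\eqref{controltermenv} from the proof of Lemma \ref{fullsupportdiff} (used here to control the drift, the neutral compensator $\mathcal{I}_\Lambda$, and the potential contributions from $S$ and $M$) will show that, for $\delta$ small, the trajectory stays within $\eta''$ of the prescribed iterates, and in particular $X_\delta\in (y-\eta, y+\eta)$. Each event in (i) and (iii) has an explicit positive Poisson probability depending only on $\Lambda(I_j)$, $\mu((-1,1))$, $\nu((-1,1))$ and $\delta$. Conditionally on the positions of the atoms of $N_0$, their $u$-coordinates are i.i.d.\ uniform on $[0,1]$, so (ii) has conditional probability at least $\prod_{j=1}^N \min(x_{j-1},1-x_{j-1})/2>0$. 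Independence of these positive lower bounds then delivers $\mathbb{P}_w(\mathcal{E})\geq p_0>0$ uniformly in $w\in [x-\eta'', x+\eta'']$.

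The hard part will be the reachability claim in the subcase $r^*>1/2$: fixed-$r^*$ compositions cannot reach $1/2$, so one must use that the hypothesis allows $r^*$ to be taken arbitrarily close to $1/2$ from above, shrinking the excluded gap around $1/2$ below $\eta$, while arguing that a multi-stage composition of $U_{r^*}$ and $D_{r^*}$ can still land in $(y-\eta/2, y+\eta/2)$ for arbitrary $y$. The rest of the argument is essentially the careful bookkeeping of Poisson atoms already carried out in the proof of Lemma \ref{fullsupportdiff}.
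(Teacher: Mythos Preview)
Your overall strategy---use neutral jumps to propagate a topologically recurrent point towards any target---matches the paper's, but there is a genuine gap in your implementation. You require in event (iii) that $N_1$, the restriction of $N$ to jump sizes outside $\bigcup_j I_j$, have no atoms in $[0,\delta]$. Since your intervals $I_j$ are small neighbourhoods of points $r_j$ bounded away from $0$, the complement $(\bigcup_j I_j)^c$ contains a neighbourhood of $0$, and the intensity $\int_{(\bigcup_j I_j)^c} r^{-2}\Lambda(\dd r)$ may well be infinite (this is the generic situation, e.g.\ for any Beta-$\Lambda$ with $a\leq 1$). In that case $N_1$ has infinitely many atoms in $[0,\delta]$ almost surely and $\mathbb{P}_w(\mathcal{E})=0$. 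The paper circumvents this by keeping the small-jump part as a compensated martingale and controlling it via Doob's maximal inequality (as in the proof of Lemma~\ref{fullsupportdiff}), rather than asking for no small jumps.

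There is also a structural simplification you are missing. The hypothesis $\Lambda((0,1/2+\epsilon])>0$ for all $\epsilon>0$ together with closedness of $Supp\ \Lambda$ forces $Supp\ \Lambda\cap(0,1/2]\neq\emptyset$; so you can always take $r^*\leq 1/2$ and your ``hard part'' with $r^*>1/2$ never arises. The paper exploits this and, more importantly, avoids the multi-jump construction altogether: it fixes a single $r\in(0,1/2]\cap Supp\ \Lambda$, shows via a one-jump argument that $\mathcal{R}(X)\cap(0,1)$ is stable under both maps $f_0^r(x)=x+r(1-x)$ and $f_1^r(x)=(1-r)x$, and then uses closedness of $\mathcal{R}(X)$ together with the elementary fact that, for $r\leq 1/2$, the orbit closure of any point under $\{f_0^r,f_1^r\}$ is all of $[0,1]$ (proved by a signed-series representation after the affine change of variable $\psi(x)=x-1/2$). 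This bypasses the need to prescribe and control several jumps within one time window, and handles the small-jump part cleanly via the martingale estimate.
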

The first part of the proof deals with neutral events of size approximately $r$ (for $r \in (0,1/2] \cap Supp \ \Lambda$) and shows that, if $x \in \mathcal{R}(X) \cap (0,1)$, then $x+r(1-x) \in \mathcal{R}(X)$ and $(1-r)x \in \mathcal{R}(X)$. The second part of the proof shows that this stability property of $\mathcal{R}(X)$, together with its closedness, suffices to ensure that $\mathcal{R}(X)=[0,1]$. 
\begin{proof}[Proof of Lemma \ref{fullsupportjump}]
We assume that $\Lambda((0,1/2+\epsilon])>0$ for every $\epsilon>0$ and $\mathcal{R}(X) \cap (0,1) \neq \emptyset$. If $\Lambda(\{0\})>0$ then the result follows from Lemma \ref{fullsupportdiff}. We can thus assume $\Lambda(\{0\})=0$. Since $\Lambda((0,1/2+\epsilon])>0$ for every $\epsilon>0$, $(0,1/2+\epsilon]$ contains a point of $Supp \ \Lambda$ for every $\epsilon>0$ (see section \ref{notations}). Since $Supp \ \Lambda$ is a closed set (see section \ref{notations}) we deduce that $(0,1/2]$ contains a point of $Supp \ \Lambda$. We thus fix $r \in (0,1/2] \cap Supp \ \Lambda$. Let us choose $x \in \mathcal{R}(X) \cap (0,1)$, $\eta \in (0,\min\{x/5,(1-x)/5,r\})$. Let $m \in (0,r-\eta)$ that will be determined later. Again, we denote by $N_0$ and $N_1$ the restrictions of the Poisson random measure $N$ to $[0,\infty)\times(0,m]\times [0,1]$ and $[0,\infty)\times(m,1]\times [0,1]$ respectively. We note that $N_0$ and $N_1$ are independent Poisson random measures and that $N=N_0+N_1$. Since $\Lambda(\{0\})=0$, we have by \eqref{eq:SDEWFP} that $\mathbb{P}_w$-almost surely, for all $t \geq 0$, 
\begin{align}
X_t \  &= \  w + \mathcal{J}_{N_0}(t) + \mathcal{J}_{N_1}(t) + \mathcal{J}_c(t) + \mathcal{J}_S(t) + \mathcal{J}_M(t), \label{xtermssde}
\end{align}
where 
\begin{align*}
\mathcal{J}_{N_0}(t) & \coloneqq \int_{[0,t]\times(0,m]\times [0,1]} r\big(\mathds{1}_{\{u\leq X_{s-}\}}(1-X_{s-})-\mathds{1}_{\{u>X_{s-}\}} X_{s-} \big)\tilde{N}(\dd s, \dd r,\dd u), \\
\mathcal{J}_{N_1}(t) & \coloneqq \int_{[0,t]\times(m,1]\times [0,1]} r\big(\mathds{1}_{\{u\leq X_{s-}\}}(1-X_{s-})-\mathds{1}_{\{u>X_{s-}\}} X_{s-} \big)\tilde{N}(\dd s, \dd r,\dd u), \\
\mathcal{J}_c(t) & \coloneqq \int_0^t \big ( \sigma(X_s)X_s(1-X_s)+\theta_a (1-X_s)-\theta_A X_s \big ) \dd s, \\
\mathcal{J}_S(t) & \coloneqq \int_{[0,t]\times(-1,1)} rX_{s-}(1-X_{s-}) S(\dd s, \dd r), \\
\mathcal{J}_M(t) & \coloneqq \int_{[0,t]\times(-1,1)} \lvert r\rvert (\mathds{1}_{\{r\geq 0\}}(1-X_{t-})-\mathds{1}_{\{r< 0\}}X_{t-}) M(\dd s, \dd r). 
\end{align*}

Let $C_{\sigma}\coloneqq\sup_{v \in [0,1]} |\sigma(v)|$. We have $\mathbb{P}_w$-almost surely that for all $t \geq 0$, 
\begin{align}
|\mathcal{J}_c(t)| & \leq (C_{\sigma}+\theta_a+\theta_A) t, \qquad |\mathcal{J}_S(t)| \leq \int_{[0,t]\times(-1,1)} |r| S(\dd s, \dd r), \nonumber\\
|\mathcal{J}_M(t)| & \leq \int_{[0,t]\times(-1,1)} |r| M(\dd s, \dd r). \label{controltermenvsde}
\end{align}
It is easy to see that, conditionally on $S$, $M$ and $N_1$, the process $(\mathcal{J}_{N_0}(t))_{t \geq 0}$ is a martingale. By Doob's maximal inequality and the compensation formula we have, for any $t \geq 0$, 
\begin{align}
& \frac1{4} \mathbb{E}_w \left [ \sup_{s \in [0,t]} |\mathcal{J}_{N_0}(s)|^2 \big | S, M, N_1 \right ] \leq \mathbb{E}_w \left [ |\mathcal{J}_{N_0}(t)|^2 \big | S, M, N_1 \right ] \nonumber \\
= & \mathbb{E}_w \left [ \int\limits_0^t \int\limits_{(0,m]} \int\limits_{[0,1]} r^2 \big(\mathds{1}_{\{u\leq X_{s-}\}}(1-X_{s-})-\mathds{1}_{\{u>X_{s-}\}} X_{s-} \big)^2 \dd u \ r^{-2} \Lambda(\dd r) \dd s \mid  S, M, N_1 \right ] \nonumber \\
\leq & \Lambda((0,m]) t. \label{controltermneurtalsde}
\end{align}
From now one we assume that $w \in [x-\eta,x+\eta]$. We then choose 
\begin{align*}
\frac1{\delta} \coloneqq & \frac1{\eta} \times \max \left \{ C_{\sigma}+\theta_a+\theta_A, 2\mu((-1,1)), 2\nu((-1,1)) \right \}.  
\end{align*}
Then we see from the choice of $\delta$ and \eqref{controltermenvsde} that, $\mathbb{P}_w$-almost surely, $\sup_{t \in [0,\delta]} |\mathcal{J}_c(t)| \leq \eta$. Moreover, by the compensation formula that, we have 
\begin{align}
\mathbb{E} \left [ \int_{[0,\delta]\times(-1,1)} |r| S(\dd s, \dd r) \right ] &= \delta \mu((-1,1)) <\eta,\nonumber \\
\ \mathbb{E} \left [ \int_{[0,\delta]\times(-1,1)} |r| M(\dd s, \dd r) \right ] &= \delta \nu((-1,1)) <\eta. \label{espsmall}
\end{align}
We consider the event 
\begin{align*}
\mathcal{E} \coloneqq & \left \{ \int_{[0,\delta]\times(-1,1)} |r| S(\dd s, \dd r)<\eta \right \} \cap \left \{ \int_{[0,\delta]\times(-1,1)} |r| M(\dd s, \dd r)<\eta \right \} \\
& \cap \{ N_1([0,\delta]\times(m,1]\times [0,1])=N_1([0,\delta]\times(r-\eta,r+\eta) \times (0,x-5\eta))=1 \}. 
\end{align*}
We see from \eqref{espsmall} that the first two events have a positive probability. The third event says that the Poisson random measure $N$ has exactly one atom in $[0,\delta]\times(m,1]\times [0,1]$ and that this atom lies into $[0,\delta]\times(r-\eta,r+\eta) \times (0,x-5\eta)$. On $\mathcal{E}$ we denote this atom by $(S,R,U)$. Since the intensity measure $\dd t\times r^{-2}\Lambda(\dd r)\times \dd u$ give a finite weight to $[0,\delta]\times(m,1]\times [0,1]$ and a positive weight to $[0,\delta]\times(r-\eta,r+\eta) \times (0,x-5\eta)$ (because $r \in Supp \ \Lambda$), we see that the third event also has positive probability. Since the Poisson random measures $S$, $M$ and $N_1$ are independent, the three events in the intersection defining $\mathcal{E}$ are independent. We deduce that $\mathcal{E}$ has positive probability that we denote $p(\delta,m,r,\eta,x)$ and that does not depend on the starting point $w$ of the diffusion. We see from the above and \eqref{controltermenvsde} that, on this event, we have $\mathbb{P}_w$-almost surely 
\begin{align}
\forall i \in \{ c, S, M \}, \ \sup_{t \in [0,\delta]} |\mathcal{J}_i(t)| \leq \eta. \label{controltermsendsde}
\end{align}
Choose $m\in(0,1)$ such that $4\eta'^{-2} \Lambda((0,m]) \delta<1/2$ and set $\mathcal{E}'\coloneqq \{\sup_{t \in [0,\delta]} |\mathcal{J}_{N_0}(t)|\leq\eta \}$. By Markov inequality, \eqref{controltermneurtalsde} and the choice of $m$, we get 
\begin{align}
\mathbb{P}_w ( \mathcal{E}'^{c} \big | S, M, N_1 ) \leq \eta'^{-2} \mathbb{E}_w \left [ \sup_{s \in [0,\delta]} |\mathcal{J}_{N_0}(t)|^2 \big | S, M, N_1 \right ] \leq 4\eta'^{-2} \Lambda((0,m]) \delta < 1/2. \label{estimprobacondsmallneutral}
\end{align}
We set $\mathcal{E}'' \coloneqq \mathcal{E} \cap \mathcal{E}'$. Recalling that $\mathbb{P}_w(\mathcal{E})=p(\delta,m,r,\eta,x)$ and using \eqref{estimprobacondsmallneutral} together with the fact that $\mathcal{E}$ is measurable with respect to the sigma-field generated by $S,M,N_1$, we get  that, for any $w \in [x-\eta,x+\eta]$, 
\begin{align}
\mathbb{P}_w (\mathcal{E}'') = \mathbb{E}_w \left [ \mathds{1}_{\mathcal{E}} \mathbb{E}_w \left [ \mathds{1}_{\mathcal{E}'} \big | S, M, N_1 \right ] \right ] \geq \mathbb{P}_w(\mathcal{E})/2 = p(\delta,m,r,\eta,x)/2 >0. \label{minopeprimeprime}
\end{align}
Combining the definitions of $\mathcal{E}'$ and $\mathcal{E}''$ with \eqref{controltermsendsde} we get that, on $\mathcal{E}''$, we have 
\begin{align}
\forall i \in \{ c, N_0, S, M \}, \ \sup_{t \in [0,\delta]} |\mathcal{J}_i(t)| \leq \eta. \label{controlalltermssde}
\end{align}
Recall that, on the event $\mathcal{E}$ (that contains $\mathcal{E}''$), $(S,R,U)$ denotes the unique atom of $N_1$ and that $(S,R,U) \in [0,\delta]\times(r-\eta,r+\eta) \times (0,x-5\eta)$. By the definition of $(S,R,U)$ we see that, on $\mathcal{E}''$, we have $\mathcal{J}_{N_1}(S-)=0$. Combining with \eqref{xtermssde} and \eqref{controlalltermssde} we get that for any $w \in [x-\eta,x+\eta]$ we have $\mathbb{P}_w$-almost surely on $\mathcal{E}''$ that $x+5\eta \geq X_{S-} \geq x-5\eta > U$. Combining this with the fact that $R \in (r-\eta,r+\eta)$ we get that $\mathcal{J}_{N_1}(\delta)=R(1-X_{S-}) \in (r(1-x)-6\eta, r(1-x)+6\eta)$. Then using \eqref{xtermssde} and \eqref{controlalltermssde} again we get that, for any $w \in [x-\eta,x+\eta]$, we have $\mathbb{P}_w$-almost surely on $\mathcal{E}''$ that $X_{\delta} \in (x+r(1-x)-11\eta,x+r(1-x)+11\eta)$. Summarizing, we obtain 
\begin{align}
\inf_{w \in [x-\eta,x+\eta]} \mathbb{P}_w \left ( X_{\delta} \in (x+r(1-x)-11\eta,x+r(1-x)+11\eta) \right ) & \geq \inf_{w \in [x-\eta,x+\eta]} \mathbb{P}_w(\mathcal{E}'') >0, \label{minounifprobasde}
\end{align}
where the last bound come from \eqref{minopeprimeprime}. Since $x \in \mathcal{R}(X)$, we can choose $\eta$ arbitrary small and proceed as in the proof of Lemma \ref{fullsupportdiff} to obtain $x+r(1-x) \in \mathcal{R}(X)$. 

Now, set  $f^r_0(x)\coloneqq x+r(1-x)$. We have shown that, if $x \in \mathcal{R}(X)$, then $f^r_0(x) \in \mathcal{R}(X)$. Replacing $(0,x-5\eta)$ by $(x+5\eta,1)$ in the definition of $\mathcal{E}$, we obtain similarly that, if $x \in \mathcal{R}(X)$, then $f^r_1(x) \in \mathcal{R}(X)$, where $f^r_1(x)\coloneqq(1-r)x$. In other words, $\mathcal{R}(X)$ enjoys the following stability property: for any $x \in (0,1)$, 
\begin{align} \label{stabilityprop}
x \in \mathcal{R}(X) \Rightarrow \left\{
\begin{aligned}
& f^r_0(x) \in \mathcal{R}(X), \\
& f^r_1(x) \in \mathcal{R}(X). \end{aligned} \right. 
\end{align}
Let us denote by $\psi : [0,1] \rightarrow [-1/2,1,2]$ the bijection given by $\psi(x)\coloneqq x-1/2$. We define the transformations $F^r_0, F^r_1 : [-1/2,1,2] \rightarrow [-1/2,1,2]$ by $F^r_i(y) \coloneqq \psi \circ f^r_i \circ \psi^{-1}$ for $i \in \{0,1\}$ and we note that, for any $y \in [-1/2,1,2]$, $F_i(y)=y(1-r)+(-1)^ir/2$. Therefore, for any $n \geq 1$, $(i_k)_{0 \leq k \leq n-1} \in \{0,1\}^n$ and $x \in (0,1)$ we have 
\begin{align*}
\psi \circ f^r_{i_1} \circ \ldots \circ f^r_{i_n}(x) = F^r_{i_0} \circ \ldots \circ F^r_{i_{n-1}} \circ \psi(x) = \frac{r}{2} \sum_{k=0}^{n-1}(-1)^{i_k}(1-r)^{k}+\psi(x)(1-r)^n. 
\end{align*}
Using the stability property \eqref{stabilityprop}, the closedness of $\mathcal{R}(X)$, and that $\psi$ is an homeomorphism, we deduce that, if $\mathcal{R}(X) \cap (0,1) \neq \emptyset$ then for any $(i_k)_{k \geq 0} \in \{0,1\}^{\mathbb{N}}$ we have 
\begin{align}
\frac{r}{2} \sum_{k=0}^{\infty}(-1)^{i_k}(1-r)^{k} \in \psi(\mathcal{R}(X)). \label{sumofseriesonpsirzx}
\end{align}
To reach a target value $u \in [-1/2,1,2]$ as the sum of a series as in \eqref{sumofseriesonpsirzx}, we apply an upward and downward procedure that consists in adding positive terms to the series while we are below the target value, then adding negative terms while we are above the target value and so one. This procedure succeeds to converge to the target value if the absolute value of each term is smaller than the sum of the absolute values of the remaining terms. Since $r \in (0,1/2]$ (this is where this fact is crucially used) we have for any $j \geq 0$ that $(1-r)^j \leq \frac{1-r}{r}(1-r)^j = \sum_{k=j+1}^{\infty}(1-r)^{k}$. By the previous discussion, this allows, for any $u \in [-1/2,1,2]$, to choose $(i_k)_{k \geq 0} \in \{0,1\}^{\mathbb{N}}$ so that the left-hand side of \eqref{sumofseriesonpsirzx} equals $u$. We thus get $\psi(\mathcal{R}(X))=[-1/2,1,2]$ and therefore $\mathcal{R}(X)=[0,1]$, which concludes the proof. 
\end{proof}

\subsection{Optimality of the condition from Theorem \ref{fullsupport}} \label{optimalityofcondition}

In this subsection we consider measures $\Lambda\in\Ms_f([0,1])$ that do not satisfy the condition from Theorem \ref{fullsupport} and we build examples of $X$ solving \eqref{eq:SDEWFP} (with these $\Lambda$) with $\mathcal{R}(X) \cap (0,1) \neq \emptyset$ and $1/2 \notin \mathcal{R}(X)$. This will prove the optimality of the condition, as stated in Remark \ref{remarkoptimality}. 

Let $\Lambda\in\Ms_f([0,1])$ be a non-zero measure that does not satisfy the condition in Theorem \ref{fullsupport}. This implies the existence of $m\in(1/2,1)$ such that $\Lambda([0,m])=0$. We fix such an $m$ and set the other parameters in \eqref{eq:SDEWFP} as follows: $\mu \equiv 0$, $\theta=(0,0)$, $\sigma \equiv 0$, and we choose $\nu\in\Ms_f([-1,1])$ such that $\nu([-m,m])=0$, $\nu((m,1))>0$ and $\nu((-1,-m))>0$. Then \eqref{eq:SDEWFP} becomes 
\begin{align}
\dd X_t \  & = \int_{(m,1]\times [0,1]} \! \! \! r\big(\mathds{1}_{\{u\leq X_{t-}\}}(1-X_{t-})-\mathds{1}_{\{u>X_{t-}\}} X_{t-} \big)\tilde{N}(\dd t, \dd r,\dd u)\nonumber\\
& + \int_{(-1,-m)\cup(m,1)} \lvert r\rvert (\mathds{1}_{\{r\geq 0\}}(1-X_{t-})-\mathds{1}_{\{r< 0\}}X_{t-}) M(\dd t, \dd r). 
\label{eq:SDEcontreex}
\end{align}
With this parameter choice, the coefficient $C$ from \eqref{exporate} equals $\nu((m,1])+\nu([-1,-m))$ and is thus positive. We infer from Theorem \ref{thbounddiststa} that $X$ is $W_1$-uniformly ergodic on $[0,1]$ and, by Proposition \ref{suppofmuinftyisrecunif}, we deduce that $\mathcal{R}(X) \neq \emptyset$.

If $\mathcal{R}(X) \cap (0,1)=\emptyset$, then $\mathcal{R}(X) \subset \{0,1\}$. In this case, if $0 \in \mathcal{R}(X)$, then we choose $r \in (m,1) \cap Supp \ \nu$ (this is possible since $\nu((m,1))>0$; see section \ref{notations}) and, proceeding as in the proof of Lemma \ref{fullsupportjump}, one can show that $r=0+r(1-0) \in \mathcal{R}(X)$. If $1 \in \mathcal{R}(X)$, then we choose $r \in (-1,-m) \cap Supp \ \nu$ and get that $1+r=1-|r|\times 1 \in \mathcal{R}(X)$. In both cases, we get a contradiction with $\mathcal{R}(X) \subset \{0,1\}$. Therefore, $\mathcal{R}(X) \cap (0,1) \neq \emptyset$. 

Furthermore, from \eqref{eq:SDEcontreex}, we see that the transitions of the process $X$ occur at finite rate, and each transition is either of the form $X_t = X_{t-} + h(1-X_{t-})=h+X_{t-}(1-h)$ for some $h \in (m,1)$, or of the form $X_t = X_{t-} - hX_{t-}=(1-h)X_{t-}$ for some $h \in (m,1)$. Right after the first type of transition, we have almost surely $X_t>m$, and right after the second type of transition, we have almost surely $X_t<1-m$. In particular, if $T$ denotes the first transition time we have almost surely $X_t \in [0,1-m] \cup [m,1]$ for all $t \geq T$. In particular, $X$ never visits $(1-m,m)$ after time $T$. We conclude that $1/2 \notin \mathcal{R}(X)$ as desired. 

\begin{appendix}
\section{Construction of the Bernstein dual of a $\Lambda$-Wright--Fisher process} \label{constructiondual}

As announced in Section~\ref{sec:model}, we now define the Bernstein dual \( V \coloneqq (V_t)_{t \geq 0} \) of a solution $X$ to the SDE~\eqref{eq:SDEWFP}, where $\sigma = \sigma_s$ is given as in~\eqref{defselectionterm}. For our purposes, we present a construction that differs slightly - though is mathematically equivalent - from the one in~\cite{cordhumvech2024}. This alternative construction is chosen because it facilitates the derivation of a monotone coupling for the Bernstein dual (see Lemma~\ref{monotonecouplingASG} below), which is instrumental in establishing the open set recurrence property of $X$ in Theorem~\ref{thbounddiststa}.

Let $N_0$, $S_0$ and $M_0$ be Poisson random measures on, respectively, $(0,\infty) \times (0,1) \times (\{0,1\}^{\mathbb{N}})$, $(0,\infty) \times (-1,1) \times (\{0,1\}^{\mathbb{N}})$ and $(0,\infty) \times (-1,1) \times (\{0,1\}^{\mathbb{N}})$, with intensity measures respectively $\mathcal{M}_N$, $\mathcal{M}_S$ and $\mathcal{M}_M$, respectively, given by

\begin{align*}
\mathcal{M}_N(\dd s, \dd r, \dd z)&\coloneqq\dd s (\berno{r}^{\times \mathbb{N}}(\dd z)) r^{-2} \Lambda(\dd r), \\
\mathcal{M}_S(\dd s, \dd r, \dd z)&\coloneqq\dd s (\berno{r}^{\times \mathbb{N}}(\dd z)) |r|^{-1}\mu(\dd r), \\
\mathcal{M}_M(\dd s, \dd r, \dd z)&\coloneqq\dd s (\berno{r}^{\times \mathbb{N}}(\dd z)) |r|^{-1}\nu(\dd r), 
\end{align*}
where we write $\berno{r}$ for the distribution of a Bernoulli random variable with parameter $r$. We assume that $N_0$, $S_0$ and $M_0$ are mutually independent. If there are currently $n$ lines in the system, the following transitions occur.

\smallskip

\noindent (a) \emph{Coalescences (neutral events).} At any jump $(s,r,(z_i)_{i\geq 1}) \in N_0$ satisfying that $\sharp \{ i \in [n], \ z_i=1 \} \geq 2$, each line with an index $i$ such that $z_i=1$ is erased, except the one with smallest index (the \textit{surviving line}).\\
(b) \emph{Selective branching.} Every line independently splits into $\ell$ at rate~$\beta_\ell$, that is, there are $\ell-1$ additional lines.\\
(c) \emph{Coordinated branching.} At any jump $(s,r,(z_i)_{i\geq 1}) \in S_0$ such that $\sharp \{ i \in [n], \ z_i=1 \} \geq 1$, each line with an index $i$ such that $z_i=1$ splits into two lines, one continuing and one incoming. \\
(d) \emph{Single mutation.} For $c\in\{a,A\}$, every line, independently of each other, is subject to a type-$c$ mutation at rate $\theta_c$. The line ends in a circle; the circle is white for $c=a$ and black for $c=A$. \\
(e) \emph{Coordinated mutation.} At any jump $(s,r,(z_i)_{i\geq 1}) \in M_0$ such that $\sharp \{ i \in [n], \ z_i=1 \} \geq 1$, each line with an index $i$ such that $z_i=1$ ends in a circle; the circle is white if $r>0$ and black if $r<0$.\\

The branching-coalescing structure arising under the above dynamics will be referred to as the \emph{Ancestral selection graph} (ASG). It can be shown that the transitions of the ASG occur at finite rate, and that the corresponding transition rates coincide with those described in \cite[Sec. 3.2.1]{cordhumvech2024}. We denote by $L_r$ the number of lines in the ASG at time $r$. The process $(L_r)_{r \geq 0}$ is a Markov process with transition rates identical to those specified in \cite[Sec. 3.2.1]{cordhumvech2024}. 

Let $T>0$. For a given realization of the ASG on $[0,T]$, the individuals at time~$r=0$ from which lines originate, are referred to as \emph{sinks}, while the end lines at time~$r=T$ are called \emph{sources}. We assign types (either~$a$ or~$A$) to the sources of the ASG and propagate these types along the graph towards the sinks, following the \emph{propagation rules}. The first rule states that an individual may change its type only at a transition time of the ASG. The remaining rules are as follows.
\smallskip

\noindent (a) At a coalescent event, the types of the involved lines just before the event is the type of the surviving line just after the event.\\
(b) In an selective branching, if exactly $i$ of the $\ell$ lines resulting from the branching are of type $a$, the type of the splitting line just before the branching is $a$ with probability $p_i^{(\ell)}$ and~$A$ with probability $1-p_i^{(\ell)}$.\\
(c) In a coordinated branching associated to a jump $(s,r,(z_i)_{i\geq 1}) \in S_0$ with $r>0$ (resp. $r<0$), the type of each splitting line just before the branching is the type of the associated incoming line if the latter has type $a$ (resp. $A$) and, otherwise, it is the type of the continuing line. \\ 
(d)$\&$(e) The type of an individual after a white (resp. black) circle is $a$ (resp. $A$). 

\smallskip

Now, let $P_T(x)$ be the probability, conditionally on a given realization of the ASG, that all the sinks in the ASG are of type $a$ if each source in the ASG is of type $a$ or $A$ with probability $x$ or $1-x$, respectively. 
The latter is referred to in \cite{Cordero2022,cordhumvech2024} as the \emph{Ancestral Selection Polynomial} and it can be expressed as
$$P_T(x)=\sum_{i=0}^{L_T} V_T(i)\,\binom{L_T}{i} x^i(1-x)^{L_T-i}=H(x,V_T),\qquad x\in[0,1],$$
where $V_T(i)$ then represents the conditional probability given the realization of the ASG, that all the sinks in the ASG are of type $a$, if $i$ sources are of type $a$, and $L_T-i$ sources are of type $A$. The coefficient process $(V_r)_{r\geq 0}$ has a transition at each transition time of the ASG. We refer to \cite[Section 2.5]{Cordero2022} for the effect of (a) coalescences and (b) selective branchings, and to \cite[Sec. 3.2.2]{cordhumvech2024} for the effect of (c) coordinated selection and (d)\&(e) mutations. 

\smallskip
The above construction of the ASG enjoys the following restriction property. Let $n,m$ with $m \geq n \geq 1$. In the ASG starting with $m$ lines, colour in grey the last $m-n$ lines from time $r=0$, and all lines that arise from branchings of grey lines. If two grey (resp. non-grey) lines coalesce, we colour the resulting line to be grey (resp. non-grey). If a grey line coalesces with a non-grey line, we set the resulting line to be non-grey. Then the system of non-grey lines is a realization of the ASG starting with $n$ lines. This procedure yields a coupling of two realizations of the ASG starting with respectively $m$ and $n$ lines. Moreover, we can see from the propagation rules that, for the realization of the ASG starting with $n$ lines that is contained in the ASG starting with $m$ lines, the types are not influenced by the grey lines. In particular, conditionally on a realization of the above coupling, if each source is of type $a$ or $A$ with probability $x$ or $1-x$, the probability that all the sinks in the ASG starting with $m$ lines are of type $a$ is smaller than the probability that all the sinks in the ASG starting with $n$ lines are of type $a$. Consequently, we obtain the following the following result. 
\begin{lemme}[Monotone coupling of the Bernstein dual] \label{monotonecouplingASG}
Let $e_k \in \R^{k+1}$ be given by $e_k(i)\coloneqq \mathds{1}_{\{i=k\}}$. For any $n,m$ with $m \geq n \geq 1$, there is a process $(V^m_r,V^n_r)_{r\geq 0}$, where $(V^m_r)_{r\geq 0}$ and $(V^n_r)_{r\geq 0}$ are copies of the process $(V_r)_{r\geq 0}$ defined above, starting from $e_m$ and $e_n$, respectively, and we have almost surely $H(x,V^m_r) \leq H(x,V^n_r)$ for all $x\in[0,1]$ and $r\geq 0$. 
\end{lemme}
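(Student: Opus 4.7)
The plan is to construct the coupling explicitly via the grey/non-grey restriction procedure described in the paragraph preceding the lemma, and then to derive the inequality from a simple inclusion of events. Concretely, I would start from one realization of the ASG with $m$ initial lines driving $(V^m_r)_{r\geq 0}$ from $V^m_0=e_m$ through the propagation rules, then declare the last $m-n$ initial lines grey and propagate the colouring forward: branchings of a grey line produce grey offspring, pure-grey coalescences stay grey, and any coalescence involving at least one non-grey line is coloured non-grey.

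The first step is to verify that the non-grey subsystem is itself distributed as an ASG with $n$ initial lines, so that propagating types on it with initial coefficient vector $e_n$ yields a genuine copy $(V^n_r)_{r\geq 0}$ of the Bernstein dual. This rests on the fact that the ASG transitions are driven by the Poisson measures $N_0,S_0,M_0$ whose marks $(z_i)_{i\geq 1}$ are iid Bernoulli: taking the subsequence indexed by the (random) non-grey positions preserves the iid Bernoulli structure, hence the non-grey lines feel the correct ASG rates for $n$ lines.

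The central step will be to show that, after assigning iid $\berno{x}$ types to the sources of the $m$-line ASG and propagating them backward through the graph, the resulting types of the $n$ non-grey sinks agree with those obtained by an independent propagation on the non-grey subsystem with iid $\berno{x}$ types on its sources only. This must be checked transition by transition: at any coalescence, a non-grey line just before the event inherits the type of the non-grey surviving line just after, regardless of which grey lines participate; at selective, coordinated and mutation events, the type update along a non-grey line is a local rule involving only that line and its non-grey offspring. Therefore
\[
H(x, V^n_r) \;=\; \mathbb{P}\bigl(\text{all non-grey sinks are of type } a \,\big|\, \text{iid } \berno{x} \text{ types on the sources of the } m\text{-line ASG}\bigr).
\]

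Once this identity is in place, the inclusion $\{\text{all } m \text{ sinks are of type } a\}\subset\{\text{all non-grey sinks are of type } a\}$ immediately yields $H(x,V^m_r)\leq H(x,V^n_r)$ for each fixed $x\in[0,1]$ and $r\geq 0$; since both sides are polynomials in $x$ and the ASG has only countably many transition times almost surely, this extends to an almost sure inequality holding simultaneously in $x\in[0,1]$ and $r\geq 0$. The main obstacle will be the transition-by-transition verification of the type-propagation invariance at mixed grey/non-grey events (notably coalescences from $N_0$ and coordinated branchings from $S_0$), where one has to argue carefully that the backward-propagated type of each non-grey line is not influenced by the grey lines it interacts with; the remainder of the argument is essentially bookkeeping on the Poisson driving measures.
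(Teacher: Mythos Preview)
Your proposal is correct and follows exactly the same approach as the paper: the grey/non-grey colouring construction, the restriction property yielding a bona fide ASG on the non-grey lines, the observation that type propagation along non-grey lines is unaffected by grey lines, and the concluding event inclusion are precisely what the paper uses (the paper's argument is the paragraph immediately preceding the lemma statement rather than a separate proof). If anything, your outline is more explicit than the paper's about the points that need checking, in particular the transition-by-transition verification at mixed grey/non-grey events and the distributional correctness of the non-grey subsystem via the iid Bernoulli marks.
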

Let $\tau \coloneqq \inf \{ s \geq 0, L_s=0 \}$. If $(L_t)_{t\geq 0}$ is absorbed at $0$ in finite time almost surely then, for any $v \in \R^\infty$, $\P_{v}$-almost surely $\tau<\infty$ and, by Condition \ref{struct}, there is a random variable $U_\infty$ such that, for $t \geq \tau$, we have $\dim(V_t)=1$ and $V_t$ is the vector whose single entry is $U_\infty$. In particular, for any $x \in [0,1]$, $H(x,V_t)=U_\infty$ for all $t \geq \tau$. 
\begin{cor} \label{m2smallerthanm1}
If $(L_t)_{t\geq 0}$ is absorbed at $0$ in finite time almost surely, then $\E_{e_2}[ U_\infty]<\E_{e_1}[ U_\infty]$. 
\end{cor}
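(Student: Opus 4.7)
The plan is to combine the monotone coupling of Lemma~\ref{monotonecouplingASG} for $m=2$ and $n=1$ with the identification of a positive-probability event on which the two limits $U_\infty^1$ and $U_\infty^2$ differ.

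First I would take, on a common probability space, the coupled pair $(V_r^2, V_r^1)_{r\ge 0}$ given by Lemma~\ref{monotonecouplingASG}, so that $V_0^2=e_2$, $V_0^1=e_1$, and $H(x,V_r^2) \le H(x,V_r^1)$ for all $x \in [0,1]$ and $r \geq 0$ almost surely. Since $(L_t)_{t \ge 0}$ is absorbed at $0$ in finite time almost surely in each coordinate, the paragraph preceding the corollary provides random variables $U_\infty^1, U_\infty^2 \in [0,1]$ such that, for all sufficiently large $t$, $V_t^i$ is one-dimensional with single entry $U_\infty^i$. Evaluating the coupling inequality at such a time $t$, where both sides are constant in $x$, yields $U_\infty^2 \le U_\infty^1$ almost surely, and taking expectations gives the non-strict bound $\E_{e_2}[U_\infty] \le \E_{e_1}[U_\infty]$.

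To upgrade to a strict inequality, I would exhibit an event $E$ of positive probability on which $U_\infty^1 = 1$ and $U_\infty^2 = 0$. Label the unique non-grey starting line in the coupled $m=2$ ASG by $1$ and the grey starting line by $2$. Fix $T > 0$ and let $E$ be the event that (i) a single type-$a$ mutation caps line $1$ at some time in $(0,T/2)$, (ii) a single type-$A$ mutation caps line $2$ at some time in $(T/2,T)$, and (iii) no other Poisson event affects the ASG during $[0,T]$. The propagation rules then give, on $E$, that the non-grey sink inherits type $a$ (so $U_\infty^1 = 1$) and the grey sink inherits type $A$ (so the $m=2$ ASG has at least one sink of type $A$, forcing $U_\infty^2 = 0$). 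The independence of the driving Poisson measures $N_0$, $S_0$, $M_0$ and of the single-mutation clocks attached to each line, combined with assumption~\eqref{eq:bimut}, implies that each line can be capped by a white (resp. black) circle at a strictly positive rate, either through its independent single-mutation clock or through a coordinated-mutation atom whose Bernoulli coordinates select exactly that one line; a direct thinning argument then gives $\P(E) > 0$. Combining with $U_\infty^1 \ge U_\infty^2$ almost surely produces $\E_{e_1}[U_\infty]-\E_{e_2}[U_\infty] \ge \P(E) > 0$.

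The main obstacle is verifying $\P(E) > 0$ in the degenerate sub-cases where $\theta_a$ or $\theta_A$ vanishes and only coordinated mutations are available. In this regime one has to write the probability as a product of strictly positive integrals over the marks of $M_0$, choosing the time, the jump size $r$, and the Bernoulli selector $(z_i)_{i\ge 1}$ so that the atom caps only the targeted line. Once $\P(E)>0$ is secured, the remaining pieces—the monotone coupling and the propagation of types—are direct consequences of Lemma~\ref{monotonecouplingASG} and the ASG construction.
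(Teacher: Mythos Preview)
Your proposal is correct and follows essentially the same approach as the paper: both use the monotone coupling of Lemma~\ref{monotonecouplingASG} to get $U_\infty^2 \le U_\infty^1$ almost surely, and then exhibit the positive-probability event in which the non-grey line is capped by a type-$a$ mutation and the grey line by a type-$A$ mutation (invoking~\eqref{eq:bimut}) to force $U_\infty^1=1$ and $U_\infty^2=0$. Your treatment is slightly more explicit about the coordinated-mutation sub-case, but the argument is otherwise the same.
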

\begin{proof}
 Let us fix $x\in [0,1]$ and consider the coupling $(V^2_r,V^1_r)_{r\geq 0}$ from Lemma \ref{monotonecouplingASG}. This coupling is obtained by a realization of the ASG that contains two lines at time $r=0$, one grey and one non-grey. We see that there is a random couple $(U^2_\infty,U^1_\infty)$ such that, almost surely, we have $H(x,V^2_r)=U^2_\infty$ and $H(x,V^1_r)=U^1_\infty$ for all $r$ sufficiently large. By Lemma \ref{monotonecouplingASG} we have almost surely $U^2_\infty \leq U^1_\infty$. By construction we see that, on the event where the first two transitions are a mutation to type $a$ of the non-grey line and a mutation to type $A$ of the grey line, and where these two event occur on the time interval $[0,1]$ (this event has positive probability because of the assumption \eqref{eq:bimut}), we have $H(x,V^2_r)=0$ and $H(x,V^1_r)=1$ for all $r\geq 1$, so in particular $U^2_\infty=0<1=U^1_\infty$. The result follows
\end{proof}

\section{Some facts about L\'evy processes} \label{factslp}

The following is a well-known result about L\'evy processes. It shows that if a L\'evy process satisfies certain integrability conditions, then its supremum has finite exponential moments. 

\begin{lemme} \label{expomomentsup}
Let $X$ be a real-valued L\'evy process with Laplace exponent $\psi_X$. If there is $\lambda_0 > 0$ in the domain of the Laplace transform of $X$ such that $\psi_X(\lambda_0)<0$, then $X$ drifts to $-\infty$ (thus, $\sup_{t \in [0,\infty)} X_t$ is well-defined and finite) and, for any $\lambda \in (0,\lambda_0)$, we have $\mathbb{E}[e^{\lambda \sup_{t \in [0,\infty)} X_t}]<\infty$. 
\end{lemme}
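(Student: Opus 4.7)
The plan is to exploit convexity of the Laplace exponent $\psi_X$ together with a simple supermartingale argument. Since $\psi_X$ is convex with $\psi_X(0)=0$ and $\psi_X(\lambda_0)<0$, convexity yields $\psi_X(\mu) \leq (\mu/\lambda_0)\,\psi_X(\lambda_0) < 0$ for every $\mu \in (0, \lambda_0)$. This strict negativity on the whole open interval is the key input and should be derived first.

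For the drift to $-\infty$, I would fix any $\mu \in (0,\lambda_0)$ and observe that $M_t\coloneqq e^{\mu X_t}$ is a nonnegative supermartingale: conditionally on $\mathcal{F}_s$, $\mathbb{E}[M_t\mid\mathcal{F}_s] = M_s\, e^{(t-s)\psi_X(\mu)} \leq M_s$. By the martingale convergence theorem, $M_t$ converges almost surely to some $M_\infty \geq 0$, while $\mathbb{E}[M_t] = e^{t\psi_X(\mu)} \to 0$. Fatou's lemma then forces $M_\infty = 0$ almost surely, which gives $X_t \to -\infty$ almost surely, and hence $\sup_{t \geq 0} X_t$ is finite almost surely.

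For the exponential integrability, fix $\lambda \in (0,\lambda_0)$ and pick any $\mu \in (\lambda, \lambda_0)$ (the gap will be essential). Applying Doob's maximal inequality to the nonnegative supermartingale $(e^{\mu X_s})_{s\in [0,t]}$ yields $\mathbb{P}(\sup_{0 \leq s \leq t} X_s \geq y) \leq e^{-\mu y}$ for every $y > 0$, and letting $t \to \infty$ by monotone convergence extends this to $\mathbb{P}(\sup_{s \geq 0} X_s \geq y) \leq e^{-\mu y}$. The layer-cake identity $\mathbb{E}[e^{\lambda Z}] = 1 + \lambda \int_0^\infty e^{\lambda y}\, \mathbb{P}(Z > y)\,\dd y$ applied to $Z \coloneqq \sup_{s \geq 0} X_s \vee 0 \geq 0$ then produces the bound $1 + \lambda \int_0^\infty e^{(\lambda - \mu) y}\,\dd y = 1 + \lambda/(\mu - \lambda) < \infty$.

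The main subtlety is precisely the need to take $\mu$ strictly larger than $\lambda$: a tail bound at the same rate $\lambda$ would only be borderline for the finiteness of $\mathbb{E}[e^{\lambda \sup X_s}]$. The strict negativity of $\psi_X$ on the whole interval $(0,\lambda_0)$, obtained at the start from convexity, supplies exactly the necessary slack. Aside from this point, the argument is standard and no other serious obstacle is expected.
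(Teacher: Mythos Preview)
Your proof is correct and takes a genuinely different, more elementary route than the paper. The paper establishes drift to $-\infty$ via Rogozin's criterion (bounding $\int_1^\infty t^{-1}\mathbb{P}(X_t\geq 0)\,\dd t$ using $\mathbb{P}(X_t\geq 0)\leq e^{t\psi_X(\lambda_0)}$), and then handles the exponential moment of $\sup_{t\geq 0} X_t$ by invoking its infinitely divisible structure: it identifies the L\'evy measure of the supremum (Sato, Thm.~48.1) and checks the criterion $\int_{(1,\infty)} e^{\lambda x}\nu(\dd x)<\infty$ (Sato, Thm.~25.3) via H\"older's inequality. Your approach avoids both of these heavier ingredients: the supermartingale convergence argument together with Fatou directly gives $X_t\to-\infty$, and Doob's maximal inequality for nonnegative supermartingales yields the exponential tail bound $\mathbb{P}(\sup_{s\geq 0} X_s\geq y)\leq e^{-\mu y}$, from which the layer-cake formula finishes. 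The key observation that convexity propagates $\psi_X(\lambda_0)<0$ to all of $(0,\lambda_0)$, giving you the slack $\mu>\lambda$, is exactly what makes your argument go through cleanly. Your proof is shorter and self-contained; the paper's proof, by contrast, reveals more structure (the L\'evy measure of the supremum) but at the cost of citing several external results.
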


\begin{proof}
For the first part, note that, for any $t \geq 1$, we have $$\mathbb{P}(X_t \geq 0) \leq \mathbb{E}[e^{\lambda_0 X_t}] = e^{t \psi_X(\lambda_0)}.$$ 
Hence, $$\int_1^{\infty} t^{-1}\mathbb{P}(X_t \geq 0) \dd t<\infty,$$which entails that $X$ drifts to $-\infty$ by Rogozin's criterion (see for example \cite[Thm. VI.12]{Bertoin}). Moreover, we know from \cite[Thm. 48.1]{Sato} that $\sup_{t \in [0,\infty)} X_t$ is an infinitely divisible random variable with L\'evy measure $\nu(\dd x)$ given by $\nu(\dd x) \coloneqq \mathds{1}_{x \geq 0} \int_0^{\infty} t^{-1}\mathbb{P}(X_t \in \dd x) \dd t$. Let us fix $\lambda \in (0,\lambda_0)$. Thanks to \cite[Thm. 25.3]{Sato}, the result will follow if we prove that $\int_{(1,\infty)} e^{\lambda x} \nu(\dd x) < \infty$. Using the expression of $\nu$, Holder's inequality, the definition of the Laplace exponent $\psi_X$, \cite[Lem. 30.3]{Sato}, and the fact that $\psi_X(\lambda_0)<0$, we get 
\begin{align*}
\int_{(1,\infty)} e^{\lambda x} \nu(\dd x) & = \int_0^{\infty}\!\! t^{-1}\mathbb{E}[e^{\lambda X_t} \mathds{1}_{\{X_t >1\}}] \dd t \leq \int_0^{\infty} t^{-1}(\mathbb{E}[e^{\lambda_0 X_t}])^{\frac{\lambda}{\lambda_0}} \times (\mathbb{P}(X_t >1))^{\frac{\lambda_0-\lambda}{\lambda_0}} \dd t \\
& \leq \int_0^{\infty} t^{-1}e^{t \lambda \psi_X(\lambda_0)/\lambda_0} (C t)^{\frac{\lambda_0-\lambda}{\lambda_0}} \dd t = C^{\frac{\lambda_0-\lambda}{\lambda_0}} \int_0^{\infty} t^{-\frac{\lambda}{\lambda_0}}e^{t \lambda \psi_X(\lambda_0)/\lambda_0} \dd t < \infty. 
\end{align*}
In the above, $C$ is the constant provided by \cite[Lem. 30.3]{Sato}. This concludes the proof. 
\end{proof}

\begin{lemme} \label{controlmomentslp}
Let $X$ be a real-valued L\'evy process. If there is $\alpha > 1$ such that $\mathbb{E}[|X_1|^{\alpha}]<\infty$, then there is a constant $C>0$ such that, for any $t\geq 0$, we have $$\mathbb{E}[|X_t|^{\alpha}]<C (\mathds{1}_{\{t\in [0,1)\}} + t^{\alpha} \mathds{1}_{\{t\geq 1\}}).$$
\end{lemme}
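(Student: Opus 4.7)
The plan is to split the argument into the two regimes $t \in [0,1)$ and $t \geq 1$, handling the former with a martingale maximal inequality and the latter with the Minkowski inequality applied to integer-spaced increments.

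For the short-time regime, first observe that since $\alpha > 1$, Jensen's inequality yields $\mathbb{E}[|X_1|] \leq \mathbb{E}[|X_1|^\alpha]^{1/\alpha} < \infty$, so $m \coloneqq \mathbb{E}[X_1]$ is well-defined. By stationarity and independence of increments, $M_t \coloneqq X_t - tm$ is a càdlàg martingale with respect to the natural filtration of $X$. Applying Doob's $L^\alpha$-maximal inequality to $M$ on $[0,1]$ gives
$$\mathbb{E}\Big[\sup_{s\in[0,1]} |M_s|^\alpha \Big] \leq \Big(\frac{\alpha}{\alpha-1}\Big)^\alpha \mathbb{E}[|X_1-m|^\alpha] < \infty.$$
Combining with $|X_s| \leq |M_s| + s|m|$ for $s \in [0,1]$, we obtain a finite constant $C_1 > 0$ such that $\mathbb{E}[|X_s|^\alpha] \leq C_1$ for all $s \in [0,1]$. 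This handles the first indicator.

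For the long-time regime, fix $t \geq 1$ and let $n \coloneqq \lfloor t \rfloor \geq 1$. Write the telescopic decomposition
$$X_t = \sum_{i=1}^{n}(X_i - X_{i-1}) + (X_t - X_n),$$
where the increments $X_i - X_{i-1}$ are i.i.d. copies of $X_1$ and $X_t - X_n \overset{\mathcal{L}}{=} X_{t-n}$ with $t-n \in [0,1)$. Minkowski's inequality in $L^\alpha$ then yields
$$\mathbb{E}[|X_t|^\alpha]^{1/\alpha} \leq n\, \mathbb{E}[|X_1|^\alpha]^{1/\alpha} + \mathbb{E}[|X_{t-n}|^\alpha]^{1/\alpha} \leq n \,\mathbb{E}[|X_1|^\alpha]^{1/\alpha} + C_1^{1/\alpha}.$$
Since $n \leq t$ and $t \geq 1$, the right-hand side is bounded by $C_2 \,t$ for a constant $C_2$ independent of $t$, so $\mathbb{E}[|X_t|^\alpha] \leq C_2^{\alpha} t^{\alpha}$. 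Taking $C \coloneqq \max\{C_1, C_2^{\alpha}\}$ yields the stated bound in both regimes.

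No significant obstacle is expected: the only delicate points are the verification that $X_t - tm$ is a genuine martingale (standard from independence and stationarity of increments together with $\mathbb{E}[|X_1|] < \infty$) and the applicability of Doob's $L^\alpha$-inequality to càdlàg martingales, which is classical.
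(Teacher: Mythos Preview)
Your proof is correct and follows essentially the same outline as the paper: bound $\mathbb{E}[|X_s|^\alpha]$ uniformly on $[0,1]$, then telescope over integer increments and use convexity (you phrase it as Minkowski, the paper as Jensen applied to an average; these are equivalent here). The one genuine difference is in the short-time step: the paper invokes \cite[Thm.~25.18]{Sato} to get $\mathbb{E}[\sup_{s\in[0,1]}|X_s|^\alpha]<\infty$ directly, whereas you compensate $X$ into a martingale and apply Doob's $L^\alpha$-maximal inequality. Your route is more self-contained and avoids an external citation, at the mild cost of a couple of extra lines; the paper's is terser but relies on a result that itself encodes a similar argument. Either way the substance is the same.
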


\begin{proof}
Let us fix $t \geq 0$ and let $n\coloneqq\lfloor t \rfloor$. Using the triangular inequality and Jensen's inequality, we get 
\begin{align*}
|X_t|^{\alpha} & \leq (n+1)^{\alpha} \left ( \frac{|X_1| + |X_2 - X_1| + \ldots + |X_n - X_{n-1}| + |X_t - X_n|}{n+1} \right )^{\alpha} \\
& \leq (n+1)^{\alpha-1} \left ( |X_1|^{\alpha} + |X_2 - X_1|^{\alpha} + \ldots + |X_n - X_{n-1}|^{\alpha} + |X_t - X_n|^{\alpha} \right ). 
\end{align*}
Since the increments of $X$ are independent and stationary, we obtain 
\begin{align*}
\mathbb{E}[|X_t|^{\alpha}] & \leq (n+1)^{\alpha-1} \left ( n \mathbb{E}[|X_1|^{\alpha}] + \mathbb{E}[|X_{t-n}|^{\alpha}] \right ) \\
& \leq (n+1)^{\alpha} \sup_{s \in [0,1]} \mathbb{E}[|X_s|^{\alpha}] \leq (n+1)^{\alpha} \mathbb{E} \bigg[ \sup_{s \in [0,1]} |X_s|^{\alpha} \bigg]. 
\end{align*}
Applying \cite[Thm. 25.18]{Sato} to the function $g(x)\coloneqq(|x| \vee 1)^{\alpha}$, we get that the assumption $\mathbb{E}[|X_1|^{\alpha}|<\infty$ implies the finiteness of $\mathbb{E} [ \sup_{s \in [0,1]} |X_s|^{\alpha} ]$. Then, we have $n+1 \leq 2t$ when $t \geq 1$ and $n+1=1$ when $t \in [0,1)$. The result follows. 
\end{proof}

\begin{lemme} \label{polmomentvaluesup}
Let $X$ be a real-valued L\'evy process such that $\mathbb{E}[X_1]<0$ (in this case $X$ drifts to $-\infty$, and thus, $\sup_{t\in[0,\infty)} X_t$ is well-defined and finite). Let $\Pi_X$ denote the L\'evy measure of $X$. If there is $m > 1$ such that $\int_{\mathbb{R}\setminus[-1,1]}|x|^{m}\Pi_X(\dd x)<\infty$, then for any $r \in (0,m(m-1)/(2m-1))$, we have $\mathbb{E}[(\sup_{[0,\infty)} X)^{r}]<\infty$. 
\end{lemme}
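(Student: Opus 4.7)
The plan is to establish a polynomial tail bound $\Pb(M > x) \leq C x^{-m(m-1)/(2m-1)}$ for $M \coloneqq \sup_{t \geq 0} X_t$ and sufficiently large $x$, from which $\Eb[M^r] = r\int_0^\infty x^{r-1} \Pb(M > x) \dd x < \infty$ follows for every $r < m(m-1)/(2m-1)$.

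First I would set $\mu \coloneqq -\Eb[X_1] > 0$ and work with the centered L\'evy martingale $Y_t \coloneqq X_t + \mu t$. The hypothesis $\int_{|r|>1} |r|^m \Pi_X(\dd r) < \infty$, together with the existence of $\Eb[X_1]$, implies $\Eb[|X_1|^m]<\infty$ (via \cite[Thm.~25.3]{Sato}), and consequently $\Eb[|Y_1|^m]<\infty$; thus Lemma \ref{controlmomentslp} applies to both $X$ and $Y$. For a threshold $T = T(x) \geq 1$ to be chosen, I would split
\[
\Pb(M > x) \leq \Pb\left(\sup_{t \leq T} X_t > x\right) + \Pb\left(\sup_{t \geq T} X_t > x\right)
\]
and treat each piece separately. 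The first is handled by Doob's $L^m$-maximal inequality applied to the submartingale $|Y_t|^m$, using $X_t \leq Y_t$ together with Lemma \ref{controlmomentslp}, yielding $\Pb(\sup_{t \leq T} X_t > x) \leq C T^m/x^m$. For the second, the key observation is that $X_T$ concentrates near its mean $-\mu T$, so exceeding level $x$ after time $T$ requires an atypical upward excursion of $Y$. By the Markov property at $T$ one has $\sup_{t \geq T} X_t \stackrel{d}{=} X_T + M'$ with $M'$ an independent copy of $M$; splitting on $\{X_T \leq -\mu T/2\}$ against its complement reduces matters to controlling $\Pb(Y_T > \mu T/2)$, for which I would aim at an estimate of order $T^{-(m-1)}$.

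Balancing the two contributions via the choice $T = x^{m/(2m-1)}$ turns both $T^m/x^m$ and $T^{-(m-1)}$ into $x^{-m(m-1)/(2m-1)}$, exactly the target rate, and integration against $r x^{r-1}$ then yields the claim. The main difficulty lies in the sharp deviation bound $\Pb(|Y_T| > \mu T/2) \leq C T^{-(m-1)}$: the crude estimate $\Eb[|Y_T|^m] \leq C T^m$ supplied by Lemma \ref{controlmomentslp} is too loose for a centered process and only produces a $T$-independent constant through Markov's inequality. To get the decay one either invokes a Marcinkiewicz--Zygmund / von Bahr--Esseen refinement, namely $\Eb[|Y_T|^m] \leq C T^{\max(1, m/2)}$, a consequence of the martingale structure of $Y$ combined with the $m$-th moment assumption; or one bootstraps, noting that any preliminary power-law tail $\Pb(M > x) \leq C x^{-\alpha_0}$ combined with the Doob term improves, after reoptimizing in $T$, to the exponent $\alpha_1 = m\alpha_0/(m+\alpha_0)$, so that starting from the classical ladder-height bound $\alpha_0 = m-1$ (derivable from the compound-geometric representation $M = \sum_{i=1}^N U_i$) immediately delivers $\alpha = m(m-1)/(2m-1)$.
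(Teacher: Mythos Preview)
Your approach is genuinely different from the paper's and, as written, has a gap that undermines both proposed routes.

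\textbf{The bootstrap is backwards.} The map $\alpha_0 \mapsto \alpha_1 = m\alpha_0/(m+\alpha_0)$ satisfies $\alpha_1 < \alpha_0$ for every $\alpha_0 > 0$, so this is a \emph{deterioration}, not an improvement. In particular, if you already have the Kiefer--Wolfowitz-type ladder-height bound $\alpha_0 = m-1$ for L\'evy processes, then the lemma is immediate without any splitting, since $m(m-1)/(2m-1) < m-1$. Your entire time-splitting apparatus then becomes decorative. (The L\'evy analogue of Kiefer--Wolfowitz does hold, but it is a nontrivial result you would need to cite and justify; you cannot derive it from the compound-geometric picture in one line.)

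\textbf{The MZ/Doob route does not close.} Even granting the deviation estimate $\Pb(Y_T > \mu T/2) \leq CT^{-(m-1)}$ --- which, incidentally, does \emph{not} follow from $\Eb[|Y_T|^m] \leq CT^{m/2}$ when $m>2$; you would need Fuk--Nagaev or a truncation argument --- you still face the residual term $\Pb(M' > x + \mu T/2)$ coming from the Markov property at $T$. With your choice $T = x^{m/(2m-1)}$ one has $\mu T/2 \ll x$, so this term is essentially $\Pb(M > x)$ again and the inequality is circular. Iterating it produces a telescoping sum whose control degrades the exponent; it does not stabilise at $m(m-1)/(2m-1)$.

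\textbf{What the paper does instead.} The paper avoids the recursion entirely by invoking the infinite divisibility of $M=\sup_{t\geq 0} X_t$ (Sato, Thm.~48.1): its L\'evy measure is $\nu(\dd x)=\mathds{1}_{\{x\geq 0\}}\int_0^\infty t^{-1}\Pb(X_t\in\dd x)\,\dd t$, so by Sato, Thm.~25.3, it suffices to show $\int_{(1,\infty)} x^r\,\nu(\dd x)<\infty$. This integral is $\int_0^\infty t^{-1}\Eb[X_t^r\mathds{1}_{\{X_t\geq 1\}}]\,\dd t$, which the paper bounds via H\"older, pairing $\Eb[|X_t|^{pr}]^{1/p}$ (controlled by Lemma~\ref{controlmomentslp}) against $\Pb(X_t\geq 1)^{1/q}$ (controlled by a cited deviation bound $\Pb(X_t\geq 1)\leq Ct^{-\gamma}$ for $\gamma<m-1$). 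Choosing $p,q$ appropriately makes the $t$-integral converge precisely when $r < m(m-1)/(2m-1)$. The key advantage is that integrating over \emph{all} $t$ sidesteps any self-referential use of the tail of $M$.
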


\begin{proof}
By \cite[Thm. 48.1]{Sato}, $\sup_{t\in[0,\infty)} X_t$ is an infinitely divisible random variable with L\'evy measure $$\nu(\dd x) \coloneqq \mathds{1}_{x \geq 0} \int_0^{\infty} t^{-1}\mathbb{P}(X_t \in \dd x) \dd t.$$ 
Let us fix $r \in (0,m(m-1)/(2m-1))$. According to \cite[Thm. 25.3]{Sato}, we only need to prove that $\int_{(1,\infty)} x^r \nu(\dd x) < \infty$. 

Let $\gamma \coloneqq r(2m-1)/m$ and note that $\gamma\in (0,m-1)$. When $t\geq 1$ we apply \cite[Lem. C.2]{cordhumvech2024} and when $t\in [0,1)$ we apply \cite[Lem. 30.3]{Sato}. Altogether we obtain the existence of a constant $C_1>0$ such that 
\begin{align}
\mathbb{P} (X_t \geq 1) \leq C_1 \left ( t \mathds{1}_{\{t\in [0,1)\}} + t^{-\gamma} \mathds{1}_{\{t\geq 1\}} \right ). \label{polmomentsup1}
\end{align}
Now choose $p,q$ such that 
$$p \in (1,m /r),\quad q \in (1,2-m^{-1}),\quad\text{and}\quad 1/p+1/q=1.$$ Such a choice of $p$ and $q$ is possible since $$\frac{r}{m}+ \frac{1}{2-m^{-1}} < \frac{m-1}{2m-1} + \frac{m}{2m-1} =1.$$ By \cite[Thm. 25.3]{Sato}, the assumption $\int_{\mathbb{R}\setminus[-1,1]}|x|^{m}\Pi_X(\dd x)<\infty$ implies $\mathbb{E}[|X_1|^{m}|<\infty$ and, since $pr<m$, we have $\mathbb{E}[|X_1|^{pr}|<\infty$. Using Holder's inequality for our choice of $p$ and $q$, Lemma \ref{controlmomentslp} with $\alpha=pr$ (let $C_2$ be the constant provided by that lemma), and \eqref{polmomentsup1} we get 
\begin{align*}
\mathbb{E} \left [ X_t^r \mathds{1}_{\{X_t \geq 1\}} \right ] \leq \mathbb{E} \left [ |X_t|^{pr} \right ]^{1/p} \mathbb{P} (X_t \geq 1)^{1/q} \leq C_1^{1/q} C_2^{1/p} \left ( t^{1/q} \mathds{1}_{\{t\in [0,1)\}} + t^{r-\gamma/q} \mathds{1}_{\{t\geq 1\}} \right ). 
\end{align*}
Therefore, 
\begin{align*}
\int_1^{\infty}\!\!\!\! x^r \nu(\dd x) = \int_0^{\infty} \frac{\mathbb{E} \left [ X_t^r \mathds{1}_{\{X_t \geq 1\}} \right ] }{t}\dd t \leq C_3  \left ( \int_0^1\!\! t^{-1+1/q} \dd t + \int_1^{\infty}\!\!\! t^{-1+r-\gamma/q} \dd t \right ) < \infty. 
\end{align*}
with $C_3=C_1^{1/q} C_2^{1/p}$. The finiteness comes from the fact that $q < 2-m^{-1} = \gamma/r$ which implies $r-\gamma/q<0$. 
\end{proof}

\section{Classical facts about Radon distance and Wasserstein distance} \label{classicalfacts}

The space of continuous functions $f : [0,1] \rightarrow \mathbb{R}$ is denoted by $\mathcal{C}([0,1])$. The subspace of $\mathcal{C}([0,1])$ consisting of $1$-Lipschitz functions is denoted by $\mathcal{L}_1([0,1])$. We denote by $d_{Rad} (\cdot,\cdot)$ the Radon distance on probability measures on $[0,1]$. More precisely, for two probability measures $\nu_1, \nu_2$ on $[0,1]$, 
\begin{align}
d_{Rad} (\nu_1, \nu_2)\coloneqq \sup_{f \in \mathcal{C}([0,1]), ||f||_{\infty} \leq 1} \left | \int_{[0,1]} f(z) \nu_1(\dd z) - \int_{[0,1]} f(z) \nu_2(\dd z) \right |. \label{defradondist}
\end{align}
We also denote by $W_p (\cdot,\cdot)$ the Wasserstein $p$-distance on probability measures on $[0,1]$. More precisely, for two probability measures $\nu_1, \nu_2$ on $[0,1]$, 
\begin{align}
W_p (\nu_1, \nu_2)\coloneqq \left ( \inf_{\pi \in \Pi(\nu_1,\nu_2)} \int_{[0,1]} |z_1-z_2|^p \pi(\dd z_1,\dd z_2) \right )^{1/p}, \label{defwassdist}
\end{align}
where $\Pi(\nu_1,\nu_2)$ is the set of probability measures on $[0,1]^2$ whose marginals are given by $\nu_1$ and $\nu_2$. We recall the classical inequality 
\begin{align}
d_{LP}(\nu_1,\nu_2)^2 \leq W_p(\nu_1,\nu_2)^p, \label{classicalLP}
\end{align}
where $d_{LP}$ denotes the L\'evy-Prokhorov metric on probability measures on $[0,1]$. 

Since we are considering measures on $[0,1]$ which is bounded, Kantorovich and Rubinstein's theorem provides the following dual representation for $W_1 (\cdot,\cdot)$: 
\begin{align}
W_1 (\nu_1, \nu_2) = \sup_{f \in \mathcal{L}_1([0,1])} \left | \int_{[0,1]} f(z) \nu_1(dz) - \int_{[0,1]} f(z) \nu_2(\dd z) \right |. \label{kantwassdist}
\end{align}
From \eqref{kantwassdist} and \eqref{defradondist} it is not difficult to see that we have the following classical inequality between Radon distance and Wasserstein distance: 
\begin{align}
2 W_1 (\nu_1, \nu_2) \leq d_{Rad} (\nu_1, \nu_2). \label{inegradonwass}
\end{align}
In particular, \eqref{classicalLP} and \eqref{inegradonwass} imply that $W_p$ and $d_{Rad}$ are stronger than the L\'evy-Prokhorov metric $d_{LP}$. 

\end{appendix}

\subsection*{Acknowledgment}
The authors would like to thank Sebastian Hummel for insightful discussions at an early stage of the project.

\subsection*{Funding}
Fernando Cordero was funded by the Deutsche Forschungsgemeinschaft (DFG, German Research Foundation) --- Project-ID 317210226 --- SFB 1283. Gr\'egoire V\'echambre was funded by Beijing Natural Science Foundation, project number IS24067. 

\addtocontents{toc}{\protect\setcounter{tocdepth}{2}}
\bibliographystyle{abbrvnat}
\bibliography{reference2}

\end{document}